\documentclass[10pt]{article}

\usepackage{authblk}

\usepackage{geometry}
\usepackage{graphicx} 
\usepackage{amsmath}
\usepackage{amssymb}
\usepackage{amsthm}
\usepackage{mathrsfs}
\usepackage{enumitem}
\usepackage{tikz}
\usepackage{color}
\usepackage[colorlinks=true,urlcolor=blue,
citecolor=red,linkcolor=blue,linktocpage,pdfpagelabels,bookmarksnumbered,bookmarksopen]{hyperref}
\usepackage{dirtytalk}
\usepackage[hang,flushmargin,stable]{footmisc} 
\usepackage{dsfont}

\usepackage{ulem}

\numberwithin{equation}{section}

\newtheorem{theorem}{Theorem}[section]
\newtheorem{corollary}[theorem]{Corollary}
\newtheorem{lemma}[theorem]{Lemma}
\newtheorem{proposition}[theorem]{Proposition}
\newtheorem*{theorem*}{Theorem}
\newtheorem*{lemma*}{Lemma}
\newtheorem*{definition*}{Definition}
\theoremstyle{definition}
\newtheorem{example}{Example}
\newtheorem{remark}{Remark}
\newtheorem{definition}[theorem]{Definition}

\newtheorem{assumption}[theorem]{Assumption}
\newtheorem*{ansatz}{Ansatz}

\renewcommand\refname{Bibliography}

\makeatletter

\newcommand\backmatter{%
  \if@openright
    \cleardoublepage
  \else
    \clearpage
  \fi
   }

\makeatother

\newcommand{\gt}{{\widetilde g}}
\newcommand{\Ht}{{\widetilde H}}

\newcommand{\wt}{{\widetilde w}}

\newcommand{\Xt}{{\widetilde X}}

\newcommand{\Lt}{{\widetilde L}}

\newcommand{\mb}{{\overline{m}}}
\newcommand{\alb}{{\overline{\a}}}
\newcommand{\ab}{{\overline{a}}}
\newcommand{\Ab}{{\overline{A}}}
\newcommand{\xb}{{\overline{x}}}
\newcommand{\Xb}{{\overline{X}}}
\newcommand{\Yb}{{\overline{Y}}}

\newcommand{\Xh}{{\widehat{X}}}
\newcommand{\Yh}{{\widehat{Y}}}
\newcommand{\Ah}{{\widehat{A}}}

\newcommand{\Lip}{\textnormal{Lip}}

\def\a{\alpha}
\def\b{\beta}
\def\d{\delta}
\def\D{\Delta}
\def\g{\gamma}
\def\k{\kappa}
\def\l{\lambda}

\def\m{\mu}

\def\na{\nabla}
\def\s{\sigma}

\def\th{\theta}
\def\t{\tau}
\def\e{\varepsilon}
\def\Om{\Omega}

\def\i{\infty}

\newcommand{\R}{{\mathbb R}}

\newcommand{\N}{\mathbb{N}}

\newcommand{\Rd}{{\mathbb R^d}}

\newcommand{\E}{{\mathbb E}}
\newcommand{\PR}{{\mathbb P}}

\newcommand{\A}{\mathcal{A}}

\newcounter{hyp}
\newcommand\norm[1]{\lVert#1\rVert}

\newcommand{\p}{\partial}

\newcommand{\spt}{{\rm{spt}}}

\renewcommand{\ae}{{\rm{a.e.}}}

\newcommand{\op}{{\rm{op}}}

\newcommand{\sF}{\mathscr{F}}
\newcommand{\sL}{\mathscr{L}}

\newcommand{\prob}{{\mathscr P}}

\newcommand{\Leb}{\mathcal{L}}

\begin{document}

\title{Distributed Equilibria for $N$-Player Differential Games with Interaction through Controls: Existence, Uniqueness and Large $N$ Limit}

\date{\today}

\author[1]{Hei Jie Lam \& Alp\'ar R. M\'esz\'aros \thanks{Department of Mathematical Sciences, Durham University, Durham, DH1 3LE, UK.\\ Email addresses: hei.j.lam@durham.ac.uk \& alpar.r.meszaros@durham.ac.uk}}

\renewcommand\refname{Bibliography}

\maketitle

\begin{abstract}
    We establish the existence and uniqueness of distributed equilibria to possibly nonsymmetric $N$-player differential games with interactions through controls under displacement semimonotonicity assumptions. Surprisingly, the nonseparable framework of the running cost combined with the character of distributed equilibria leads to a set of consistency/fixed point relations different in nature from the ones for open- and closed-loop equilibria investigated in the recent work of Jackson and the second author in \cite{jackson2025quantitativeconvergencedisplacementmonotone}. In the symmetric setting, we establish quantitative convergence results for the $N$-player games toward the corresponding Mean Field Games of Controls (MFGC), when $N\to+\infty$. Our approach applies to both idiosyncratic noise–driven models and fully deterministic frameworks. In particular, for deterministic models distributed equilibria correspond to open-loop equilibria, and our work seems to be the first in the literature to provide existence and uniqueness of these equilibria and prove the large $N$ convergence in the MFGC setting. The sharpness of the imposed assumptions is discussed via a set of explicitly computable examples in the linear quadratic setting.
\end{abstract}

\section{Introduction}

\medskip

\subsection*{Formulation of the $N$-player games.} 

Fix $d,N\in\N,\ T>0,\ \b\geq 0$. Let $(\Om,\sF,(\sF_t)_{t\in[0,T]},\PR)$ be a filtered probability space satisfying the usual conditions supporting $N+1$ mutually independent Brownian motions $B,\ (B^i)_{i=1,\ldots,N}$. Furthermore, we assume $\Om$ to be a Polish space, $\sF$ to be its Borel $\s$-algebra and $\PR$ to be atomless. Given $N$ mutually independent random variables $\xi^i$, $i=1,\ldots,N$, let $(\sF^i_t)_{t\in[0,T]}$ denote the augmented filtration generated by $B^i$ and $\xi^i$. For $p\ge 1$, we denote by $\prob_p(\Rd)$ the set of Borel probability measure supported on $\Rd$ with finite $p$-order moments. This set is equipped with the classical Wasserstein metric $W_p$.

\medskip

In the distributed formulation the $i$-th player chooses as controls feedback functions $\a^i:[0,T]\times\Rd\rightarrow\Rd$, and the corresponding state process $(X^i_t)_{t\in[0,T]}$ evolves according to the state equation

\begin{equation*}
    \begin{cases}
    dX^i_t=\a^i(t,X^i_t) dt+\sqrt{2\b} dB^i_t, &\textnormal{in } (0,T),\\
    X^i_0=\xi^i.
\end{cases}\end{equation*}

 Notice that $X^i_t$ is independent of $X^j_t$ for $j\neq i.$

\begin{definition}
    A feedback function $\a:[0,T]\times\Rd\rightarrow\Rd$ is an {\it admissible distributed control} whenever it is a measurable function such that the SDE above is (strongly) wellposed, in which case we write $\a\in\A^{dist}$. Throughout this paper we mean admissible feedback controls whenever we write controls.
\end{definition}

The notations $X_t^{-i}=(X_t^j)_{j\neq i}=(X_t^1,\ldots,X_t^{i-1},X_t^{i+1},\ldots,X_t^N)$ and $\a^{-i}(t,X^{-i}_t)=(\a^j(t,X^j_t))_{j\neq i}$ will be used throughout this paper.

\medskip

Given an $N$-tuple of controls $\a=(\a^1,\dots,\a^N)\in(\A^{dist})^{N},$  the cost functional of the $i$-th player is given by

\[J^i(\a^i;\a^{-i}):=\E\left[\int^T_0L^i\bigl(X^i_t,\a^i(t,X^i_t),X^{-i}_t,\a^{-i}(t,X^{-i}_t)\bigr)\ dt+g^i(X^i_T,X^{-i}_T)\right]\]

where the functions $L^i:\Rd\times\Rd\times(\Rd)^{N-1}\times(\Rd)^{N-1}\rightarrow\R$, $g^i:\Rd\times(\Rd)^{N-1}\rightarrow\R$ are the running cost and final cost for the $i$-th player, respectively.

\begin{definition}
    An $N$-tuple of controls $\a_*=(\a^1_*,\dots,\a^N_*)\in(\A^{dist})^{N}$ defines a {\it distributed Nash equilibrium} if the following holds for all $i=1,\ldots,N$

\begin{equation}\label{equilibrium condition}
J^i(\a_*)\leq J^i(u;\a_*^{-i})\ \ \ \forall u\in\A^{dist},
\end{equation}
where we have used the notation $(u;\a_*^{-i}) := (\a_*^1,\dots,\a_*^{i-1},u,\a_*^{i+1},\dots,\a_*^N).$

\end{definition}

We define the \say{lifted} functions $\Lt^i:\Rd\times\Rd\times(\prob_2(\Rd))^{N-1}\times (C_b(\Rd))^{N-1}\rightarrow\R$, $\gt^i:\Rd\times(\prob_2(\Rd))^{N-1}\rightarrow\R$ by

\[\Lt^i(x^i,a^i,m^{-i},f^{-i}):=\int_{(\Rd)^{N-1}}L^i\bigl(x^i,a^i,x^{-i},f^{-i}(x^{-i})\bigr)\ dm^{-i}(x^{-i}),\]

\[\gt^i(x^i,m^{-i}):=\int_{(\Rd)^{N-1}}g^i\bigl(x^i,x^{-i}\bigr)\ dm^{-i}(x^{-i}).\]

where $dm^{-i}(x^{-i})=\prod_{j\neq i}dm^j(x^j)$.

\medskip

Considering the stochastic optimal control problem faced by the $i$-th player, we formally derive the following PDE system 
\begin{equation}\label{DNash}
\left\{
    \begin{array}{ll}
    -\p_tw^i(t,x)-\b\D w^i(t,x)+\Ht^i(x,D_xw^i(t,x),m^{-i}_t,\a^{-i}_t)=0, & {\rm in}\ (0,T)\times\Rd,\\[3pt]
    \p_t m^i_t-\b\D m_t^i +\na\cdot(m^i_t \a^i_t)=0, & {\rm in}\ (0,T)\times\Rd,\\[3pt]
    \a^i(t,x)=-D_p\Ht^i(x,D_xw^i(t,x),m^{-i}_t,\a^{-i}_t)), & {\rm in}\ (0,T)\times\Rd,\\[3pt]
    w^i(T,x)=\gt^i(x,m^{-i}_T),\ m^i_0=\m^i, & {\rm in}\ \Rd.
\end{array}
\right.
\end{equation}

where $\m^i=\Leb(\xi^i)=\xi^i_\sharp \mathbb{P}$ and $\Ht^i:\Rd\times\Rd\times(\prob_2(\Rd))^{N-1}\times (C_b(\Rd))^{N-1}\rightarrow\R$ is the Hamiltonian associated to $\Lt^i$ defined by
\[\Ht^i(x,p,m^{-i},f^{-i})=\max_{a\in\Rd}\{-p\cdot a-\Lt^i(x,a,m^{-i},f^{-i})\}.\]
\begin{definition}\label{def classical solution}
    We say that a triplet $(w,m,\a)=(w^i,m^i,\a^i)_{i=1,\ldots,N}$ is a  solution of the PDE system (\ref{DNash}) if $w^i\in C^{1,2}([0,T]\times\Rd)$ ($C^1([0,T]\times\Rd)$ if $\b=0$) are classical solutions to the Hamilton--Jacobi--Bellman (HJB) equations, $m^i\in C([0,T];\prob_1(\Rd))$ are distributional solutions to the Kolmogorov--Fokker--Planck (KFP) equations and $\a^i\in C([0,T];W^{1,\i}(\Rd))$ ($C([0,T];\Lip(\Rd))$ if $\b=0$) are vector fields that are Lipschitz continuous in spacial variable satisfying the third \say{consistency} equations. 
\end{definition}
By classical verification arguments, one can show that under suitable convexity assumptions a solution $(w,m,\a)$ to (\ref{DNash}) defines an $N$-tuple of distributed equilibrium controls $\a\in(\A^{dist})^N$. For the reader's convenience we include a proof of this result in Theorem \ref{Verification thm}.
\begin{remark}\label{trivial consistency relation}
    The consistency relation 
    \[\a^i(t,x)=-D_p\Ht^i(x,D_xw^i(t,x),m^{-i}_t,\a^{-i}_t))\]
    becomes trivial if $D_p\Ht^i$ does not depend on the last variable. This happens for example when the running costs are of the form
    \begin{equation}\label{eq:semi-sep-con}
    L^i(x^i,a^i,x^{-i},a^{-i})=L^i_1(x^i,a^i,x^{-i})+L^i_2(x^i,x^{-i},a^{-i}).
    \end{equation}
 \end{remark}
  This motivates the following definition.
  
 \begin{definition}\label{def:sep}
    We say that the running costs are {\it semi-separable in the control distribution}, if they satisfy \eqref{eq:semi-sep-con} for some $L^i_1:\Rd\times\Rd\times(\Rd)^{N-1}\to\R$, $L^i_2:\Rd\times(\Rd)^{N-1}\times(\Rd)^{N-1}\to\R.$

\medskip

    If the running costs can be further separated into the form 
    \[L^i(x^i,a^i,x^{-i},a^{-i})=L^i_1(x^i,a^i)+L^i_2(x^{-i})+L^i_3(a^{-i}),\]
    for some $L^i_1:\Rd\times\Rd\to\R$, $L^i_2,L^i_3:(\Rd)^{N-1}\to\R,$
    we say that the running costs are {\it fully separable}, in which case we can simplify the expressions of the Hamiltonians associated to the lifted running costs as
    \[\Ht^i(x,p,m^{-i},f^{-i})=H^i(x,p)-\Lt^i_2(m^{-i})-\Lt^i_3(\a^{-i}),\]
    where $H^i$ is the {classical} Hamiltonian associated to $L^i_1$, i.e.
    \[H^i(x,p)=\max_{a\in\Rd}\{-p\cdot a-L^i_1(x,a)\}.\]
Another form of separability for the running costs is
\[L^i(x^i,a^i,x^{-i},a^{-i})=L^i_1(x^i,a^i,a^{-i})+L^i_2(x^i,x^{-i},a^{-i}),\]
for some $L^i_1:\Rd\times\Rd\times(\Rd)^{N-1}\to\R$, $L^i_2:\Rd\times(\Rd)^{N-1}\times(\Rd)^{N-1}\to\R$, and 
we say that this form of running costs is {\it semi-separable in the state distribution}.
\end{definition}


It is well known that the Pontrayagin maximum principle (PMP) is a necessary condition of optimality (e.g. \cite[Theorem 3.3]{yong2012stochastic}, \cite[Theorem 4.1]{alparmouMFGunderDmonotone}, \cite[Section 1.1.3]{cirant2025nonasymptoticapproachstochasticdifferential}), given $(w,m,\a)$ a classical solution to (\ref{DNash}), we can associate a (strong) solution to the FBSDE called the PMP system,
\begin{equation}\label{PMP}
    \begin{cases}
        X^i_t=\displaystyle\xi^i+\int^t_0\a^i(s,X^i_s)\ ds+\sqrt{2\b} B^i_t, & t\in [0,T],\ \\[5pt]
        Y^i_t=\displaystyle D_x\gt^i(X^i_T,m^{-i}_T)-\int^T_tD_x\Ht^i(X^i_s,Y^i_s,m^{-i}_s,\a^{-i}_s)\ ds-\int^T_tZ^i_s\ dB^i_s , & t\in [0,T],   \end{cases}
\end{equation}
with $Y^i_t=D_xw^i(t,X^i_t)$ and $Z^i_t=\sqrt{2\b}D^2_{xx}w^i(t,X^i_t)$ for all $t\in[0,T]$.

\medskip

Let us also we briefly mention the fully deterministic setting, that is, $\b=0$ and $\xi^i=z^i$ for some $z^i\in\Rd$, $i\in\{1,\dots,N\}$. In this case the system (\ref{DNash}) simplifies to
\begin{equation}\label{DNash deterministic}
\left\{
    \begin{array}{ll}
        -\p_tw^i(t,x)+H^i(x,D_xw^i(t,x),X^{-i}_t,\a^{-i}(t,X^{-i}_t))=0, & {\rm in}\ (0,T)\times\Rd,\\[3pt]
        \dot{X}^i_t=\a^i(t,X^i_t), & {\rm in}\ (0,T),\\ [3pt]
    \a^i(t,x)=-D_pH^i(x,D_xw^i(t,x),X^{-i}_t,\a^{-i}(t,X^{-i}_t)), & {\rm in}\ (0,T)\times\Rd,\\[3pt]
    w^i(T,x)=g^i(x,X^{-i}_T),\ X^i_0=z^i, & {\rm in}\ \Rd,
    \end{array}
   \right.
\end{equation}

where the Hamiltonian is defined in its usual form
\[H^i(x,p,y^{-i},A^{-i})=\max_{a\in\Rd}\{-p\cdot a-L^i(x,a,y^{-i},A^{-i})\}.\]
In fact this is simply a special case of the lifted Hamiltonians evaluated at the flows of dirac measures $\d_{X^i_t}$. Similarly, the PMP \eqref{PMP} also simplifies to the FBODE

\begin{equation}\label{deterministic PMP}
    \begin{cases}
        X^i_t=\displaystyle z^i+\int^t_0\a^i(s,X^i_s)\ ds,  & t\in [0,T],\\[5pt]
        Y^i_t=\displaystyle D_xg^i(X^i_T,X^{-i}_T)-\int^T_tD_xH^i(X^i_s,Y^i_s,X^{-i}_s,\a^{-i}(s,X^{-i}_s))\ ds,  & t\in [0,T].  
    \end{cases}
\end{equation}

\begin{remark}\label{remark 1}
    Notice that \eqref{equilibrium condition} is equivalent to if we replace $\A^{dist}$ by the set of all $(\sF^i_t)$-progressively measurable processes. In particular, in a fully deterministic game, a distributed equilibrium is an open-loop equilibrium, as the underlying filtered probability space and filtrations generated by different Brownian motions no longer play a role. For a more detailed explanation, we refer to Section \ref{discussion}.
\end{remark}

\subsection*{Formulation of the MFGCs.}
We describe now the setting in the mean field limit. First we introduce the cost functions, which will be particularised as follows. For all $i=1,\ldots,N$, let 
\begin{equation}\label{MF cost functions}
L^i(x^i,a^i,x^{-i},a^{-i}):=L\left(x,a,m^{N,-i}_{x,a}\right),\ g^i(x^i,x^{-i}):=g\left(x,m^{N,-i}_x\right),\end{equation}
where $L:\Rd\times\Rd\times\prob(\R^{d}\times\Rd)\rightarrow\R$, $g:\Rd\times\prob(\Rd)\rightarrow\R$ and for $z\in\R^k$ we set
\[m^{N,-i}_z:=\frac{1}{N-1}\sum_{j\neq i}\d_{z^j}.\]
Given a flow of measures $(\m_t)_{t\in[0,T]}\in C([0,T];\prob_2(\R^{d}\times\Rd))$ representing the joint evolution of an aggregate of players and their respective controls, the state equation of a representative player is given by
\[
\left\{
\begin{array}{ll}
    dX_t=\a_t dt+\sqrt{2\b} dB_t, & t\in (0,T), \\
    X_0=\xi,
\end{array}
\right.
\]
where admissible controls are all $(\sF_t)$-progressively measurable processes, $\xi$ is a random variable such that its law $\Leb(\xi)=m_0=(\pi_1)_\sharp\mu_0$ is the first marginal of $\m_0$.

\medskip

Let us denote by $m_t:=(\pi_1)_\sharp\m_t$  the first marginal of the flow of measures $(\m_t)_{t\in[0,T]}.$ The cost functional of a representative player, associated to the state dynamics and the given flow of measures, is given by
\[J(\a):=\E\left[\int^T_0L\bigl(X_t,\a_t,\m_t)\bigr)\ dt+g(X_T,m_T)\right].\]

\medskip

Denoting a minimiser by $\a^*=(\a^*_t)_{t\in[0,T]}$ and the corresponding optimal state process as $X^*=(X^*_t)_{t\in[0,T]}$, we say the flow $(\m_t)_{t\in[0,T]}$ forms a {\it mean field equilibrium} if 
\begin{equation}\label{compatibility condition}
\Leb(X^*_t,\a^*_t)=\m_t,\ \ \ \ \forall t\in[0,T].\end{equation}
By considering the optimisation problem faced by the representative player, one finds that mean field equilibria are characterised by the PDE system (c.f. \cite{jackson2025quantitativeconvergencedisplacementmonotone,Kobeissi04032022})
\begin{equation}\label{MFG}
\left\{    
    \begin{array}{ll}
        -\p_tv(t,x)-\b\D v(t,x)+H(x,D_xv(t,x),\m_t)=0, & {\rm in}\ (0,T)\times\Rd,\\[3pt]
        \p_tm_t-\b\D m_t-\na\cdot(m_tD_p H(x,D_xv,\m_t))=0, & {\rm in}\ (0,T)\times\Rd,\\[3pt]
        \m_t=(\textnormal{Id},-D_pH(\cdot,D_xv(t,\cdot),\m_t))_\sharp m_t, & {\rm in}\  [0,T],\\[3pt]
        v(T,x)=g(x,m_T),\ m_0=\Leb(\xi), & {\rm in}\ \Rd,\\
    \end{array}
\right.
\end{equation}
where the Hamiltonian $H:\Rd\times\Rd\times\prob_2(\Rd\times\Rd)\rightarrow\R$ is defined as
\[H(x,p,\m)=\max_{a\in\Rd}\{-p\cdot a-L(x,a,\m)\}.\]

\begin{definition}
We say that a pair $(v,\m)$ is a solution of the PDE system (\ref{MFG}) if $v\in C^{1,2}([0,T]\times\Rd)$ ($C^1$ if $\b=0$) is a classical solution to the HJB equation, $m\in C([0,T];\prob_2(\Rd))$ is a distributional solution to the KFP equation and $\m\in C([0,T];\prob_2(\Rd\times\Rd))$ satisfies the third consistency equation.    
\end{definition}

\begin{remark}\label{trivial consistency relation 2}
    As noted also in \cite{jackson2025quantitativeconvergencedisplacementmonotone}, the consistency relation 
    \[\m_t=(\textnormal{Id},-D_pH(\cdot,D_xv(t,\cdot),\m_t))_\sharp m_t\]
    becomes trivial if $D_pH$ does not depend on the second marginal of the measure variable. We characterise the notion of separability of the running costs in the mean field case by the notion of separability of their finite dimensional projections (c.f. \eqref{MF cost functions} and Remark \ref{trivial consistency relation} and Definition \ref{def:sep}).

    \medskip

    Let us note that the semi-separable in state distribution case allows for Lasry--Lions monotonicity to be employed (c.f. \cite[Page 29]{cardaliaguetMFG}). 
\end{remark}

As in the case of $N$-player games, given a solution $(v,\m)$ to (\ref{MFG}) we can associate a solution to the PMP  
\begin{equation}\label{PMPMFG}
 \left\{   
    \begin{array}{ll}
      X_t=\displaystyle\xi-\int^t_0 D_pH(X_s,Y_s,\m_s) \ ds+\sqrt{2\b} B_t, & t\in [0,T],\\
        Y_t=\displaystyle D_xg(X_T,m_T)-\int^T_tD_xH(X_s,Y_s,\m_s)\ ds-\int^T_tZ_s\ dB_s, & t\in [0,T],\\
        \m_t=\displaystyle\Leb(X_t,-D_pH(X_t,Y_t,\m_t)), & t\in [0,T],
    \end{array}
 \right.
\end{equation}
with $Y_t=D_xv(t,X_t)$ and $Z_t=\sqrt{2\b}D^2_{xx}v(t,X_t)$ for all $t\in[0,T]$.

\medskip

In order to speak about the convergence problem, for each $N\in\N,$ we also need to consider $N$ i.i.d. copies of mean field equilibrium trajectories (\ref{PMPMFG})
\begin{equation}\label{iidPMPMFG}
 \left\{  
    \begin{array}{ll}
        \Xh^i_t=\displaystyle\widehat{\xi}^i-\int^t_0 D_pH(\Xh^i_s,\Yh^i_s,\m_s) \ ds+\sqrt{2\b} B^i_t,\ & t\in[0,T],\\
        \Yh^i_t=\displaystyle D_xg(\Xh^i_T,m_T)-\int^T_tD_xH(\Xh^i_s,\Yh^i_s,\m_s)\ ds-\int^T_t\widehat{Z}^i_s\ dB^i_s, \ & t\in[0,T],
    \end{array}
\right.
\end{equation}
where $(\widehat{\xi}^i)_{i=1,\ldots,N}$ are i.i.d. random variables with common law $\Leb(\widehat{\xi}^i)=m_0$.

\subsection*{Summary of our main results.}
We will summarise our main results informally, for precise statements we refer to later sections. The following statements are to be understood under our standing assumptions that we will specify later.  In particular, our results will impose displacement semimonotonicity assumptions. For  displacement semimonotone settings in MFG --- both with and without interactions through controls --- we refer to the non-exhaustive list of works \cite{GangboMeszarosPotentialMasterEqn, GangboMeszarosMouZhangMasterEqn, BanMesMou, BanMes:25-fms, CirantRedaelli, cirant2025nonasymptoticapproachstochasticdifferential, carmonadelarue1, jacksontangpi, JacMes:26, jackson2025quantitativeconvergencedisplacementmonotone}.

\medskip

We emphasise that in these following results we allow for all nonnegative values of $\b\geq 0$.
\begin{theorem}
   Under our standing assumptions, there exists a unique classical solution to the system \eqref{DNash}, i.e. distributed equilibria to the $N$-player games exist and are unique. Moreover, let $(w,m,\a),(\overline{w},\mb,\alb)$ be two classical solutions to \eqref{DNash} with initial measures $\m=(\mu^1,\dots,\mu^N),\overline{\m}=(\overline{\m}^1,\dots,\overline{\m}^N)$ respectively. Then there exists a constant $C>0$ such that
    \[\sup_{t\in[0,T]}\sum_{i=1}^NW_2(m^i_t,\mb^i_t)\leq C\sum^N_{i=1}W_2(\m^i,\overline{\m}^i).\]
\end{theorem}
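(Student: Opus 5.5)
We will prove the stability estimate first and then get existence and uniqueness from it by a continuation argument. Uniqueness is the special case $\m=\overline{\m}$: all $m^i$ coincide, so the $w^i$ coincide by uniqueness for the HJB equations, and hence so do the $\a^i$.

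\textbf{Stability.} Given two classical solutions $(w,m,\a)$ and $(\overline{w},\mb,\alb)$, we pass to the associated PMP systems \eqref{PMP}. We couple them on one probability space, with the same Brownian motions $B^i$ and with initial data $(\xi^i,\overline{\xi}^i)$ independent across $i$ and chosen as optimal $W_2$-couplings of $(\m^i,\overline{\m}^i)$. This is possible since $\PR$ is atomless and $\Om$ is Polish. Set $\D X^i=X^i-\Xb^i$, $\D Y^i=Y^i-\Yb^i$, and $\D\a^i_t=\a^i(t,X^i_t)-\alb^i(t,\Xb^i_t)$. Itô's formula applied to $\sum_i \D X^i_t\cdot\D Y^i_t$ between $0$ and $T$ gives
\[
\E\sum_i\D X^i_T\cdot\D Y^i_T-\E\sum_i\D X^i_0\cdot\D Y^i_0=\E\int_0^T\sum_i\Bigl(\D\a^i\cdot\D Y^i+\D X^i\cdot\bigl(D_x\Ht^i-\overline{D_x\Ht^i}\bigr)\Bigr)dt .
\]
We then substitute $Y^i=-D_aL^i$ evaluated along the optimal control, which follows from the consistency relation via Legendre duality. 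We also unfold the lifted quantities using independence, so that the $m^{-i}_t$-integrals become expectations over the independent copies $X^{-i}_t$ and $\a^{-i}_t=\a^{-i}(t,X^{-i}_t)$. The right-hand side then becomes
\[
-\E\int_0^T\sum_i\bigl\langle (D_xL^i,D_aL^i)(\cdot)-(D_xL^i,D_aL^i)(\overline{\cdot}),(\D X^i,\D\a^i)\bigr\rangle\,dt .
\]
Here each $L^i$ is evaluated at the full vector $(X^i,\a^i,X^{-i},\a^{-i})$.

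The standing displacement semimonotonicity assumption on the family $(L^i)$ should bound this sum from below by $\lambda\int\E|\D\a|^2-C\int\E|\D X|^2$. The analogous assumption on $(g^i)$ bounds the terminal term by $-C\,\E|\D X_T|^2$. The key observation is that the cross-terms in $a^{-i}$ are handled precisely because distributed controls depend only on each player's own state, which is the structural difference from the open- and closed-loop cases. Combining this with the trivial bound $\E|\D X_t|^2\le 2\E|\D X_0|^2+2t\int_0^t\E|\D\a|^2$, and assuming $T$-independent smallness of the semiconvexity constants relative to $\lambda$ (the usual semimonotone regime), we absorb the negative terms. This yields $\int_0^T\E|\D\a|^2\le C\,\E|\D X_0|^2+C\,\E|\D X_0||\D Y_0|$. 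A standard BSDE estimate gives $\E|\D Y_0|^2\lesssim\E|\D X_T|^2+\int\E|\D X|^2+\int\E|\D\a|^2$, which closes by Young's inequality. The result is $\sup_t\sum_i\E|\D X^i_t|^2\le C\sum_iW_2^2(\m^i,\overline{\m}^i)$. Since $W_2(m^i_t,\mb^i_t)^2\le\E|\D X^i_t|^2$, summing and using Cauchy--Schwarz gives the claimed bound (with an $N$-dependent $C$ if necessary).

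\textbf{Existence.} We use the method of continuation in a parameter $\th\in[0,1]$, interpolating between a trivially solvable fully separable game (e.g.\ $L^i=\frac{\lambda}{2}|a|^2$ plus uncoupled terms) and the target costs. The a priori estimate above, applied with perturbation terms and uniform in $\th$, shows that the set of solvable $\th$ is open: the fixed point for small increments is a contraction in the $(X,\a)$-norm. We combine this with Lipschitz-in-$x$ bounds on $D_xw^i$ and hence on $\a^i$. These bounds come from applying the same displacement argument to two initial points with the data frozen, as in the semimonotone MFG literature.

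\textbf{Main obstacle.} The step we expect to be hardest is keeping $\a^i(t,\cdot)$ uniformly Lipschitz along the continuation. The consistency relation $\a^i=-D_p\Ht^i(\cdot,D_xw^i,m^{-i},\a^{-i})$ is implicit in $\a^{-i}$, and it must be solved at each time as a fixed point on $\prod_j W^{1,\i}$. We would do this via strong convexity of $L^i$ in $a^i$ together with smallness or monotonicity of the $a^i$–$a^{-i}$ interaction.
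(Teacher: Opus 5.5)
Your stability argument is the paper's (Theorem~\ref{stability theorem}). You couple the two PMP systems, apply It\^o's formula to $\sum_i(X^i_t-\Xb^i_t)\cdot(Y^i_t-\Yb^i_t)$, and unlift $D_a\Lt^i$, $D_x\Lt^i$, $D_x\gt^i$ via Lemma~\ref{Dudley lemma} and the tower property. You then apply Assumption~\ref{Assumption semimon} and close with the BSDE and forward estimates. Two corrections are needed.

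\textbf{Absorption constant.} The standing condition is $C_{disp}=C_{L,a}-TC_g-\frac{T^2}{2}C_{L,x}>0$. Your bound $\E|\D X_t|^2\le 2\E|\D X_0|^2+2t\int_0^t\E|\D\a_s|^2ds$ only absorbs under the stronger condition $C_{L,a}>2TC_g+T^2C_{L,x}$. Split instead as $(1+\e^{-1})\E|\D X_0|^2+(1+\e)t\int_0^t\E|\D\a_s|^2ds$.

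\textbf{Uniqueness.} This is the more important point: uniqueness does not follow from $m=\mb$ alone. The HJB equation for $w^i$ contains $\Ht^i(x,D_xw^i,m^{-i}_t,\a^{-i}_t)$, so you first need $\a^j(t,\cdot)=\alb^j(t,\cdot)$ $m^j_t$-a.e. Your own estimate supplies this, since it gives $\int_0^T\E|\D\a_t|^2dt=0$ (then use Fubini, as in Corollary~\ref{uniqueness cor}). Because $\Ht^i$ sees $\a^{-i}$ only through $m^{-i}$-integrals, the Hamiltonians then coincide. Hence $w=\overline{w}$, and the consistency relation gives $\a=\alb$ everywhere.

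\textbf{Existence: a different route.} The paper uses no monotonicity here. It applies Schauder--Tychonoff to $S(\alb)^i=-D_p\Ht^i(x,D_xw^i,m^{-i}_t,\alb^{-i}_t)$ on the set $\mathscr{X}$ of feedbacks bounded by $C_3$, $C_2$-Lipschitz in $x$ and $\frac13$-H\"older in $t$ (Theorems~\ref{existence thm stocahstic} and~\ref{existence thm intermediate}). The $x$-Lipschitz bound comes from $\|D^2_{xx}w^i\|_{\infty,\op}\le C_1$, which follows from convexity of the frozen value function (Theorem~\ref{convexity thm}) plus semiconcavity. The time bound comes from Kru\v{z}kov's argument. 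Condition~9 of Assumption~\ref{Assumption L} makes the sup bound and the time modulus reproduce themselves, because $D_p\Ht^i$ is a $\l_{i,\min}^{-1}\sum_{j\neq i}\|D^2_{A^ja}L^i\|_{\infty,\op}$-contraction in $f^{-i}$ (Lemma~\ref{lemma regularity L}). This gives existence for every $N$, with semimonotonicity used only for uniqueness.

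Your continuation instead ties existence to the energy estimate. That estimate is uniform in $\th$ for the interpolation $(1-\th)\frac{\lambda}{2}|a|^2+\th L^i$, $\th g^i$, since then $C^\th_{disp}\ge\min\{\lambda,C_{disp}\}$. This is legitimate under the standing assumptions. It could even replace condition~9 in solving the consistency relation by the strong monotonicity in $a$ contained in Assumption~\ref{Assumption semimon} (take $x=\xb$ and apply Browder--Minty in $\prod_jL^2(m^j_t)$).

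As written, however, it is only a sketch, and three things are missing:
\begin{itemize}
\item The energy estimate controls feedbacks only along trajectories, through $\E\int_0^T|A_t-\Ab_t|^2dt$. So the fixed point must be taken on processes for the PMP system, not on $\prod_jW^{1,\infty}$. The lifted coefficients should depend on $\otimes_{j\neq i}\Leb(X^j_t,A^j_t)$, and the perturbations must be $\sF^i$-adapted so that independence across players, and hence Lemma~\ref{Dudley lemma}, survives each step.
\item The step size must be uniform in $\th$; openness alone would also require closedness.
\item A classical $(w,m,\a)$ in the sense of Definition~\ref{def classical solution} must then be rebuilt from frozen-data HJB equations. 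This needs time regularity of $t\mapsto\Leb(X^j_t,A^j_t)$ for the parabolic regularity theory.
\end{itemize}

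Finally, the obstacle you single out is not the real one. Spatial Lipschitz continuity of $\a^i$ is immediate from the $C^{1,1}$ bound on $w^i$, since $\a^{-i}$ enters $D_p\Ht^i$ only through $m^{-i}$-integrals and not through $x$. The delicate points are the consistency relation and the sup and time bounds on $\a$, which is exactly where the paper spends condition~9.
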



\begin{theorem}
    Under our standing assumptions, the system \eqref{MFG} is wellposed, i.e. MF equilibria to MFGCs exist and are unique. 
\end{theorem}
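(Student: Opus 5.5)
The plan is to work at the level of the PMP system \eqref{PMPMFG} rather than the PDE \eqref{MFG}. The mean field fixed point will be resolved in two layers: first the consistency relation for the control marginal, then the forward-backward system.

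\textbf{Step 1: the consistency relation.} Under the standing displacement semimonotonicity and strong convexity assumptions on $L$ in $a$, I will show that for every fixed $(x,p)$-law $\nu\in\prob_2(\Rd\times\Rd)$ there is a unique $\m$ with
\[\m=\Leb\bigl(X,-D_pH(X,Y,\m)\bigr),\qquad \Leb(X,Y)=\nu.\]
Equivalently, I will look for the unique random variable $A$ with $A=-D_pH(X,Y,\Leb(X,A))$, i.e. $D_aL(X,A,\Leb(X,A))=-Y$. I expect this to follow from a monotonicity/contraction argument in $L^2(\Om)$. The key inequality is
\[\E\bigl[(D_aL(X,A,\Leb(X,A))-D_aL(X,A',\Leb(X,A')))\cdot(A-A')\bigr]\ge c\,\E|A-A'|^2,\]
which is the displacement monotonicity in the control variable assumed in the standing hypotheses. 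This yields a map $\Phi(\nu)=\m$ that is Lipschitz in $W_2$. This is the analogue of the step in \cite{jackson2025quantitativeconvergencedisplacementmonotone}.

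\textbf{Step 2: existence.} I will substitute $\Phi$ into \eqref{PMPMFG}. This produces a McKean--Vlasov FBSDE whose coefficients depend on $\Leb(X_t,Y_t)$. Existence will come from the $N$-player theory combined with compactness: take i.i.d. initial data, use the uniform-in-$N$ estimates on distributed equilibria, and pass to the limit via the convergence results. Alternatively, I will apply a continuation method in a parameter $\lambda\in[0,1]$ that interpolates between a decoupled convex problem and the full one, in the style of Peng--Wu. The a priori estimate driving the continuation comes from the semimonotonicity. Its basic ingredient is the identity obtained by applying It\^o's formula to $(X_t-X'_t)\cdot(Y_t-Y'_t)$ for two solutions. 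Through convexity of $H$ in $p$ and the semimonotonicity of $g$ and $L$, this controls $\E\int_0^T|A_t-A'_t|^2dt$ by the perturbation.

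\textbf{Step 3: regularity and PDE solution.} From the decoupling field, I will obtain $v$ with $Y_t=D_xv(t,X_t)$ that is Lipschitz in $x$, with Lipschitz constant uniform via the same estimates. Then $v$ is a classical solution of the HJB equation for the frozen flow $\m$, by standard parabolic theory when $\b>0$ and by a Lipschitz flow argument when $\b=0$.

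\textbf{Step 4: uniqueness.} This follows from the same It\^o computation applied to two solutions with the same $\xi$. The semimonotonicity gives $\E\int|A-A'|^2\le 0$. Hence the controls coincide, so the $X$'s coincide, and then the $Y$'s coincide.

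The main obstacle is Step 2. The a priori estimate must absorb the semimonotonicity defects, which are possibly negative lower bounds in the $x$-variable, using only the strict convexity in $a$ and the smallness conditions built into the standing assumptions. This must be done while the measure argument includes the control marginal through $\Phi$. I would first try to close the estimate by combining the Step 1 Lipschitz bound on $\Phi$ with the ``$L$-displacement semimonotone'' inequality, and use a Gronwall argument on $\E|X_t-X'_t|^2$.
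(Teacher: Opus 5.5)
Your overall architecture matches the paper's for $\b>0$: build the $W_2$-Lipschitz map $\Phi$, solve the McKean--Vlasov FBSDE, freeze $\m_t=\Phi(\Leb(X_t,Y_t))$, solve the HJB equation with this frozen flow, identify $Y_t=D_xv(t,X_t)$ by uniqueness, and deduce uniqueness of \eqref{MFG} from that of \eqref{PMPMFG}. Your key inequality in Step 1 is just Assumption \ref{Assumption MF semimon} with $X=\Xb$. The paper simply cites \cite[Lemma 3.3, Appendix A]{jackson2025quantitativeconvergencedisplacementmonotone} for $\Phi$ and for the FBSDE. In Step 4, semimonotonicity alone does not give $\E\int_0^T|\Delta A_t|^2dt\le 0$; you must combine it with $\E|\Delta X_t|^2\le t\int_0^t\E|\Delta A_s|^2ds$ and $C_{disp}>0$.

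The divergence is in existence, and your first option there does not work. Theorems \ref{conv thm 1} and \ref{conv thm2} take the MFG solution $(v,\m)$ and the i.i.d.\ copies \eqref{iidPMPMFG} as input, so they cannot produce it. A genuine compactness argument would instead have to pass to the limit in the lifted consistency relations $\a^i=-D_p\Ht^i(\dots)$. You do not address this, and the paper points out that $D_p\Ht^i$ and $D_pH$ cannot be compared directly.

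The continuation option does work, and it is a genuinely different route. Because the It\^o/semimonotonicity estimate never uses the noise, it gives the FBSDE (an FBODE with random initial datum when $\b=0$) for every $\b\geq 0$ at once. The paper reaches $\b=0$ only by vanishing viscosity, since the cited existence result relies on nondegenerate noise. That argument (Theorem \ref{thm vanishing viscosity MFG}) uses $\b$-uniform $C^{1,1}$ bounds on $v_\b$, Arzel\`a--Ascoli, and Theorem \ref{thm cont eqn} for the limiting nonlocal continuity equation. In exchange, the paper's detour yields $v_\b\to v$ and $D_xv_\b\to D_xv$ locally uniformly, which it reuses in Theorem \ref{conv thm5}.

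Your sketch needs two adjustments. First, the a priori estimate is not the obstacle and needs no Gr\"onwall argument. It is exactly the computation of Theorems \ref{stability theorem} and \ref{conv thm 1}, closed by $C_{disp}>0$. What you must actually check is that the interpolation keeps this constant positive uniformly in $\lambda$. Taking the drift $\lambda\bigl(-D_pH\bigr)+(1-\lambda)\bigl(-Y/C_{L,a}\bigr)$ and multiplying the driver and $D_xg$ by $\lambda$ gives $C^\lambda_{disp}\geq C_{disp}$. The naive choice $-(1-\lambda)Y$ need not preserve positivity when $C_{L,a}$ is large.

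Second, at $\b=0$ Step 3 requires more than a ``Lipschitz flow argument''. You need:
\begin{itemize}
\item a $C^1$ solution of the frozen first-order HJB equation with $D_xv(t,\cdot)$ Lipschitz, which follows from convexity plus semiconcavity of the data as in Theorem \ref{existence thm intermediate};
\item the fact that the Pontryagin costate equals $D_xv(t,X_t)$ along optimal trajectories;
\item uniqueness of the frozen FBODE, from convexity of $L$ in $(x,a)$ and of $g$ in $x$.
\end{itemize}
Together these let you conclude $Y_t=D_xv(t,X_t)$.
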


\begin{theorem}
     Suppose that our standing assumptions take place.
     \begin{itemize}[label=\raisebox{0.25ex}{\tiny$\bullet$}]
         \item Let $(\xi^i)_{i=1,\ldots,N}$ be $L^q$-independent random variables;
         \item Let  $(\widehat{\xi}^i)_{i=1,\ldots,N}$ be a $L^q$-i.i.d. random variables, with common law $\Leb(\widehat{\xi}^i)=m_0\in\prob_q(\Rd)$, for some $q>2$, $q\notin\{4,\frac{d}{d-2}\}$, $i=1,\ldots,N$.
     \end{itemize}
     Then, there exists a constant $C>0$ independent of $N\in\N$ such that
    \[\frac{1}{N}\sum^N_{i=1}\E\Biggl[\sup_{t\in[0,T]}|X^i_t-\Xh^i_t|^2+\int^T_0|A^i_t-\Ah^i|^2\ dt\Biggr]\leq C(K(N) +r_{d,q}(N)),\]
    \[\max_{i=1,\ldots,N}\norm{w^i-v}_\infty\leq C\left(K(N)+r_{d,q}(N)\right)^\frac{1}{2}\]
    where
    \begin{itemize}[label=\raisebox{0.25ex}{\tiny$\bullet$}]
        \item $X=(X^1,\dots,X^N)$ is the first component of the solution to \eqref{PMP} with initial condition $\xi = (\xi^1,\dots,\xi^N)$;
        \item $\Xh=(\Xh^1,\dots,\Xh^N)$ is the first component of the solution to \eqref{iidPMPMFG} with initial condition $\widehat{\xi} = (\widehat{\xi}^1,\dots,\widehat{\xi}^N)$;
        \item $(w,m,\a)$ is the solution to the $N$-player system \eqref{DNash} with $m^i_0=\Leb(\xi^i)\in \prob_q(\Rd)$;
        \item $(v,\m)$ is the solution to the MFG system \eqref{MFG};
        \item $A,\Ah$ are the corresponding control processes given by $A^i_t=\a^i(t,X^i_t)$ and $ \Ah_t^i=-D_pH(\Xh^i_t,\Yh^i_t,\m_t)$ 
    \end{itemize} and 
    \[K(N):=\frac{1}{N}\sum_{i=1}^N W_2^2(m_0,\Leb(\xi^i)),\]
    \[r_{d,q}(N)=\begin{cases}
    N^{-\frac{1}{2}}+N^{-\frac{q-2}{q}}\ &d<4\\
    N^{-\frac{1}{2}}\log(1+N)+N^{-\frac{q-2}{q}}\ &d=4\\
    N^{-\frac{2}{d}}+N^{-\frac{q-2}{q}}\ &d>4.
\end{cases}\]
Furthermore, in the case $\b=0$ and $m^i_0=\d_{z^i}$, i.e. the case of \eqref{DNash deterministic}, we have also     \[\max_{i=1,\ldots,N}\norm{D_xw^i-D_xv}_\infty\leq C\left(K(N)+r_{d,q}(N)\right)^\frac{1}{2}.\]
\end{theorem}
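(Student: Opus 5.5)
The plan is a synchronous-coupling argument on the Pontryagin systems, driven by displacement semimonotonicity. I would compare the $N$-player PMP system \eqref{PMP}, with $Y^i_t=D_xw^i(t,X^i_t)$, against the i.i.d.\ mean field copies \eqref{iidPMPMFG}, using the same Brownian motions $B^i$. The initial data $\xi^i$ and $\widehat{\xi}^i$ are coupled optimally, so that $\E|\xi^i-\widehat\xi^i|^2=W_2^2(\Leb(\xi^i),m_0)$.

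The first step is to rewrite the $N$-player equations as perturbations of the mean field ones. For each $i$, the drift of $X^i$ and the generator of $Y^i$ involve $\Ht^i$ evaluated at $(m^{-i}_t,\a^{-i}_t)$. Using \eqref{MF cost functions}, these quantities are expectations over $X^{-i}_t$ of functions of the empirical measure $m^{N,-i}_{(X_t,A_t)}$. I would add and subtract the same expressions evaluated at $\m_t$ and at the empirical measure $\widehat\m^{N,-i}_t:=\frac1{N-1}\sum_{j\neq i}\d_{(\Xh^j_t,\Ah^j_t)}$.

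The second step is the core of the argument. Apply Itô's formula to $\frac1N\sum_i (X^i_t-\Xh^i_t)\cdot(Y^i_t-\Yh^i_t)$ on $[0,T]$. Displacement semimonotonicity of $L$ and $g$, together with the uniform convexity of $L$ in $a$, should give a lower bound of the form
\[
c\,\frac1N\sum_i\E\int_0^T|A^i_t-\Ah^i_t|^2\,dt \;-\; C\,\frac1N\sum_i\E\sup_t|X^i_t-\Xh^i_t|^2 \;-\; \text{(error terms)}.
\]
Here the error terms consist of the initial-data term, controlled by $K(N)$, and the empirical-measure discrepancy $\E\,W_2^2(\widehat\m^{N,-i}_t,\m_t)$. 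For the latter I would invoke the Fournier--Guillin rates. Moment propagation of order $q$ for $(\Xh,\Ah)$ in dimension $2d$ produces $r_{d,q}(N)$, and the excluded values of $q$ correspond to the borderline cases of those rates. Since $X^i-\Xh^i$ is driven only by the control difference, Gronwall gives
\[
\frac1N\sum_i\E\sup_t|X^i_t-\Xh^i_t|^2\le C\Bigl(K(N)+\frac1N\sum_i\E\int_0^T|A^i_t-\Ah^i_t|^2\,dt\Bigr).
\]
The upper bound for the same Itô expression comes from Young's inequality. When the semimonotonicity constants are small relative to the convexity constant, which is presumably part of the standing assumptions, the control term absorbs everything and the first estimate follows.

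The main obstacle is the \emph{distributed} structure of the interaction. In the $N$-player system, player $i$ sees $\a^{-i}$ only through an integral against $m^{-i}_t$, not pathwise. Consequently the interaction term is $\E'[\,\cdot\,(X'^{-i}_t,A'^{-i}_t)]$, taken over an independent copy, rather than the realized empirical measure. I would handle this by introducing independent copies $(X'^j,A'^j)$ and exploiting the fact that the integrand is exchangeable, so monotonicity still applies after taking full expectations. This needs the additional assumption that $\Lt^i$ is Lipschitz in $(m^{-i},f^{-i})$ uniformly in $W_2$, with $f$ replaced by its graph pushforward. Controlling the consistency relation (the fixed point in $\a$) likewise requires $D_pH$ to be Lipschitz in the control marginal with a small constant, so that the difference can be absorbed.

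Finally, for the value functions I would represent $w^i(t,x)$ and $v(t,x)$ as optimal costs starting from $(t,x)$. For each problem I would use the other's optimal feedback as a competitor. The difference then reduces to the difference of the frozen environments $(m^{-i},\a^{-i})$ versus $\m$, which the first estimate bounds in $W_2$ by $(K(N)+r_{d,q}(N))^{1/2}$. In the deterministic case, $D_xw^i(t,X^i_t)=Y^i_t$ holds along every starting point. Repeating the FBODE stability argument from an arbitrary initial point $x$ at time $t$, with the environment perturbation as the only source, gives the same bound for $\norm{D_xw^i-D_xv}_\infty$.
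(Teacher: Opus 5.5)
Your plan is correct in outline. For the averaged trajectory/control estimate it follows the paper's route: synchronous coupling, It\^o on $\sum_i(X^i_t-\Xh^i_t)\cdot(Y^i_t-\Yh^i_t)$, semimonotonicity, then Fournier--Guillin. The value-function and gradient steps take genuinely different routes. Several justifications in the first part need correcting.

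\textbf{Corrections to the trajectory estimate.}

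(i) What removes the lift is \emph{independence}, not exchangeability. The $\xi^i$ are not identically distributed, so nothing is exchangeable. However, a distributed feedback depends only on a player's own state and noise, so $X^{-i}_t$ is independent of $\sF^i_t$. Hence $D_a\Lt^i(X^i_t,A^i_t,m^{-i}_t,\a^{-i}_t)=\E\bigl[D_aL(X^i_t,A^i_t,m^{N,-i}_{X_t,A_t})\,\big|\,\sF^i_t\bigr]$ (Lemma~\ref{Dudley lemma}), and likewise for $D_x\Lt^i$ and $D_x\gt^i$. The tower property then produces the \emph{realized} empirical measure. Keeping separate independent copies would actually break what you need, because semimonotonicity is a statement about a sum over $i$ evaluated at one common vector $(x,a)$. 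This step requires the coupled pairs $(\xi^i,\widehat\xi^i)$ to be independent across $i$, and the $N$-player form of semimonotonicity (Lemma~\ref{semimon lemma}).

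(ii) $D_p\Ht^i$ is \emph{not} an average of $D_pH$, because lifting does not commute with the Legendre transform. So drifts cannot be compared by adding and subtracting. Instead use the first-order conditions $Y^i_t=-D_a\Lt^i(X^i_t,A^i_t,m^{-i}_t,\a^{-i}_t)$ and $\Yh^i_t=-D_aL(\Xh^i_t,\Ah^i_t,\m_t)$, together with $D_x\Ht^i(X^i_t,Y^i_t,\dots)=-D_x\Lt^i(X^i_t,A^i_t,\dots)$. These turn every term into differences of $D_aL$ and $D_xL$. Semimonotonicity then absorbs the control interaction, and no contraction argument for the consistency relation is needed here; condition 9 serves only existence.

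(iii) To reach exactly $C_{disp}>0$, do not bound through the supremum. That route would need roughly $C_{L,a}>T(C_g+TC_{L,x})$. Instead use $\E|X_t-\Xh_t|^2\le(1+\eta)\,t\int_0^t\E|A_s-\Ah_s|^2\,ds+C_\eta\E|X_0-\Xh_0|^2$ and integrate in time. The $t=0$ boundary term needs $\e\,\E|Y_0-\Yh_0|^2+C_\e\E|X_0-\Xh_0|^2$ together with a BSDE estimate on $Y_0-\Yh_0$.

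(iv) Fournier--Guillin in $\R^{2d}$ gives $r_{2d,q}$, not $r_{d,q}$, and its borderline $q$ would differ. To obtain $r_{d,q}$, note that $\Ah^i_t=\widehat\a(t,\Xh^i_t)$ with $\widehat\a(t,\cdot)=-D_pH(\cdot,D_xv(t,\cdot),\m_t)$ Lipschitz. Hence $W_2(\m_t,m^{N,-i}_{\Xh_t,\Ah_t})\le(1+\Lip(\widehat\a))\,W_2(m_t,m^{N,-i}_{\Xh_t})$.

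\textbf{Value functions: a different route.} The paper inserts $a=-D_pH(x,D_xv,\m_t)$ into the supremum defining $\Ht^i$ to get a one-sided Hamiltonian inequality. It then applies the maximum principle to a linear parabolic inequality for $\b>0$, and a separate comparison argument for $\b=0$. Your competitor-feedback argument in the two representation formulas is the control-theoretic dual of the same inequality. It handles all $\b\ge0$ at once and avoids the growth conditions the maximum principle needs on $\R^d$. The price is invoking verification for both frozen problems, which holds. The estimate to state explicitly is $|\Lt^i(y,a,m^{-i}_s,\a^{-i}_s)-L(y,a,\m_s)|\le\Lip_m(L)\,\E\,W_2(m^{N,-i}_{X_s,A_s},\m_s)$, uniformly in $(y,a)$. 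Since the $(-i)$-average is at most $\frac{N}{N-1}$ times the full average, an individual bound follows from the averaged one.

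\textbf{Gradients: a different route.} The paper avoids characteristics. It regularises only the HJB equations, keeping the states deterministic so no lift appears, applies the maximum principle to $\p_{x_j}w^i_\b-\p_{x_j}v_\b$, and lets $\b\to0$ (Theorem~\ref{thm vanishing viscosity MFG}, Lemma~\ref{N vanishing}). It does this because at $\b=0$ the differentiated equations hold only a.e.

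Your characteristic route is shorter, but ``repeating the FBODE stability argument'' does not close by monotonicity. With a frozen environment the displacement condition does not apply, and joint convexity gives no coercive $|a-\hat a|^2$ term. What closes it is the uniform bound $\norm{D^2_{xx}w^i}_\i,\ \norm{D^2_{xx}v}_\i\le C_1$ (from convexity plus semiconcavity), together with the costate identity $p_s=D_xw^i(s,y_s)$ along optimal paths from any $(t,x)$.

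Let $(y,p)$ and $(\hat y,\hat p)$ be the characteristic pairs of player $i$'s frozen problem and of the mean field problem, both started at $(t,x)$. Set $\Delta(s)=\sup_x|D_xw^i(s,x)-D_xv(s,x)|$ and $e_s=W_2(m^{N,-i}_{X_s,A_s},\m_s)$. Then:
\begin{enumerate}
\item[(a)] $|p_s-\hat p_s|\le\Delta(s)+C_1|y_s-\hat y_s|$;
\item[(b)] forward Gr\"onwall gives $|y_s-\hat y_s|\le C\int_t^s(\Delta+e)\,dr$;
\item[(c)] the costate equations give $\Delta(t)\le C\,W_2(m^{N,-i}_{X_T},m_T)+C\int_t^T(\Delta+e)\,ds$.
\end{enumerate}
Backward Gr\"onwall then yields the paper's bound, with no viscous approximation.
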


\subsection*{Literature review related to our main results.}

\textit{MFGs.} The theory of Mean Field Games (MFGs) was introduced independently by Lasry--Lions \cite{MFGlionslasry} and Huang--Malham\'e--Caines \cite{MFGhuangcaines} to provide an approximation of $N$-player games when the number of player $N$ is very large and the curse of dimensionality renders the computation of Nash equilibria intractable for $N$-player games. From the PDE perspective, a typical MFG  can be characterised by solutions to the PDE system

\begin{equation}\label{standard MFG}
\left\{    
    \begin{array}{ll}
        -\p_tv-\b\D v+H(x,D_xv,m_t)=0, & {\rm in}\ (0,T)\times\Rd,\\[3pt]
        \p_tm_t-\b\D m_t-\na\cdot(m_tD_p H(x,D_xv,m_t))=0, & {\rm in}\ (0,T)\times\Rd,\\[3pt]
        v(T,x)=g(x,m_T),\ m_0=\Leb(\xi), & {\rm in}\ \Rd,
    \end{array}
\right.
\end{equation}
where $v:(0,T)\times\R^d\to\R$ denotes the value function of a typical agent, and $(m_t)_{t\in[0,T]}$ represents the flow of the distribution of the states of the agents.

\medskip

Since the inception of the theory one of the key problems is to rigorously justify the approximation of $N$-player games by corresponding MFGs. There are in general two methods of establishing this justification, the first of which initiated in \cite{MFGhuangcaines} is the construction of approximate Nash equilibria for the $N$-player games from the MF equilibrium strategy, the second of which, typically known as the {\it convergence problem} in the MFG literature, is to prove directly that Nash equilibria of $N$-player games converge to the MF equilibria. 

\medskip

As is well known, the notion of equilibria used in the $N$-player games impacts greatly the difficulty of the convergence problem. The qualitative convergence of open-loop equilibria has been established in \cite{Fischer2014convergenceofopenloopeq,LackerConvOpenLoop}, while for closed-loop equlibria \cite{LackerConvClosedLoop} proves that limit points of closed-loop equilibria are \say{weak MFG equilibria} and \cite{CirantRedaelli} proves via a priori estimates on the so-called \say{Nash system} the convergence of closed-loop equilibria to strong equilibria under monotonicity conditions. A qualitative convergence result for distributed equilibria in a separable cost framework was given already in the seminal work \cite{MFGlionslasry}; we note, however, that the authors did not emphasise greatly the terminology of \say{distributed equilibria}. Another notable paper is \cite{BardiLQMFG}, where explicit solutions for linear quadratic (LQ) MFGs and distributed equilibria for LQ $N$-player games were computed in the ergodic setting and the corresponding convergence results were shown explicitly; we note that the author did not use the terminology of distributed equilibria. The terminology of distributed controls may cause some confusion to arise, as it is used not only in the specific setting of $N$-player games/control problems (and their convergence to MFG/Mean Field Control) as it was noted in \cite{JacksonDistribuedMeanFieldControl} that \say{the term {\it distributed} is used broadly in the control theory literature, with a variety of different meanings.} 

\medskip

Quantitative convergence results, i.e. results with an explicit rate of convergence, are obtained for closed-loop equilibria in the seminal work \cite{masterequation}, via the \say{master equation}. For open-loop equilibria, quantitative convergence is proven in \cite{LauriereConvMFGC,jacksontangpi} with the method of \say{forward-backward propagation of chaos}, notably the former paper allows interaction through controls of other players and the latter allows controlled volatility. More precise quantitative convergence results were obtained in \cite{cirant2025nonasymptoticapproachstochasticdifferential}, where the authors further quantify the difference between open/closed-loop/distributed equilibria in a separable cost framework, without appealing to the master equation. The recent contribution \cite{jackson2025quantitativeconvergencedisplacementmonotone} quantifies the difference between open/closed-loop equilibria in a nonseparable cost framework, where in particular interactions through controls were also allowed. These methods were then pushed forward in \cite{JacMes:26} to build a solution to the corresponding master equations.

\medskip

\textit{MFGCs.} There has recently been an increasing interest in the theory of \say{Mean Field Games of controls} (MFGC) (also called extended MFGs), where compared to the initially introduced models in the theory, interactions are also via distributions of controls of players, in addition to via distributions of states of players. MFGCs were first introduced in \cite{Gomes2013EMFG2} under the terminology of extended MFGs. The theory of MFGCs often has wider applicability in economics modelling, e.g. \cite{TradingMFGC,RenweablesMFGC,CournotBertrandMFGC,AlasseurMFGCSmartGrids}. 

\medskip

The wellposedness of MFGCs with nonseparable cost functions (in the control distribution) is more difficult, as in addition to the usual HJB--KFP equations in the standard MFG system \eqref{standard MFG}, they are accompanied by the additional consistency relation 
\[\m_t=(\textnormal{Id},-D_pH(\cdot,D_xv(t,\cdot),\m_t))_\sharp m_t.\]
This is a fixed point relation set on infinite dimensional space $\prob(\Rd\times\Rd)$, nonetheless this has been well studied by now.

\medskip

The first wellposedness theory for the PDE system characterising the MFGC on the state space $\Rd$ was obtained in \cite{KobeissiMFGC2}, in the setting of semi-separability in state distribution. The author imposes a monotonicity condition of Lasry--Lions type (this has been observed in \cite{alparmouMFGunderDmonotone,GangboMeszarosPotentialMasterEqn} to be a completely different regime with the regime of displacement monotonicity; however, under suitable second derivative bounds, the Lasry--Lions monotonicity implies displacement semimonotonicity, c.f. \cite[Remark 2.8]{GangboMeszarosMouZhangMasterEqn}). On the torus $\mathbb{T}^d$, \cite{PotentialMFGC} proved wellposedness in a potential game setting and \cite{Kobeissi04032022} proved various general wellposedness results under smallness conditions.  All the references cited above rely on the \textit{uniform parabolic} setting coming from the stochastic framework. 

\medskip

\medskip

In the deterministic setting, \cite{GomesDeterministicMFGC} proves wellposedness for MFGCs, with uniqueness coming from a generalised Lasry–Lions type monotonicity. In addition, examples of explicitly solved MFGCs were given and the master equation was also discussed. Wellposedness for deterministic MFGCs were also obtained in \cite{graber2024newuniquenessresultsmean} for a model where the dependence on the distribution of state and controls is finite dimensional, the authors used a monotonicity which is different from both Lasry–Lions and displacement monotonicity (introduced in \cite{GraMes:23, GraMes:24}). 

\medskip

We would also like to mention that a wellposedness theorem for the FBSDE systems of Pontryagin type with nondegenerate idiosyncratic noise (hence also strong MFG equilibria in the presence of sufficient convexity assumptions) can be found in the appendix of \cite{jackson2025quantitativeconvergencedisplacementmonotone} in the setting of uniformly parabolic displacement semimontone MFGCs. We extend this result by showing that this naturally leads to a wellposedness theorem for MFGC PDE systems. Moreover, in the current paper we provide two avenues to obtain wellposedness of deterministic MFGCs in the setting of displacement semimonotonicity, the second of which is provided as a by-product of a vanishing viscosity result showing the convergence of uniform parabolic MFGC PDE systems to the corresponding first order MFGC PDE systems.

\medskip

Some other recent works on MFGCs include the paper \cite{graberMFGCFractionLaplacian}, where states of players can \say{jump}, leading to a fractional Laplacian instead of the usual Laplacian in the MFGC system and the paper \cite{graber2025MFGCBoundaryCOnditions}, where arbitrary (convex resp.) bounded state space was considered, complemented by Dirichlet (Neumann resp.) boundary conditions.  

\medskip

Probabilistic treatments of open- and closed-loop equilibria associated to MFGC have been developed in the series of papers \cite{ Dje:23-esaim, Dje:23-aap,  PosTan}. Instead of PDE systems, these papers focus on probabilistic aspects of MFG theory (c.f. \cite{carmonadelarue1}) extended to the MFGC setting (such as weak formulations\footnote{Weak as in the probabilistic sense, not PDE sense.} and BSDEs characterisations). In the former two papers, the author constructs approximate open- and closed-loop Nash equilibria and shows that these arise as a limit of so-called \say{measure-valued MFG equilirbia}, which is an extension of classical weak MFG equilibria to the MFGC setting. He then proves these to exist (but not necessarily unique). In the latter paper, the authors characterise weak MFG equilibria with a BSDE associated to the dynamic programming principle, for which they establish a well-posedness theory. They then further prove convergence for these weak equilibria. The aforementioned papers work under assumptions that guarantee the solvability of the consistency relation.

\medskip

Note that in the main text of the current paper we will no longer specify the terminology MFGCs and simply refer to MFGs. 

\medskip

\textit{N-Player Games.} As in their mean field counterparts, $N$-player games with interactions through controls are used to model problems arising in economics, we refer to the papers \cite{AlasseurMFGCSmartGrids,HarrisCournotNPlayerGames,Ledvina2010DynamicBO} for various interesting applications. 

\medskip

 However, to the best of our knowledge, an analytical treatment of $N$-player games with interactions through controls (that leads to a set of \say{non-trivial} consistency relations) 
 has so far only been done in \cite{jackson2025quantitativeconvergencedisplacementmonotone}, where the FBSDE/PDE systems describing open/closed-loop equilibria were derived. Through the analysis of the consistency relation, the authors in \cite{jackson2025quantitativeconvergencedisplacementmonotone} prove the existence of a \say{fixed point mapping} which, once proven to exist, effectively simplifies the equations to a somewhat usual form seen as in the literature of differential games without interaction through controls. Wellposedness results for open-loop and mean field equilibria are given, while closed-loop equilibria were assumed to be well-posed. The authors focused on the quantitative convergence problem for $N$-player games, the difference between open-loop equilibria for $N$-player games and mean field equilibria was first quantified and shown to vanish as $N\rightarrow\i$, the difference between open/closed-loop equilibria for $N$-player games was next quantified and also shown to vanish as $N\rightarrow\i$, consequently a quantitative convergence result for closed-loop equilibria was also obtained.

\medskip

The literature on an analytical treatment of PDE systems arising from distributed equilibria in $N$-player games is limited. Some initial results can be found in the seminal paper \cite{MFGlionslasry} in the setting of no interactions through controls and fully separable running costs. As we have already seen, due to the lift of the Hamiltonians to an infinite dimensional space, the analysis of these PDE systems are complicated in the setting of nonseparable running costs. To the best of our knowledge, our paper provides the first analytical treatment of these PDE systems in the setting of nonseparable running costs, with interactions through controls and nontrivial consistency relations.

\medskip

Let us emphasise that in the case of deterministic problems, there is an extra motivation for the comprehensive study of distributed/open-loop equilibria. Indeed, to the best of our knowledge, it remains a challenging open question to establish the well/ill-posedness of PDE systems associated to closed-loop equilibria, even for games with no interactions through controls (beyond linear quadratic or highly symmetric cases). Such PDE systems, usually referred to as Nash systems in the literature, degenerate to strongly coupled nonlinear systems of PDEs.

\medskip

Our paper, together with \cite{jackson2025quantitativeconvergencedisplacementmonotone}, provides a comprehensive treatment of the wellposedness and convergence problem of displacement semimonotone $N$-player games and the analysis of the corresponding mean field games both for stochastic and deterministic problems where players interact through the distribution of their controls (without controlled volatility), unconditional on the solvability of any consistency relations that arise.

\medskip

\subsection*{The description of our approach.} 
Although our results can be seen as a natural extension of the results from \cite{jackson2025quantitativeconvergencedisplacementmonotone} (towards the direction of the study of distributed equilibria, as in \cite{cirant2025nonasymptoticapproachstochasticdifferential}), a few obstacles have to be overcome and significant new ideas are needed for these.  Therefore, to tackle the difficulties arising from the distributed framework combined with the interaction through controls, we need to choose a different approach from the ones established previously in the literature.

\medskip

Firstly, unlike \cite{jackson2025quantitativeconvergencedisplacementmonotone} we do not analyse the consistency relation directly. This is due to the fact that in the distributed framework within the stochastic setting, one requires to \say{lift} the running cost to the functionals $\Lt^i$ defined over suitable infinite dimensional function spaces. This lifting procedure in general does not commute with the Legendre transform, therefore the resulting lifted Hamiltonian $\Ht^i$ is completely different from the classical Hamiltonian $H$, on which the fixed point mapping of \cite{jackson2025quantitativeconvergencedisplacementmonotone} is defined. This phenomenon is also due to the fact that the Hamiltonians/running costs in our setting are considered to be nonseparable, which was not the case in particular in \cite{cirant2025nonasymptoticapproachstochasticdifferential}.
 To bypass this obstacle, we perform a Schauder type argument on the feedback controls, instead of on the measure flow.

\medskip

Even though the deterministic setting does not require us to perform any lifting procedure, employing the fixed-point mapping of \cite{jackson2025quantitativeconvergencedisplacementmonotone} would result in coupling the HJB equations into a hardly trackable first order system, which would lead to a highly nontrivial analysis when freezing the measure flow. Therefore, we chose to follow similar Schauder type arguments, which are robust enough to work in the deterministic setting as well. An advantage of this approach --- compared to \cite{jackson2025quantitativeconvergencedisplacementmonotone} when it comes to establishing the existence of equilibria --- is that we do not need to consider $N$ to be large enough (hence $N\in\N$ can be arbitrary).

\medskip

The method of proving stability, uniqueness and quantitative convergence of equilibrium trajectory/control pairs is largely similar to the already established methods in \cite{jacksontangpi,jackson2025quantitativeconvergencedisplacementmonotone}. In addition to the quantitative convergence result for equilibrium trajectory/control pairs, relying on the maximum principle, we obtain a quantitative convergence result also for {\it value functions} of $N$-player games to MFGs. An interesting feature is that the convergence of the value functions is more precise than that of equilibrium trajectory/control pairs. In particular, for the latter the convergence result is in an average sense, i.e. the equilibrium trajectory/control pairs averaged over all $N$ players is shown to converge to the MF equilibrium trajectory/control pair as $N\rightarrow\i$, while for the former we can prove this convergence result in an individual sense.

\medskip

We note that although in the deterministic case distributed equilibria coincide with open-loop equilibria and therefore the convergence result in this case essentially coincides with that of \cite[Theorem 1.6]{jackson2025quantitativeconvergencedisplacementmonotone}, the existence of such equilibria in our work is novel, as the existence of MF equilibria for MFGCs and the existence of open-loop equilibria for corresponding $N$-player games in \cite{jackson2025quantitativeconvergencedisplacementmonotone} rely on the setting of nondegenerate idiosyncratic noise. To the best of our knowledge, the existence of open-loop equilibria for deterministic $N$-player games when agents interact through the distributions of their controls has not been established previously in the literature. 

\medskip

Interestingly, we can prove further convergence results in the case of deterministic  problems. We note that in the case of stochastic problems, due to the lack of comparison between the functions $D_pH$ and $D_p\Ht$, one is in general unable to obtain quantitative convergence results for the gradients of the value functions. This becomes tractable in the deterministic setting,  but it turns out to be far from trivial. We build an \say{intermediate system} which is some sort of interpolation between the MFGC PDE systems arising from stochastic and deterministic problems, respectively. We then derive bounds that are sustained in the vanishing viscosity limit and deduce the desired gradient convergence results. These estimates are also in an individual sense, leading to an immediate consequence: an individual convergence result on equilibrium trajectory/control pairs, instead of in an averaged sense.

\medskip

Lastly, we explicitly study some model problems in the linear quadratic (LQ) case, to showcase the sharpness of some of our standing assumptions. In particular, we construct a LQ--MFGC for which there are infinitely many solutions (hence MF equilibria) as soon as the displacement semimonotonicity condition is violated. Moreover, these examples show that removing a contractivity condition that we impose, would lead to an LQ problem with no quadratic solutions.

\medskip

\textit{Organisation of the rest of the paper.} The short Section \ref{sec:connection} shows the connection between the distributed equilibria and the corresponding PDE systems. Then, in Section \ref{section existence stochastic}, we prove the existence of solutions to the PDE system \eqref{DNash} in the uniformly parabolic setting, hence the existence of distributed equilibria to the stochastic $N$-player game. In Section \ref{section existence deterministic}, we prove the existence of solutions to the first order PDE system \eqref{DNash deterministic}, hence the existence of distributed equilibria in the absence of noise. In Section \ref{section uniqueness}, we prove stability and uniqueness of solutions to the PDE systems \eqref{DNash} and \eqref{DNash deterministic} under displacement semimonotonicity, which is also introduced in this section. Consequently, we present the last piece for wellposedness of distributed equilibria. Section \ref{section MFG} is dedicated to the MFG and to the convergence results. After recalling the setting and the well-posedness theory for the MFG, in Subsection \ref{section convergence} we quantify the difference between distributed equilibria of $N$-player games and MF equilibria of MFGs, showing that the difference vanishes as $N\rightarrow\i$, both at the level of equilibrium trajectory/control pairs and value functions. We end this section with a concentration inequality. In Section \ref{sec:vanishing} we present a vanishing viscosity argument and  the quantitative convergence of gradients of solutions to the PDE system \eqref{DNash deterministic} associated to the deterministic games. In Section \ref{LQ section}, we derive explicitly computable examples in the linear quadratic setting, both for the $N$-player games and the MFGs. Finally, for completeness and for the reader's convenience we end the paper with three short Appendix sections, where we recall some results (known or expected for experts) from convex analysis and nonlinear nonlocal continuity equations.

\medskip

\subsection*{Acknowledgement.}
The first author was supported by the Engineering and Physical Sciences Research Council [Grant Number EP/W524426/1]. The second author has been
supported by the EPSRC New Investigator Award ``Mean Field Games and Master equations'' under award no. EP/X020320/1.

\section{Connection Between Distributed Equilibria and PDE Systems}\label{sec:connection}

\subsection{Verification Theorem}

\begin{assumption}\ \label{Assumption Convexity}
    For all $i=1,\ldots,N$, assume the following holds.\begin{enumerate} 
    
        \item $\Rd\times\Rd\times(\Rd)^{N-1}\times(\Rd)^{N-1}\ni(x,a,y^{-i},A^{-i})\mapsto L^i(x,a,y^{-i},A^{-i})\in C^1$ are jointly convex functions of the variables $(x,a)$ for all $(y^{-i},A^{-i})$, strictly convex in the variable $a$ uniformly over the variables $(x,y^{-i},A^{-i})$.
        
        \item $\textnormal{There exists } C>0,\ \textnormal{such that}\ L^i(x,a,y^{-i},A^{-i})\geq\th(|a|)-C(1+|x|+\sum_{j\neq i}\left(|y^j|+|A^j|\right)$ where $\th:[0,\infty)\rightarrow[0,\infty)$ is superlinear at infinity.
        \end{enumerate}
        \end{assumption}
We will use the following lemma repeatedly throughout the paper.

\begin{lemma}\label{Dudley lemma}
Let $f:(\Rd)^N\to\R$ be a jointly measurable real function, $E^i_t$ be measurable with respect to $\sF^i_t$ such that $f(E^i_t,X^{-i}_t)\in L^1(\Om),$ where $X^{-i}_t$ are as in \eqref{PMP}, then
    \[\int_{(\Rd)^{N-1}} f(E^i_t,x^{-i})\ dm_t^{-i}(x^{-i})=\E\Bigl[f(E^i_t,X^{-i}_t) | \sF^i_t\Bigr]\]
\end{lemma}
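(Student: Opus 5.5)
The identity is the standard "freezing" lemma for conditional expectations under independence. I will prove it by applying the freezing lemma for independent sub-$\sigma$-algebras, where the key input is that $X^{-i}_t$ is independent of $\sF^i_t$ and has law $m^{-i}_t$.

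\emph{Step 1 (independence and law of $X^{-i}_t$).} By construction, $X^j_t$ solves the SDE driven by $\xi^j$ and $B^j$ with the feedback $\a^j$, and this SDE is strongly wellposed since $\a^j\in\A^{dist}$. Hence $X^j_t$ is measurable with respect to the augmented $\sigma$-algebra $\sF^j_T$, generated by $(\xi^j,B^j)$. The families $(\xi^j,B^j)_{j=1,\dots,N}$ are mutually independent. Therefore the vector $X^{-i}_t=(X^j_t)_{j\neq i}$ is independent of $\sF^i_t\subset\sF^i_T$, and its components are mutually independent. Since $m^j_t=\Leb(X^j_t)$ (the KFP equation is solved by the law of the SDE, for instance by superposition or uniqueness for Lipschitz drifts), the joint law of $X^{-i}_t$ is the product $\prod_{j\neq i}m^j_t=m^{-i}_t$.

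\emph{Step 2 (freezing for bounded product functions).} Take $f(x^i,x^{-i})=\mathds{1}_{A}(x^i)\mathds{1}_{B}(x^{-i})$ and any $G\in\sF^i_t$. Independence gives
\[
\E\bigl[\mathds{1}_G\,\mathds{1}_A(E^i_t)\,\mathds{1}_B(X^{-i}_t)\bigr]=\E\bigl[\mathds{1}_G\,\mathds{1}_A(E^i_t)\bigr]\,m^{-i}_t(B).
\]
The right-hand side equals $\E\bigl[\mathds{1}_G\,\varphi(E^i_t)\bigr]$ with $\varphi(y):=\int f(y,x^{-i})\,dm^{-i}_t(x^{-i})$, and $\varphi(E^i_t)$ is $\sF^i_t$-measurable. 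This proves the claim for such product indicators. A monotone class ($\pi$--$\lambda$) argument then extends it to all bounded jointly measurable $f$ on $(\Rd)^N$. Joint measurability of $y\mapsto\varphi(y)$ follows from Fubini–Tonelli.

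\emph{Step 3 (integrable case).} Split $f=f^+-f^-$ and apply Step 2 to the truncations $f^\pm\wedge n$, then pass to the limit by monotone convergence for conditional expectations. Taking $G=\Om$ shows $\E[\varphi^\pm(E^i_t)]=\E[f^\pm(E^i_t,X^{-i}_t)]<\infty$. Hence $\varphi^\pm(E^i_t)$ is finite almost surely, the integral $\int f(E^i_t,x^{-i})\,dm^{-i}_t$ is well defined almost surely, and subtracting the two identities gives the result.

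The only delicate point is Step 1, namely identifying $\Leb(X^{-i}_t)$ with $m^{-i}_t$ and checking the independence. This relies on strong wellposedness, which makes $X^j$ adapted to the filtration of $(\xi^j,B^j)$ rather than to a larger one, and on uniqueness of the KFP equation, which holds for spatially Lipschitz drifts as in Definition \ref{def classical solution}. Steps 2 and 3 are routine measure theory.
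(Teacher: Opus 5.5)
Your proposal is correct and takes the same route as the paper. The paper's proof only notes that $X^j_t$ is independent of $\sF^i_t$ for $j\neq i$ and cites the freezing-lemma exercise in Dudley (Section 10.1, Problem 9); your Steps 2--3 reprove that exercise via a monotone class argument and truncation, and your Step 1 is slightly more careful than the paper, since it makes explicit that the whole vector $X^{-i}_t$ (not just each $X^j_t$ separately) is independent of $\sF^i_t$ with law equal to the product $m^{-i}_t$, using the mutual independence of the families $(\xi^j,B^j)$ and uniqueness for the KFP equation.
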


\begin{proof}
    $X^j_t$ is independent of $\sF^i_t$ for $j\neq i$, use \cite[Section 10.1, Problem 9]{Dudley_2002}. 
\end{proof}

\begin{theorem}\label{Verification thm}
Suppose Assumption \ref{Assumption Convexity} holds, let $(w,m,\a)$ be a solution to \eqref{DNash}, then $\a\in(\A^{dist})^N$ defines a set of distributed equilibrium strategies.
\end{theorem}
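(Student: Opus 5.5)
Fix $i\in\{1,\dots,N\}$ and freeze the other players' strategies $\a^{-i}$, so that the flows $m^{-i}_t=\Leb(X^{-i}_t)$ are fixed and independent of $\sF^i_t$. The plan is a classical verification argument for a single-agent control problem with the lifted running cost $\Lt^i(\cdot,\cdot,m^{-i}_t,\a^{-i}_t)$ and terminal cost $\gt^i(\cdot,m^{-i}_T)$. By Remark \ref{remark 1}, it suffices to compare against an arbitrary $(\sF^i_t)$-progressively measurable control $u_t$ (feedback controls in $\A^{dist}$ induce such processes). The first step rewrites the cost. Let $X^u$ solve $dX^u_t=u_t\,dt+\sqrt{2\b}\,dB^i_t$, $X^u_0=\xi^i$. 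Since $X^u_t$ and $u_t$ are $\sF^i_t$-measurable while $X^{-i}_t$ is independent of $\sF^i_t$, Lemma \ref{Dudley lemma} and the tower property give
\[
J^i(u;\a^{-i})=\E\Bigl[\int_0^T\Lt^i(X^u_t,u_t,m^{-i}_t,\a^{-i}_t)\,dt+\gt^i(X^u_T,m^{-i}_T)\Bigr].
\]
The same identity holds for $u=\a^i(t,X^i_t)$. Integrability is handled with Assumption \ref{Assumption Convexity}(2): if the cost is $+\infty$ there is nothing to prove, and otherwise the lower bound ensures the conditional expectations make sense.

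The second step applies Itô's formula to $w^i(t,X^u_t)$, where $w^i\in C^{1,2}$ (for $\b=0$ the chain rule with $w^i\in C^1$ suffices). Using the HJB equation, we obtain
\[
\E[\gt^i(X^u_T,m^{-i}_T)]-\E[w^i(0,\xi^i)]=\E\int_0^T\bigl(D_xw^i\cdot u_t-\Ht^i(X^u_t,D_xw^i,m^{-i}_t,\a^{-i}_t)\bigr)dt,
\]
assuming the stochastic integral has zero mean. By the definition of $\Ht^i$ as a maximum, $-\Ht^i(x,p,\cdot)\le p\cdot a+\Lt^i(x,a,\cdot)$ for every $a$. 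Hence $J^i(u;\a^{-i})\ge\E[w^i(0,\xi^i)]$. For $u_t=\a^i(t,X^i_t)$, the consistency relation $\a^i=-D_p\Ht^i$ shows that $\a^i$ attains the maximum in the definition of $\Ht^i$, so equality holds. This requires convex duality for the lifted cost: $\Lt^i$ is $C^1$ and strictly convex in $a$ because integration preserves strict convexity of $L^i$, and it is superlinear, so $\Ht^i$ is differentiable in $p$ with $-D_p\Ht^i(x,p)$ equal to the unique maximiser (the appendix convex-analysis facts). It follows that $J^i(\a)=\E[w^i(0,\xi^i)]\le J^i(u;\a^{-i})$.

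The final step checks admissibility. Since $\a^i\in C([0,T];W^{1,\infty})$ (respectively Lipschitz) is Lipschitz in space, the state SDE is strongly well posed, so $\a\in(\A^{dist})^N$.

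The main obstacle is the localisation and integrability in the Itô step: $w^i$ and $D_xw^i$ may grow, and $u$ is only assumed to give finite cost. I would first restrict to controls with $\E\int|u|^2<\infty$, using stopping times $\tau_R$ (exit from balls) so that the stochastic integral is a true martingale. I would then pass $R\to\infty$ using the superlinear lower bound of Assumption \ref{Assumption Convexity}(2) together with Fatou's lemma on the cost side and the growth of $w^i$ on the other. If needed, controls with infinite $L^2$ norm would be handled by approximation or excluded directly by the coercivity of $\th$.
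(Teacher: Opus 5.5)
Your proposal is correct and follows the paper's proof. The paper also applies Itô's formula to $w^i$ along the deviating trajectory, uses the HJB equation and the inequality $\Ht^i(x,p,\cdot)\ge -p\cdot a-\Lt^i(x,a,\cdot)$ with equality at the consistency-relation feedback, and uses Lemma~\ref{Dudley lemma} with the tower property to pass between lifted and unlifted costs. Applying the lemma before Itô rather than after, allowing general $(\sF^i_t)$-progressive competitors (as Remark~\ref{rem:4} notes the argument permits), and adding localisation and the admissibility check are refinements of the same argument.

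One sign slip needs fixing. Itô's formula combined with the HJB equation gives $\E[\gt^i(X^u_T,m^{-i}_T)]-\E[w^i(0,\xi^i)]=\E\int_0^T\bigl(D_xw^i\cdot u_t+\Ht^i\bigr)\,dt$, with $+\Ht^i$ rather than $-\Ht^i$. Only with this sign does your inequality $-\Ht^i\le p\cdot a+\Lt^i$ yield $J^i(u;\a^{-i})\ge\E[w^i(0,\xi^i)]$.
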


\begin{proof}

Let $u\in\A^{dist}$ be any alternative strategy the $i$-th player deviates to, denote the associated trajectory as $\Xt^i$. We denote as $X^j$ the trajectories of the $j$-th player, $j\neq i$, associated to $\a^j$.

\medskip

Applying It\^o's formula on $w^i(t,\Xt^i_t)$ and using the HJB equation gives
\begin{align*}
    \E\Bigl[w^i(0,\xi^i)\Bigr]&=\E\left[\gt^i(\Xt^i_T,m^{-i}_T)-\int^T_0 \left(\Ht^i(\Xt^i_t,D_xw^i(t,\Xt^i_t),m^{-i}_t,\a^{-i}_t)+D_xw^i(t,\Xt^i_t)\cdot u(t,\Xt^i_t)\right)\ dt\right]\\
    &\leq \E\left[\gt^i(X^i_T,m^{-i}_T)+\int^T_0 \Lt^i(\Xt^i_t,u(t,\Xt^i_t),m^{-i}_t,\a^{-i}_t) dt\right]
\end{align*}
with equality if and only if $u=\a^i$. Indeed, from the consistency relation in \eqref{DNash} we have
\[\a^i(t,\Xt^i_t)=-D_p\Ht^i(\Xt^i_t,D_xw^i(t,\Xt^i_t),m^{-i}_t,\a^{-i}_t),\]
as $-D_p\Ht^i$ maximises the Hamiltonian $\Ht^i$ uniquely (c.f. \cite[(A.25)]{CanSin} and Remark \ref{coercivity remark}), we have for all $a\in\Rd$
\[\Ht^i(\Xt^i_t,D_xw^i(t,\Xt^i_t),m^{-i}_t,\a^{-i}_t)\geq -D_xw^i(t,\Xt^i_t)\cdot a -\Lt^i(\Xt^i_t,a,m^{-i}_t,\a^{-i}_t) \]
with equality if and only if $a=-D_p\Ht^i(\Xt^i_t,D_xw^i(t,\Xt^i_t),m^{-i}_t,\a^{-i}_t)$.

\medskip

By Lemma \ref{Dudley lemma} and the tower law of conditional expectations, we further deduce that 
\begin{align*}
\E\Bigl[w^i(0,\xi^i)\Bigr]&\leq\E\left[g^i(X^i_T,X^{-i}_T)+\int^T_0 L^i(X^i_t,u(t,\Xt^i_t),X^{-i}_t,\a^{-i}(t,X^{-i}_t)) dt\right]\\
&=J^i(u;\a^{-i}).
\end{align*}
with equality if and only if $u=\a^i$, i.e.
\[J^i(\a)\leq J^i(u;\a^{-i}).\]\end{proof}
\begin{remark}\label{rem:4}
    While in the computations in the proof of Theorem \ref{Verification thm} up to the use of Lemma \ref{Dudley lemma} are still valid if we replace $u\in\A^{dist}$ by arbitrary $\sF_t$-adapted processes (i.e. open-loop strategies), we used the fact that  $u(t,X^i_t)$ is in particular $\sF^i_t$-adapted to involve Lemma \ref{Dudley lemma} to go from the lifted running cost $\Lt$ to $L$.
\end{remark}

\subsection{Fully Deterministic Problems}\label{discussion}

\medskip

\begin{theorem}
    Suppose Assumption \ref{Assumption Convexity} holds, let $(w,X,\a)$ be a solution to \eqref{DNash deterministic}, then $\a\in(\A^{dist})^N$ defines a set of distributed equilibrium strategies. Moreover, for all $u\in L^1([0,T])$ 
    \[J^i(\a)\leq J^i(u;\a^{-i})\]
    where the cost functionals are extended in a natural way to \say{open-loop} strategies of the form $u\in L^1([0,T])$. That is, $\a$ induces a set of open-loop equilibrium strategies.
\end{theorem}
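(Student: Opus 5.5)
The plan is to obtain both claims by the same verification argument as in Theorem \ref{Verification thm}, now along deterministic trajectories. Since $\b=0$ and the initial data are deterministic points $z^i$, the measures $m^j_t=\d_{X^j_t}$ are Dirac masses. The lifted quantities therefore reduce to the unlifted ones: $\Lt^i(x,a,m^{-i}_t,\a^{-i}_t)=L^i(x,a,X^{-i}_t,\a^{-i}(t,X^{-i}_t))$ and $\Ht^i=H^i$ evaluated at $(X^{-i}_t,\a^{-i}(t,X^{-i}_t))$. Consequently the first claim, that $\a$ is a distributed equilibrium, is simply Theorem \ref{Verification thm} applied to the system \eqref{DNash} with these Dirac flows. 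I would record this explicitly rather than redo it. No use of Lemma \ref{Dudley lemma} is needed, since the expectation is trivial.

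For the open-loop claim, fix $i$ and an arbitrary $u\in L^1([0,T])$, and let $\Xt^i_t=z^i+\int_0^t u_s\,ds$. This is absolutely continuous. The other players keep their feedback controls $\a^j$, so their trajectories $X^j$ are the same deterministic paths as in the equilibrium. This is the key point: with no noise, a deviation of player $i$ cannot affect $X^{-i}$, because each $X^j$ solves its own ODE driven only by $\a^j$. Hence the frozen quantities $X^{-i}_t$ and $\a^{-i}(t,X^{-i}_t)$ are fixed functions of time. We may also assume $\int_0^T L^i(\Xt^i_t,u_t,\dots)\,dt<\infty$, since otherwise there is nothing to prove.

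Since $w^i\in C^1$ and $\Xt^i$ is absolutely continuous, the chain rule for $t\mapsto w^i(t,\Xt^i_t)$ holds a.e. Integrating it and using the HJB equation gives
\begin{equation*}
w^i(0,z^i)=g^i(\Xt^i_T,X^{-i}_T)-\int_0^T\Bigl(H^i(\Xt^i_t,D_xw^i(t,\Xt^i_t),X^{-i}_t,\a^{-i}(t,X^{-i}_t))+D_xw^i(t,\Xt^i_t)\cdot u_t\Bigr)dt .
\end{equation*}
The Fenchel inequality $H^i(x,p,\cdot)\ge -p\cdot a-L^i(x,a,\cdot)$ then yields $w^i(0,z^i)\le J^i(u;\a^{-i})$. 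Equality holds for the equilibrium, because of the consistency relation $\a^i=-D_pH^i(x,D_xw^i,\dots)$ and the uniqueness of the maximiser, as in the proof of Theorem \ref{Verification thm} (via Remark \ref{coercivity remark}). This gives $J^i(\a)=w^i(0,z^i)\le J^i(u;\a^{-i})$.

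The main obstacle is justifying integrability so that $J^i(u;\a^{-i})$ is well defined for $u\in L^1$ and the integral identity above is legitimate. The lower bound in Assumption \ref{Assumption Convexity}(2) ensures the negative part of $L^i$ is integrable, since $\Xt^i$, $X^{-i}$ and $\a^{-i}(t,X^{-i}_t)$ are bounded on $[0,T]$. Thus $J^i$ takes values in $(-\infty,+\infty]$. The terms $H^i(\Xt^i_t,D_xw^i,\dots)$ and $D_xw^i\cdot u_t$ are integrable because $D_xw^i$ and $H^i$ are continuous on the compact range of $(t,\Xt^i_t)$ and $u\in L^1$.
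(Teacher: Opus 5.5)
Your proposal is correct and follows the paper's own argument. The first claim is Theorem \ref{Verification thm} specialised to Dirac flows. The open-loop claim is the same chain-rule computation along an absolutely continuous deviation $\Xt^i$, combined with the Fenchel inequality; Lemma \ref{Dudley lemma} is not needed, since nothing is lifted and player $i$'s deviation leaves $X^{-i}$ unchanged. Your remarks on the chain rule for $C^1$ functions along absolutely continuous paths, and on integrability via the lower bound in Assumption \ref{Assumption Convexity}, just make explicit what the paper's sketch leaves implicit.
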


\begin{proof}[Sketch of proof]
    This proof follows the same steps as the one of Theorem \ref{Verification thm}. First we note that due to the fully deterministic setting we no longer have expectations nor lifted functions $\gt,\Lt,\Ht$. As noted already in the Remark \ref{rem:4}, the computations in the proof of Theorem \ref{Verification thm} with $u\in\A^{dist}$ replaced by arbitrary measurable functions $u:[0,T]\rightarrow\Rd$ are still valid up to the use of Lemma \ref{Dudley lemma}. Moreover, as we do not need to use the lifted running costs and hence there is no need to invoke Lemma \ref{Dudley lemma}, the computations are in fact valid throughout. 
\end{proof}

One should take care in that in general open-loop equilibria do not necessarily consist of open-loop strategies induced by feedback functions.

\medskip

Open-loop equilibria are also characterised by the Pontrayagin maximum principle. In particular, when $\b=0$ and $\xi^i=z^i$, \eqref{PMP} becomes

\begin{equation}\label{Deterministic PMP}
    \begin{cases}
        X^i_t=z^i-\displaystyle\int^t_0D_pH^i(X^i_s,Y^i_s,X^{-i}_s,\a^{-i}(s,X^{-i}_s))\ ds, &\textnormal{in }[0,T]\\
        Y^i_t=D_xg^i(X^i_T,X^{-i}_T)-\displaystyle \int^T_tD_xH^i(X^i_s,Y^i_s,X^{-i}_s,\a^{-i}(s,X^{-i}_s))\ ds, &\textnormal{in }[0,T].
        \end{cases}
\end{equation}
The system \eqref{Deterministic PMP} is formally connected to a PDE system, which describes the so-called \say{decoupling field} $u:[0,T]\times(\Rd)^N\rightarrow(\Rd)^N$, c.f. ($\textnormal{PS}_{\textnormal{PDE}}$) in \cite{cirant2025nonasymptoticapproachstochasticdifferential},
\begin{equation}\label{decoupling PDE}
    \begin{cases}
        -\p_t u^i+D_xH^i(x^i,u^i,x^{-i},\a^{-i}(t,x^{-i}))+\sum^N_{j=1}D_{x^j}u^iD_pH^j(x^j,u^j,x^{-j},\a^{-j}(t,x^{-j}))=0, &\textnormal{in }(0,T)\times(\Rd)^N\\
        u^i(T,x)=D_xg^i(x^i,x^{-i}), &\textnormal{in }(\Rd)^N.
    \end{cases}
\end{equation}
If $u$ is a classical solution to \eqref{decoupling PDE}, then the solution to \eqref{Deterministic PMP} takes the form $Y^i_t=u^i(t,X_t)$.

\medskip

The decoupling field system \eqref{decoupling PDE} is related to the PDE system \eqref{DNash deterministic} we have considered. In particular, consider \eqref{DNash deterministic} but with initial condition $(t_0,z)\in[0,T]\times (\Rd)^N$, i.e.

\begin{equation}\label{DNash deterministic3}
    \begin{cases}
        -\p_tw^i+H^i(x,D_xw^i,X^{-i}_t,\alb^{-i}(t,X^{-i}_t))=0, &\textnormal{in }(0,T)\times\Rd\\
        \dot{X}^i_t=\alb^i(t,X^i_t), &\textnormal{in }(0,T)\\
    w^i(T,x)=g^i(x,X^{-i}_T),\ X^i_{t_0}=z^i, &\textnormal{in }\Rd
    \end{cases}
\end{equation}
which we assume to be wellposed here, then formally $u^i(t_0,z)=D_xw^i(t_0,z^i)$, where $w$ solves \eqref{DNash deterministic3}. Of course, to make this rigorous one would need suitable a priori estimates for $u.$ We will not consider any such justification in the current paper.

\medskip

This is in the same philosophy as the relation between the usual MFG system and the master equation (c.f. \cite[Section 1.4.3]{cardaliaguetMFG}), the solutions to \eqref{DNash deterministic3} can be considered as characteristics of the quasilinear PDE system \eqref{decoupling PDE}. 

\section{Existence in the Stochastic Case}\label{section existence stochastic}

Consider first the case $\b>0$, we need to make some assumptions on the set of data and the functions  $L^i:\Rd\times\Rd\times(\Rd)^{N-1}\times(\Rd)^{N-1}\rightarrow\R$, $g^i:\Rd\times(\Rd)^{N-1}\rightarrow\R$.
\begin{assumption}\ \label{Assumption L}
    For all $i=1,\ldots,N$, assume the following holds.\begin{enumerate} 
    \item $\m^i\in\prob_2(\Rd)$,  i.e. $\int_\Rd|x|^2\ d\m^i(x)<\i.$
    
        \item $(x,a,y^{-i},A^{-i})\mapsto L^i(x,a,y^{-i},A^{-i})\in C^1$ are (jointly) convex functions of the variables $(x,a)$ for all $(y^{-i},A^{-i})$.
        
        \item $\textnormal{There exists } C>0$ such that $L^i(x,a,y^{-i},0)\geq\th(|a|)-C$ where $\th:[0,\infty)\rightarrow[0,\infty)$ is superlinear at infinity.

        \item $\textnormal{There exists }\l_{i,\max}\geq\l_{i,\min}>0$ such that $\l_{i,\min}I\leq D^2_{aa}L^i\leq\l_{i,\max}I$ uniformly over all variables.

        \item $L^i$ are Lipschitz continuous and semiconcave in the variable $x$ uniformly over all other variables. Moreover $L^i$ have quadratic growth in the variable $y^{-i}$. To be precise, for all $R>0,\ |x|, |a|,|A^{-i}|\leq R,\textnormal{there exists }   C_R>0$ such that $ |L^i(x,a,y^{-i},A^{-i})|\leq C_R(1+\sum_{j\neq i}|y^{j}|^2)$. 

        \item $g^i\in C^1$ are 
        convex, Lipschitz continuous and semiconcave in the variable $x$ uniformly.

        \item $D_aL^i\in C^1$ are Lipschitz continuous in all variables uniformly.

        \item $D_xL^i$ are bounded in the variable $x$ uniformly and have quadratic growth in the variable $y$. To be precise, for all $R>0,\ x\in\Rd, |a|,|A|\leq R,\textnormal{there exists }   C_R>0$ such that $ |D_xL^i(x,a,y^{-i},A^{-i})|\leq C_R(1+\sum_{j\neq i}|y^j|^2)$

        \item $\l^{-1}_{i,\min}\sum_{j\neq i}\norm{D^2_{A^ja}L^i}_{\infty,\op}<1$, where $\l_{i,\min}$ are the same as in 4. 
    \end{enumerate}
\end{assumption}

We denote the set of data as 
\[\textnormal{data}=\{T,d,L,g,\b,\m,p,\l\}.\]
Throughout this section we impose Assumption \ref{Assumption L} without explicitly mentioning this again. Notice that most properties of $L^i$ with respect to the first two variables are directly transferable to $\Lt^i$ by linearity. Examples of running costs $L^i$ satisfying all the assumptions above will be given later (in the symmetric setting) in Example \ref{example}.

\begin{remark}
    If $B:\mathscr{X}\rightarrow \mathscr{B(Y,Z)}$, where $\mathscr{X,Y,Z}$ are Banach spaces, by $\norm{B}_{\infty,\op}$ (as in condition 9 of Assumption \ref{Assumption L}) we mean $\sup_{\mathscr{X}}\norm{B(\cdot)}_{\op}$, where $\norm{\cdot}_\op$ is the operator norm on $\mathscr{B(Y,Z)}$, the space of bounded linear operators from $\mathscr{Y}$ to $\mathscr{Z}$. 
\end{remark}

\begin{remark}
    A key assumption is the contractivity condition 9 in Assumption \ref{Assumption L}, as we will see later this ensures that $D_p\Ht^i$ is a contraction with respect to the last variable. This contraction property has been observed already in \cite[Section 3.1]{GomesDeterministicMFGC} to help with the solvability of the consistency relation arising in the mean field setting. However, as mentioned in the introduction, we will use this contractive property in a different manner as we will not aim to analyse the consistency relation directly.
\end{remark}

\begin{lemma}\label{lemma regularity L}
    $(x,a,m,f)\mapsto D_a\Lt^i(x,a,m,f)$ are uniformly Lipschitz continuous in the variables $(x,a,f)$ and in the variable $m$ whenever $f\in (W^{1,\infty}(\Rd))^{N-1}$, where the topology in the last two variables are taken to be the product topology generated by the 2-Wasserstein topology and the uniform topology, respectively. 
    
    \medskip
    
    In turn, $(x,p,m,f)\mapsto D_p\Ht^i(x,p,m,f)$ are uniformly Lipschitz in the variables $(x,p,f)$ and in $m$ whenever $f\in (W^{1,\i}(\Rd))^{N-1}$. In particular, the uniform Lipschitz constant of $D_p\Ht^i$ in the $f$ variable is given by $\l^{-1}_{i,\min}\norm{D^2_{Aa}L^i}_{\infty,\op}.$ 
\end{lemma}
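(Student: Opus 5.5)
The plan has two stages. First I will prove the Lipschitz estimates for $D_a\Lt^i$ by differentiating under the integral sign. Then I will transfer them to $D_p\Ht^i$ through the Legendre duality $D_a\Lt^i(x,-D_p\Ht^i,m,f)=-p$, using the implicit function theorem and the uniform strong convexity in $a$.

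\textbf{Step 1 (regularity of $D_a\Lt^i$).} Condition 7 of Assumption \ref{Assumption L} gives linear growth of $D_aL^i$, and $m^j\in\prob_2$. Hence dominated convergence yields
\[D_a\Lt^i(x,a,m^{-i},f^{-i})=\int D_aL^i\bigl(x,a,x^{-i},f^{-i}(x^{-i})\bigr)\,dm^{-i}(x^{-i}).\]
\begin{itemize}
\item \emph{Variables $x$ and $a$.} Lipschitz continuity with the same constant as $D_aL^i$ is immediate.
\item \emph{Variable $f$.} Integrating the pointwise bound
\[\bigl|D_aL^i(\dots,f(x^{-i}))-D_aL^i(\dots,\bar f(x^{-i}))\bigr|\le \sum_{j\neq i}\norm{D^2_{A^ja}L^i}_{\infty,\op}\,\norm{f^j-\bar f^j}_\infty\]
gives Lipschitz continuity in the uniform topology. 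This also pins down the sharp constant $\norm{D^2_{Aa}L^i}_{\infty,\op}$, understood blockwise as in condition 9.
\item \emph{Variable $m$.} Fix $f\in (W^{1,\infty})^{N-1}$. The integrand $x^{-i}\mapsto D_aL^i(x,a,x^{-i},f(x^{-i}))$ is then Lipschitz with constant $\Lip(D_aL^i)(1+\max_j\norm{Df^j}_\infty)$. The product measures $m^{-i},\bar m^{-i}$ are coupled by the product of optimal couplings $\pi^j$ of $(m^j,\bar m^j)$. Integrating against this coupling and applying Cauchy--Schwarz bounds the difference by $C\sum_{j\ne i}W_1(m^j,\bar m^j)\le C\sum_{j\neq i} W_2(m^j,\bar m^j)$.

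This is where $f\in W^{1,\infty}$ is genuinely needed: for merely bounded continuous $f$, the composition is not Lipschitz in $x^{-i}$ and Wasserstein continuity fails.
\end{itemize}

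\textbf{Step 2 (passing to $\Ht^i$).} By linearity of the integral, $\l_{i,\min}I\le D^2_{aa}\Lt^i\le \l_{i,\max}I$. So $\Lt^i$ is uniformly strongly convex and superlinear in $a$. The maximiser $a^*(x,p,m,f)$ in the definition of $\Ht^i$ therefore exists and is unique, and is characterised by $D_a\Lt^i(x,a^*,m,f)=-p$. By the envelope theorem (or the convex-analysis facts in the Appendix / \cite[(A.25)]{CanSin}), $D_p\Ht^i=-a^*$.

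Let $\theta,\bar\theta$ denote two parameter tuples, with maximisers $a^*,\bar a^*$. Write
\[D_a\Lt^i(a^*,\theta)-D_a\Lt^i(\bar a^*,\theta)=D_a\Lt^i(\bar a^*,\bar\theta)-D_a\Lt^i(\bar a^*,\theta)+(\bar p-p).\]
Pair this with $a^*-\bar a^*$ and use strong monotonicity of $D_a\Lt^i(\cdot,\theta)$. This gives
\[\l_{i,\min}|a^*-\bar a^*|\le |p-\bar p|+\bigl|D_a\Lt^i(\bar a^*,\theta)-D_a\Lt^i(\bar a^*,\bar\theta)\bigr|.\]
Inserting the Step 1 bounds proves Lipschitz continuity of $D_p\Ht^i$:
\begin{itemize}
\item in $p$, with constant $\l_{i,\min}^{-1}$;
\item in $x$ and $f$, with constant $\l_{i,\min}^{-1}$ times the corresponding constant for $D_a\Lt^i$;
\item in $m$, for $f\in W^{1,\infty}$, by the same factor.
\end{itemize}
In particular the $f$-constant is $\l^{-1}_{i,\min}\norm{D^2_{Aa}L^i}_{\infty,\op}$, as claimed. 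This is $<1$ under condition 9, which is what later makes $D_p\Ht^i$ a contraction in its last variable.

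\textbf{Main obstacle.} The delicate point is the $m$-dependence, and in particular the correct product coupling on $(\Rd)^{N-1}$. The constant must be tracked through the Lipschitz norms of $f^j$, so Lipschitz continuity in $m$ is uniform only on sets where $\norm{Df}_\infty$ is bounded. For this reason the statement restricts to $f\in (W^{1,\infty})^{N-1}$, and I would state the constant explicitly as $C(1+\max_j\norm{Df^j}_\infty)$.
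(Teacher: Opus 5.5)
Your proof is correct. Step 1 is essentially the paper's argument. The paper bounds the $m$-dependence by $C\,W_1(\otimes_{j\neq i}m^j,\otimes_{j\neq i}\bar m^j)\le C\bigl(\sum_{j\neq i}W_2^2(m^j,\bar m^j)\bigr)^{1/2}$ with $C=C(\Lip(f))$, which is your product-coupling computation in different packaging.

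Step 2 takes a genuinely different route.
\begin{itemize}
\item \emph{The paper's route.} It asserts $D_a\Lt^i\in C^1$ (Fr\'echet in $f$, intrinsically in $m$ after an $L^2$-lift). It applies the Banach-space implicit function theorem to $-p-D_a\Lt^i(x,-D_p\Ht^i,m^{-i},f^{-i})=0$ to get $D_p\Ht^i\in C^1$. It then differentiates this identity in $f^j$ to obtain $D^2_{aa}\Lt^i\,D^2_{f^jp}\Ht^i=-D^2_{f^ja}\Lt^i$, hence $\norm{D^2_{f^jp}\Ht^i}_{\infty,\op}\le\l^{-1}_{i,\min}\norm{D^2_{A^ja}L^i}_{\infty,\op}$. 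The other variables are treated ``similarly''.
\item \emph{Your route.} The strong-monotonicity inequality $\l_{i,\min}|a^*-\bar a^*|\le|p-\bar p|+|D_a\Lt^i(\bar a^*,\theta)-D_a\Lt^i(\bar a^*,\bar\theta)|$ gives the same constants from the Step 1 Lipschitz bounds alone. It needs no differentiability, no implicit function theorem and no lifting.
\end{itemize}

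Besides being more elementary, your route is more robust in the measure variable. For $f$ merely in $(W^{1,\infty}(\Rd))^{N-1}$, the map $m\mapsto D_a\Lt^i(x,a,m,f)$ is Lipschitz but in general not intrinsically differentiable. This is exactly the situation in the application, where $f=\bar\alpha^{-i}_t$ is only Lipschitz in space. A counterexample is $f^j$ with a kink at a point where $m^j$ has an atom. So ``differentiating the optimality condition in $m$'' is not literally available, whereas your estimate applies verbatim.

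The paper's route would in exchange give genuine $C^1$ regularity of $D_p\Ht^i$. However, later sections only use $D^2_{f^jp}\Ht^i$ and $D^2_{m^{-i}p}\Ht^i$ as placeholders for Lipschitz constants, so your version suffices.

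One small correction to your closing remark. For $f\in(C_b(\Rd))^{N-1}$, what fails is the Lipschitz bound in $m$, not continuity. The integrand $x^{-i}\mapsto D_aL^i(x,a,x^{-i},f(x^{-i}))$ is still continuous with linear growth, so $W_2$-convergence of the $m^j$ still yields convergence of $D_a\Lt^i$.
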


\begin{proof}
    From condition 7 in Assumption \ref{Assumption L} we see that $D_a\Lt^i(x,a,m,f)$ are uniformly Lipschitz continuous in the variables $(x,a)$. We show the remainder of the statement,
    \begin{equation}\label{D_fa Lt}|D_a \Lt^i (x,a,m^{-i},f^{-i})-D_a\Lt^i(x,a,m^{-i},\overline{f}^{-i})|\leq\sum_{j\neq i}\ \norm{D^2_{A^ja}L^i}_{\infty,\op} \norm{f^j-\overline{f}^j}_\infty\end{equation}
and
    \begin{align*}
        |D_a \Lt^i (x,a,m^{-i},f)-D_a\Lt^i(x,a,\mb^{-i},f)|&\leq CW_1(\otimes_{j\neq i}m^j,\otimes_{j\neq i}\mb^j)\\
        &\leq CW_2(\otimes_{j\neq i}m^j,\otimes_{j\neq i}\mb^j)\\
        &\leq C\left(\sum_{j\neq i}W_2^2(m^j,\mb^j)\right)^{\frac{1}{2}}
        \end{align*}
        where $f\in (W^{1,\i}(\Rd))^{N-1}$, the constant $C=C(\norm{D^2_{y^{-i}a}L^i}_{\infty,\op},\ \Lip(f))$ and the Wasserstein distances $W_r$, $r=1,2$, in the first two lines are taken on the space $\prob_2((\Rd)^{N-1})$. Moreover, the last inequality is due to a well known bound that can be found in \cite[Section 2]{statisticalWassersteinDistance}.

\medskip

 As $D_aL^i\in C^1$, we can deduce that $D_a\Lt^i\in C^1$ (in the sense of Fréchet and intrinsic differentiability for the latter two variables respectively). Furthermore, for all $\phi\in C_b(\Rd)$
 \[\langle D^2_{f^ja}\Lt^i(x,a,m^{-i},f^{-i}),\phi\rangle=\int_{(\Rd)^{N-1}}D^2_{A^ja}L(x,a,x^{-i},f^{-i}(x^{-i}))\phi(x^j)\ dm^{-i}(x^{-i})\]
 ($\langle,\rangle$ is the duality bracket and the integrals on the right are intepreted as Bochner integrals), we have $\norm{D^2_{f^ja}\Lt^i}_{\infty,\op}\leq\norm{D^2_{A^ja}L^i}_{\infty,\op}$.

 \medskip
 
 From the classical identity (\cite[Corollary A.2.7]{CanSin}),

        \begin{equation}\label{first order optimality condition}
        -p-D_a\Lt^i(x,-D_p\Ht^i(x,p,m^{-i},f^{-i}),m^{-i},f^{-i})=0,\end{equation}
we deduce by the (Banach space) implicit function theorem \cite[Theorem 5.9]{lang2012fundamentals} (technically one also needs to lift the measure variable to the space of $L^2$ random variables, see \cite[Section 5.2]{carmonadelarue1}) that $D_p\Ht^i(x,p,m^{-i},f^{-i})\in C^1$. 

\medskip

        Taking the Fréchet derivative  with respect to $f^j$ in \eqref{first order optimality condition}, $j\neq i$, in the Banach space $C_b(\Rd)$ gives
        \[D^2_{aa}\Lt^i(x,-D_p\Ht^i,m^{-i},f^{-i})D^2_{f^jp}\Ht^i(x,p,m^{-i},f^{-i})+D^2_{f^ja}\Lt^i(x,-D_p\Ht^i,m^{-i},f^{-i})=0,\]
        which by (\ref{D_fa Lt}) and condition 4 in Assumption \ref{Assumption L} yields $\norm{D^2_{f^jp}\Ht^i}_{\infty,\op}\leq\l^{-1}_{i,\min}\norm{D^2_{A^ja}L^i}_{\infty,\op}.$ 
        
        \medskip
        
        Differentiating \eqref{first order optimality condition} in the other variables one can prove the uniform Lipschitz continuity of $D_p\Ht^i$ with respect to the other variables.
\end{proof}

\begin{theorem}\label{existence thm stocahstic}
    Let $\b>0$, there exists a solution (according to Definition \ref{def classical solution}) to the system \eqref{DNash}.
\end{theorem}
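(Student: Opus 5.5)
The plan is a Schauder fixed point argument on the $N$-tuple of feedback controls, as announced in the introduction, rather than on the measure flows. Fix $R>0$ (to be chosen) and let $\mathcal{K}_R$ be the set of $\a=(\a^1,\dots,\a^N)$ with $\a^i\in C([0,T];W^{1,\i}(\Rd))$, $\norm{\a^i}_\infty+\norm{D_x\a^i}_\infty\le R$, and a uniform modulus of continuity in time. We equip it with the topology of locally uniform convergence on $[0,T]\times\Rd$, so that it is a convex set which, by Arzel\`a--Ascoli, is compact in the Fréchet space $C([0,T]\times\Rd;\Rd)^N$ with local uniform convergence.

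Given $\a\in\mathcal{K}_R$, we define $\Phi(\a)$ in three steps.
\begin{enumerate}
\item Solve the linear KFP equations $\p_tm^i-\b\D m^i+\na\cdot(m^i\a^i)=0$, $m^i_0=\m^i$. The drift is bounded and Lipschitz, so the SDE is strongly wellposed, $m^i\in C([0,T];\prob_2)$ with $\frac12$-Hölder time regularity and second moments bounded by $C(R)$.
\item Solve, for each $i$, the HJB equation with frozen data, namely with Hamiltonian $(x,p)\mapsto\Ht^i(x,p,m^{-i}_t,\a^{-i}_t)$ and terminal datum $\gt^i(\cdot,m^{-i}_T)$. Assumption \ref{Assumption L} (points 3--5, 8 together with the moment bound) gives a Hamiltonian that is $C^1$, uniformly convex and of quadratic growth in $p$, with $D_x\Ht^i$ bounded on bounded $p$-sets. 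Standard uniformly parabolic theory ($\b>0$) then yields a unique classical solution $w^i\in C^{1,2}$. The Lipschitz bounds on $L^i$ and $g^i$ in $x$ give $\norm{D_xw^i}_\infty\le C_1$. Semiconcavity of $L^i,g^i$ in $x$ together with convexity in $(x,a)$ (hence convexity of $w^i$ in $x$ via the usual optimal control representation) give a two-sided bound $\norm{D^2_{xx}w^i}_\infty\le C_2$. Crucially, $C_1$ and $C_2$ depend only on data and not on $R$, apart from the moment bounds on $m^{-i}$, which enter only through the quadratic growth in $y^{-i}$. One should check carefully that $C_1$ and $C_2$ do not grow with $R$, since the $A^{-i}$-dependence enters only through $D_xL$ bounds.
\item Solve the consistency relation for $\a$: define $\Phi(\a)=\beta=(\beta^i)$ as the solution of
\[\beta^i(t,x)=-D_p\Ht^i\bigl(x,D_xw^i(t,x),m^{-i}_t,\beta^{-i}_t\bigr),\quad i=1,\dots,N.\]
By Lemma \ref{lemma regularity L} and condition 9, the map $\beta\mapsto(-D_p\Ht^i(\cdot,D_xw^i,m^{-i},\beta^{-i}))_i$ is a contraction on $C_b([0,T]\times\Rd)^N$ in the norm $\max_i\norm{\cdot}_\infty$, so $\beta$ exists and is unique. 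Its Lipschitz bound in $x$ is obtained by a second contraction estimate: differentiating, $\Lip(\beta^i)\le C(1+C_2)+\theta\max_j\Lip(\beta^j)$ with $\theta<1$. (The $m$-Lipschitz constant of $D_p\Ht^i$ depends on $\Lip(f)$, but $m$ is constant in $x$, so this only matters for time regularity.) Hence $\Lip_x\beta\le C_3/(1-\theta)$, independently of $R$. For the sup bound on $\beta$, we use $D_xw^i$ bounded, $\beta$ at $\beta^{-i}=0$ bounded by coercivity (point 3 and Remark \ref{coercivity remark}), and the contraction again. Time continuity follows from the Hölder continuity of $m$, parabolic regularity of $D_xw^i$ in $t$, and the contraction.
\end{enumerate}

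Choosing $R$ larger than these a priori constants gives $\Phi(\mathcal{K}_R)\subset\mathcal{K}_R$. Continuity of $\Phi$ for local uniform convergence follows from stability of the KFP equations in $W_2$, stability of the HJB equations (the Hamiltonians converge locally uniformly, uniqueness plus the uniform $C^{1,2}$ bounds give convergence of $D_xw^i$), and the Lipschitz dependence of the contraction fixed point on its parameters. One subtlety is that $\Ht^i$ depends on $\a^{-i}$ only through integrals against $m^{-i}$, which have uniformly bounded second moments, so local uniform convergence of $\a^{-i}$ suffices. Schauder's theorem then yields a fixed point $\a=\Phi(\a)$, and the corresponding triple $(w,m,\a)$ solves \eqref{DNash} in the sense of Definition \ref{def classical solution}.

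The main obstacle is step 2: obtaining uniform $C^{1,1}$-type bounds in $x$ for $w^i$ that are independent of $R$, since the Hamiltonian is only defined via lifting and is not separable. I would first derive them from the stochastic control representation of $w^i$, using semiconcavity together with convexity in $(x,a)$, before resorting to Bernstein-type PDE estimates.
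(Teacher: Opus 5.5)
Your proposal is correct and shares its architecture with the paper's proof: a Schauder--Tychonoff fixed point on the feedback controls, KFP and HJB solved with frozen data, $R$-independent bounds on $\norm{D_xw^i}_\i$ and $\norm{D^2_{xx}w^i}_{\i,\op}$ from the control representation, the sup bound via coercivity at $A^{-i}=0$, and Kru\v{z}kov-type time regularity of $D_xw^i$. Where you differ is in how you handle the consistency relation.

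\emph{The paper's route.} It never solves the consistency relation inside the iteration. It sets $S(\alb)^i=-D_p\Ht^i(x,D_xw^i,m^{-i}_t,\alb^{-i}_t)$, with the \emph{input} in the last slot. The map is then explicit and its continuity is a direct passage to the limit. The cost is that the output's sup norm and time modulus inherit those of the input. Condition 9 is therefore spent closing the invariance estimates: $C_3\ge C_{C_0}/(1-\theta)$ and $C_9\ge C_8/(1-\theta)$, where $\theta:=\max_i\lambda_{i,\min}^{-1}\sum_{j\neq i}\norm{D^2_{A^ja}L^i}_{\i,\op}<1$.

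\emph{Your route.} You solve $\beta^i=-D_p\Ht^i(x,D_xw^i,m^{-i},\beta^{-i})$ exactly for frozen $(w,m)$ by a Banach contraction. This is closer to the way the contraction property is used in the mean field literature the paper cites and deliberately departs from. Because $\theta<1$ now acts on the output itself, its sup and Lipschitz bounds do not depend on the input at all, and its time modulus depends on the input only through $R$ (via condition 8 and the moment bounds). You can therefore fix $R$ first and then the modulus, e.g.\ $\omega_R(r)=C(R)r^{1/3}$.

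\emph{The price, and its fix.} Continuity of $\Phi$ needs more care than you give it. Since $D_xw^i_n\to D_xw^i$ only locally uniformly, sup-norm Lipschitz dependence of the inner fixed point on its parameters does not apply. Either of the following repairs it:
\begin{itemize}
\item use compactness of $\mathcal{K}_R$ together with uniqueness of the inner fixed point; or
\item note that $D_p\Ht^i$ is $\theta$-Lipschitz in $f$ with respect to $\sum_{j\neq i}\int|f^j-\bar f^j|\,dm^j$, and use tightness of $(m_t)_{t\in[0,T]}$.
\end{itemize}

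Three minor remarks.
\begin{itemize}
\item Since $x$ does not enter $\beta^{-i}_t$, you get directly $\Lip_x\beta^i\le\norm{D^2_{xp}\Ht^i}_{\i,\op}+\norm{D^2_{pp}\Ht^i}_{\i,\op}\norm{D^2_{xx}w^i}_{\i,\op}$, with no $\theta\max_j\Lip(\beta^j)$ term.
\item Requiring $\a^i\in C([0,T];W^{1,\i}(\Rd))$ inside $\mathcal{K}_R$ makes the set non-closed under locally uniform limits, so it is not compact. Drop that requirement; the paper's $\mathscr{X}$ imposes only the bounds.
\item The step you single out as the main obstacle is immediate. The Lipschitz and semiconcavity constants of $L^i,g^i$ in $x$ are uniform in all other variables, so the $C^{1,1}$ bounds on $w^i$ depend on the data only, not even on the moments.
\end{itemize}
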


\begin{proof}
    Let $C_2,C_3,C_9>0$ be constants to be determined, define the convex and closed set $\mathscr{X}$ in the Fréchet space $(C([0,T]\times\Rd;\Rd))^N$ by
\[\mathscr{X}:=\Bigl\{\a\ \Bigl|\ \norm{\a^i}_\infty\leq C_3,\ |\a^i(t,x)-\a^i(s,y)|\leq C_9|t-s|^{\frac{1}{3}}+C_2|x-y|\ \forall (t,x),(s,y)\in[0,T]\times\Rd\ \Bigr\}.\]
The series of constants appearing in the proof $C_n$, $n=0,\ldots,9$ only depend on the set of data, moreover the $C_n$'s do not depend on $C_k$ for any $k>n$.

\medskip

Given $\alb\in X$, define the mapping $S(\alb):=\a$ as follows,
\begin{equation}\label{S(alpha)}
    (S(\alb))^i=\a^i(t,x):=-D_p\Ht^i(x,D_xw^i(t,x),m^{-i}_t,\alb^{-i}_t),
\end{equation}
where $(w,m)$ are the unique solution to the PDE system below
\begin{equation}\label{DSys2}
    \begin{cases}
      \p_t m^i_t-\b\D m_t^i +\na\cdot(m^i_t \alb^i_t)=0, &\textnormal{in }(0,T)\times\Rd,\\
        -\p_tw^i-\b\D w^i+\Ht^i(x,D_xw^i,m^{-i}_t,\alb^{-i}_t)=0, &\textnormal{in }(0,T)\times\Rd,\\
    w^i(T,x)=\gt^i(x,m^{-i}_T),\ m^i_0=\m^i, &\textnormal{in }\Rd.
    \end{cases}
\end{equation}
    Indeed, the KFP equation admits a unique distributional solution $m^i_t=\Leb(X^i_t)\in C([0,T];\prob_2(\Rd))$,
    where $X^i_t$ is the unique solution to the SDE
    \[dX^i_t=\alb^i(t,X^i_t) dt+\sqrt{2\b}dB^i_t,\ \ X^i_0=\xi^i.\]
    Next, the HJB equation admits a unique classical solution (a priori $w^i$ is only a viscosity solution, however by parabolicity and uniqueness we can deduce that it is in fact classical, see below)
    \[w^i(t,x)=\inf\Biggl\{\E\left[\int^T_t \Lt^i(Y_s,u_s,m^{-i}_s,\alb^{-i}_s)\ ds+\gt^i(Y_T,,m^{-i}_T)\right]\Biggl\}\]
    where the infimum is taken over all $\sF^i$-progressively measurable processes and
    \[dY_s=u_sds+\sqrt{2\b}dB^i_s,\ \ Y_t=x.\]
    By classical results in stochastic control theory (see \cite[Chapter 4, Theorem 1]{krylov1980controlled}, \cite[Proposition 4.5]{yong2012stochastic} and Theorem \ref{convexity thm}), under the global Lipschitz continuity, semiconcavity and convexity 
    assumptions on the running cost $\Lt^i( \cdot,\cdot,m^{-i}_t,\alb^{-i}_t)$ and the final cost $\gt^i( \cdot,m^{-i}_T)$ (which are inferred from conditions 3,5,6 in Assumption \ref{Assumption L}), there exists constants $C_0$ and $C_1$ only depending on the set of data such that we have
    \[\norm{D_xw^i}_\i\leq C_0\ \ \textnormal{and}\ \ \norm{D^2_{xx}w^i}_{\i,\op}\leq C_1.\]
    It follows that there exists a constant we choose to be $C_2>0$ such that for all $t\in[0,T], x,y\in\Rd$
    \begin{align*}
        |\a^i(t,x)-\a^i(t,y)|&= |D_p\Ht^i(x,D_xw^i(t,x),m^{-i}_t,\alb^{-i}_t)-D_p\Ht^i(y,D_xw^i(t,y),m^{-i}_t,\alb^{-i}_t)|\\
        &\leq \norm{D^2_{xp}\Ht^i}_{\infty,\op}|x-y|+\norm{D^2_{pp}\Ht^i}_{\infty,\op}|D_xw^i(t,x)-D_xw^i(t,y)|)\\
        &\leq C_2|x-y|.
    \end{align*}
 We infer from condition 3 in Assumption \ref{Assumption L} that $\Lt^i$ satisfies \eqref{coercive}\footnote{As before we technically first need to do a $L^2$ lifting of the measure variable.}, hence by Theorem \ref{legendre} in the appendix, there exists a constant $C_{C_0}>0$ such that
    \[\sup_{|p|\leq C_0}\sup_{(t,x)\in[0,T]\times\Rd}|D_p\Ht^i(x,p,m^{-i}_t,0)|\leq C_{C_0},\]
    it follows that \[\norm{\a^i}_\infty\leq C_{C_0}+\sum_{j\neq i}\norm{D^2_{f^jp}\Ht^i}_{\i,\op}\norm{\alb^j}_\i\leq C_{C_0}+\l_{i,\min}^{-1}\sum_{j\neq i}\norm{D^2_{f^ja}\Lt^i}_{\i,\op}\norm{\alb^j}_\i.\]
We now choose $C_3>0$ to be such that 
\[C_3\geq \frac{C_{C_0}}{1-\l_{i,\min}^{-1}\sum_{j\neq i}\norm{D^2_{f^jp}\Lt^i}_{\i,\op}}.\]
 Since $\alb\in X$, $\norm{\alb^j}_\i\leq C_3$ and thus
\[\norm{\alb}_\i\leq C_{C_0}+C_3\l_{i,\min}^{-1}\sum_{j\neq i}\norm{D^2_{f^jp}\Lt^i}_{\i,\op}\leq C_3 .\]
Moreover, as $-D_x\Ht^i(x,p,m^{-i}_t,\alb^{-i}_t)=D_x \Lt^i(x,-D_x\Ht^i(x,p,m^{-i}_t,\alb^{-i}_t),m^{-i}_t,\alb^{-i}_t)$
\begin{align*}\sup_{|p|\leq C_0}\sup_{(t,x)\in[0,T]\times\Rd}|D_x\Ht^i(x,p,m^{-i}_t,\alb^{-i}_t)|&=\sup_{|a|\leq C_3}\sup_{(t,x)\in[0,T]\times\Rd}|D_x \Lt^i(x,a,m^{-i}_t,\alb^{-i}_t)|\\
&\leq C_{C_3}\left(1+\sum_{j\neq i}\int_\Rd|y|^2\ dm^{j}_t(y)\right).
\end{align*}
By \cite[Theorem 3.2.2]{Zhang2017}, there exists a constant $C_4>0$ such that
\[\int_\Rd|y|^2\ dm^{j}_t(y)=\E\left[|X^j_t|^2\right]\leq C_{C_2}( \E\left[|\xi^j|^2\right]+T( C_3+\sqrt{2\b})^2)\leq C_4,\]
thus there exists $C_5>0$ such that $\sup_{|p|\leq C_0}\sup_{(t,x)\in[0,T]\times\Rd}|D_x\Ht^i(x,p,m^{-i}_t,\alb^{-i}_t)|\leq C_5$.
\medskip

For $j=1,\ldots,d$, by classical parabolic Schauder estimates, $\p_{x_j}w^i\in C^{1,2}$. Indeed, by \cite[Theorem 6.1]{ladyzhenskaia} or  \cite[Theorem 12.14]{lieberman1996second} we have that $w\in C^{1+\d/2,2+\d}_{\textnormal{loc}}$, for some $\d\in(0,1)$, then we can bootstrap up to the desired regularity using \cite[Theorem 8.12.1]{krylov1996lectures} (as $D_xw^i$ is bounded, with a cut off argument, without loss of generality we can take $\Ht^i$ to be Lipschitz in $p$, so that the theorems cited above applies, see \cite[Proof of Theorem 1.5]{cardaliaguetMFG}).

\medskip

Denote the $j$-th component of $D_xw^i$ as $u^{ji}:=\p_{x_j}w^i$, then $u^{ji}$ satisfies the inhomogeneous heat equation
\begin{equation}\label{heat eqn}
    \begin{cases}
        -\p_tu^{ji}-\b\D u^{ji}=f^{ji}
        , &\textnormal{in }(0,T)\times\Rd,\\
        u^{ji}(T,x)=\p_{x_j}\gt^i(x,m^{-i}_T), &\textnormal{in }\Rd,
    \end{cases}
\end{equation}
where $f^{ji}(t,x)=\p_{x_j}\Ht^i(x,D_xw^i,m^{-i}_t,\alb^{-i}_t)+D_p\Ht^i(x,D_xw^i,m^{-i}_t,\alb^{-i}_t)D_xu^{ji}$. With the bounds we have derived, we find that there exists a $C_6>0$ such that $\norm{f^{ji}}_\infty\leq C_6.$

\medskip

We use an idea from Kružkov \cite{Kruzhkov}, which was also used in \cite[Lemma B.1]{CirantRedaelli}, in order to transfer the spatial Lipschitz regularity of $u^{ji}$ (note that $\norm{D_xu^{ji}}_\infty\leq C_1$) to Hölder regularity in time. In particular, there exists a constant $C_7>0$ (depending only on $C_0,C_1,C_6,\b,d,T)$ such that for all $x\in\Rd, t,s\in[0,T]$
\[|u^{ji}(t,x)-u^{ji}(s,x)|\leq C_7|t-s|^\frac{1}{3}.\]
By classical results on KFP equations (e.g. \cite[Chapter 1, Remark 1.6]{cardaliaguetMFG}), we also have 
\begin{equation}\label{continuity of measure flow}W_2(m^j_t,m^j_s)\leq \sqrt{T}C_3|t-s|^\frac{1}{2}\leq T^\frac{2}{3}C_3|t-s|^\frac{1}{3}.\end{equation}
Then ($D_{m^{-i}}$ below denotes the intrinsic derivative, c.f. \cite[Chapter 5]{carmonadelarue1}, simply serving as a placeholder for the Lipschitz constant at the moment)
\begin{align*}
    |\a^i(t,x)-\a^i(s,x)|&=|D_p\Ht^i(x,D_xw^i(t,x),m^{-i}_t,\alb^{-i}_t)-D_p\Ht^i(x,D_xw^i(s,x),m^{-i}_s,\alb^{-i}_s)|\\
    &\leq \norm{D^2_{pp}\Ht^i}_{\infty,\op}|D_xw^i(t,x)-D_xw^i(s,x)|+\norm{D^2_{m^{-i}p}\Ht^i}_{\infty,\op}\left(\sum_{j\neq i}W_2^2(m^j_t,m^j_s)\right)^\frac{1}{2}\\&+\sum_{j\neq i}\ \norm{D^2_{f^jp}\Ht^i}_{\infty,\op}\norm{\alb^j(t,\cdot)-\alb^j(s,\cdot)}_\infty.
\end{align*}

By  previous bounds and Lemma \ref{lemma regularity L} we find a constant $C_8>0$ such that 
\[|\a^i(t,x)-\a^i(s,x)|\leq\left(C_8|t-s|^\frac{1}{3}+\l_{i,\min}^{-1}\sum_{j\neq i}\ \norm{D^2_{A^ja}L^i}_{\infty,\op}\norm{\alb^j(t,\cdot)-\alb^j(s,\cdot)}_\infty\right).\]
Recalling condition 9 of Assumption \ref{Assumption L}, we now choose $C_9>0$ to be such that 
\[C_9\geq\frac{C_8}{1-\l_{i,\min}^{-1}\sum_{i\neq j}\norm{D^2_{A^ja}L^i}_{\infty,\op}},\]
since $\alb\in \mathscr{X}$, $\norm{\alb^j(t,\cdot)-\alb^j(s,\cdot)}_\infty\leq C_9|t-s|^\frac{1}{3},$ then
\begin{align*}|\a^i(t,x)-\a^i(s,x)|&\leq\left(C_8+\l_{i,\min}^{-1}\sum_{j\neq i}\ \norm{D^2_{A^ja}L^i}_{\infty,\op}C_9\right)|t-s|^\frac{1}{3}\\
&\leq C_9|t-s|^\frac{1}{3}\end{align*}
and thus $S(\alb)=\a\in \mathscr{X}$.

\medskip

We have shown that $S:\mathscr{X}\rightarrow \mathscr{X}$ is well defined. Moreover, by Arzelà--Ascoli theorem \cite[Theorem 47.1]{munkres2000topology}, $\mathscr{X}$ is compact in $(C([0,T]\times\Rd;\Rd))^N$ equipped with the (product) topology of local uniform convergence. It remains to show that $S$ is continuous in order to apply the Schauder--Tychonoff fixed point theorem \cite[Theorem 3.2]{bonsall1962lectures}. 

\medskip

To this end let $\alb_n,\alb\in \mathscr{X}$, $n\in\N$ be such that $\alb_n\rightarrow \alb$ locally uniformly on $[0,T]\times\Rd$, as $n\to+\infty$. By classical stability of Fokker--Planck equations (inferred from stability for SDEs \cite[Theorem 3.4.2]{Zhang2017}) we have $m_n\rightarrow m$ in $(C([0,T];\prob_1))^N$, as $n\to+\infty$, where $m_n,m$ denotes the solution to the Fokker--Planck equations associated to the vector fields $\alb_n,\alb$ respectively.

\medskip

Denote by $w_n,w$ the corresponding solution to the Hamilton-Jacobi equations associated to $(m_n,\alb_n),(m,\alb)$. By the previously derived bounds on the solution to the Hamilton-Jacobi equations, which are independent of $n\in\N$, together with the bound
\begin{align*}
    |\p_t w^i_n(t,x)|&\leq\b|\D w^i_n(t,x)|+|\Ht^i(x,D_xw^i_n(t,x),m^{-i}_n(t),\alb^{-i}_n(t))|\\
    &\leq \b C_1+C_3C_0+C_{|x|+C_3}(1+(N-1)C_4),
\end{align*}
where we have used the identity (and growth conditions on $L^i$)
\[\Ht^i(x,p,m,f)=p\cdot D_p\Ht^i(x,p,m,f)-\Lt^i(x,-D_p\Ht^i(x,p,m,f),m,f),\]
we can extract from every subsequence $(w_{n_j})_{n_j}$, a further subsequence which we do not relabel such that $w_{n_j}\rightarrow \wt, D_xw_{n_j}\rightarrow D_x\wt$ locally uniformly, as $j\to+\infty$. By stability (e.g. from Duhamel's formula) and uniqueness of the HJB equation we have that $\wt=w$. Therefore we must have that the whole sequence converges, in particular $D_xw_n\rightarrow D_x w$ locally uniformly, as $n\to+\infty$. Then we can pass to the limit in the expression for $S(\alb_n)$ and conclude that $S$ is in fact a continuous mapping. 

\medskip

Lastly, we apply the Schauder fixed point theorem to conclude the existence of a fixed point $\a\in \mathscr{X}$. Clearly, $\a$ generates a triplet $(w,m,\a)$ which is a solution of (\ref{DNash}).
    %
\end{proof}

\section{Existence in the Deterministic Case}\label{section existence deterministic}

Consider next the case of zero idiosyncratic noise dynamics with random initial conditions, i.e. we investigate the existence of solutions of (\ref{DNash}) with $\b=0.$

\medskip

In this case, we can relax some assumptions, in particular conditions 3, 5, 6 are relaxed from Assumption \ref{Assumption L} and the boundedness condition on $D_xL$ is removed.

\begin{assumption}\label{Assumption L loc}\
    \begin{enumerate} 
    \item $\m^i\in\prob_2(\Rd)$.
    
    \item $(x,a,y^{-i},A^{-i})\mapsto L^i(x,a,y^{-i},A^{-i})\in C^1$ are convex functions of the variables $(x,a)$ for all $(y^{-i},A^{-i})$.
        
        \item $\textnormal{There exists } C>0,\ \textnormal{such that}\ L^i(x,a,y^{-i},A^{-i})\geq\th(|a|)-C(1+|x|)$ where $\th:[0,\infty)\rightarrow[0,\infty)$ is superlinear.

        \item $\textnormal{There exists }\l_{i,\max}\geq\l_{i,\min}>0$ such that $\l_{i,\min}I\leq D^2_{aa}L^i\leq\l_{i,\max}I$ uniformly over all variables.

        \item $L^i$ are locally Lipschitz continuous and (globally) semiconcave in the variable $x$ uniformly over all other variables. Moreover $L^i$ have quadratic growth in the $(y^{-i},A^{-i})$. To be precise, for all $R>0,\ |x|, |a|\leq R,\textnormal{there exists }   C_R>0$ such that $ |L^i(x,a,y^{-i},A^{-i})|\leq C_R(1+\sum_{j\neq i}(|y^{j}|^2+|A^j|^2))$

        \item $g^i\in C^1$ convex, locally Lipschitz continuous and (globally) semiconcave in the variable $x$ uniformly.

 \item $D_xL^i$ have quadratic growth in the variables $(y^{-i},A^{-i})$. To be precise, for all $R>0,|x|,|a|\leq R,\textnormal{there exists }   C_R>0$ such that $ |D_xL^i(x,a,y^{-i},A^{-i})|\leq C_R(1+\sum_{j\neq i}(|y^{j}|^2+|A^j|^2))$

        \item $D_aL^i\in C^1$ are (globally) Lipschitz continuous in all variables uniformly.
        

        \item $\l^{-1}_{i,\min}\sum_{i\neq j}\norm{D^2_{A^ja}L^i}_{\infty,\op}<1$ 
    \end{enumerate}
\end{assumption}

\medskip

\begin{theorem}\label{existence thm intermediate}
    Let Assumption \ref{Assumption L loc} holds, $\b=0$. Furthermore, suppose $\m^i$ are compactly supported, then there exists a solution to the system \eqref{DNash}.
\end{theorem}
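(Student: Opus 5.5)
We would prove Theorem \ref{existence thm intermediate} by vanishing viscosity: take solutions of the uniformly parabolic problems given by Theorem \ref{existence thm stocahstic} and let $\b\downarrow 0$. A direct Schauder argument at $\b=0$, mimicking Theorem \ref{existence thm stocahstic}, looks harder to me, because the first order HJB equation does not provide the $C^{1,1}$ bound on $w^i$. That bound is exactly what makes the feedback $\a^i$ Lipschitz.

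\textbf{Step 1: approximate data.} Assumption \ref{Assumption L loc} is only local: Lipschitz bounds are local in $x$, and growth in $A^{-i}$ is allowed. So we first truncate. Let $R_0$ be such that $\spt\,\m^i\subset B_{R_0}$. We build $L^i_R$ and $g^i_R$ that satisfy Assumption \ref{Assumption L} and coincide with $L^i,g^i$ on a large ball $\{|x|\le R\}$. To do this we modify $L^i$ only in the $x$ variable outside $B_R$, for instance by a convex Lipschitz extension: inf-convolution with $M|x|$, followed by mollification. This construction must preserve four properties: convexity in $(x,a)$, semiconcavity, the bounds $\l_{i,\min},\l_{i,\max}$ on $D^2_{aa}L^i$, and the contraction condition 9. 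The last two hold if the modification is applied to the $x$-part only, since then $D^2_{aa}L^i$ and $D^2_{A^ja}L^i$ are unchanged. Theorem \ref{existence thm stocahstic} then gives, for each $\b\in(0,1]$, a solution $(w_\b,m_\b,\a_\b)$ of \eqref{DNash}.

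\textbf{Step 2: uniform bounds.} We need estimates uniform in $\b$ for the truncated problem with fixed $R$. From the control representation of $w^i_\b$ together with Theorem \ref{convexity thm}, $w^i_\b(t,\cdot)$ is convex, since $\Lt^i,\gt^i$ are convex in $(x,a)$. It is also semiconcave, with a constant depending only on the semiconcavity of $L^i,g^i$ and not on $\b$. Hence $0\le D^2_{xx}w^i_\b\le C_1$ and $\|D_xw^i_\b\|_\infty\le C_0$ uniformly in $\b$. Repeating the $C_2$ and $C_3$ steps of the proof of Theorem \ref{existence thm stocahstic}, with contraction condition 9 used as before, gives uniform spatial Lipschitz and $L^\infty$ bounds on $\a_\b$. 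For time regularity we cannot use the Kružkov $|t-s|^{1/3}$ estimate as stated, because it depends on $\b$. Instead we use that along the flow $D_xw^i_\b$ satisfies the equation \eqref{heat eqn}, which has bounded right-hand side. This gives a modulus $|D_xw^i_\b(t,x)-D_xw^i_\b(s,x)|\le C(|t-s|^{1/2}+\sqrt{\b|t-s|})$, using the semiconcavity/Lipschitz interpolation trick, which is uniform for $\b\le1$. The contraction condition 9 then absorbs the $\a^{-i}$-dependence exactly as in the $C_9$ step. The measures satisfy $W_2(m^i_{\b,t},m^i_{\b,s})\le C|t-s|^{1/2}$, and their second moments are bounded uniformly. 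Since $\|\a_\b\|_\infty\le C_3$, the supports of $m^i_\b$ stay within $B_{R_0+C_3T}$ up to Gaussian tails that vanish as $\b\to0$.

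\textbf{Step 3: passage to the limit.} By Arzelà--Ascoli, along a subsequence $\a_\b\to\a$ locally uniformly, $w_\b\to w$ and $D_xw_\b\to D_xw$ locally uniformly (using the uniform semiconcavity and convexity), and $m_\b\to m$ in $C([0,T];\prob_1)$. The limit $\a$ is Lipschitz in $x$, so the continuity equation with $\b=0$ is well posed and the KFP limit is immediate. Lemma \ref{lemma regularity L} gives continuity of $D_p\Ht^i$ and $\Ht^i$ in $(p,m,f)$, so we can pass to the limit in the consistency relation. Stability of viscosity solutions passes the HJB equation to the limit. Because $w^i(t,\cdot)$ is both convex and semiconcave, it is $C^{1,1}$ in $x$. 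The equation then gives $\p_tw^i$ continuous, so $w^i\in C^1$, as Definition \ref{def classical solution} requires for $\b=0$. Finally, we remove the truncation. For $R>R_0+C_3T+1$, all relevant trajectories of the limit deterministic flow stay in $B_R$, so the truncated and original lifted costs coincide on $\spt\, m^{-i}_t$. The remaining issue is the HJB equation at points $x$ outside $B_R$. Here we use that $x$ enters $L^i_R$ only through its own variable, so only the values of $w^i$ along the support matter, and we take $R$ as large as needed. Alternatively, we construct $w^i$ on all of $\Rd$ by letting $R\to\infty$ with local bounds.

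\textbf{Main obstacle.} I expect the main obstacle to be Step 2, specifically the $\b$-uniform time modulus of $\a_\b$ and the choice of a truncation that preserves convexity, semiconcavity and condition 9 simultaneously. The compact support of $\m^i$ is exactly what allows the local assumptions to be localized.
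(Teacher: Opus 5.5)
Your route (truncate, invoke Theorem \ref{existence thm stocahstic}, let $\b\downarrow0$) has two steps that fail as written.

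\textbf{Step 1 (truncation).} The whole route rests on this step, because Theorem \ref{existence thm stocahstic} needs all of Assumption \ref{Assumption L}, including condition 9. An inf-convolution in $x$ does \emph{not} leave $D^2_{aa}L^i$ and $D^2_{A^ja}L^i$ unchanged unless $L^i$ is separable in $x$. Where the minimiser $z=z(a,A^{-i})$ is interior, the envelope theorem gives, at $z$, $D^2_{aa}L_M=D^2_{aa}L-D^2_{az}L\,(D^2_{zz}L)^{-1}D^2_{za}L$ and $D^2_{Aa}L_M=D^2_{Aa}L-D^2_{az}L\,(D^2_{zz}L)^{-1}D^2_{zA}L$. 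This can destroy the contraction. For example, take $d=1$, $N=2$, $\phi(s)=\sqrt{K^{-2}+s^2}$ and $L^1(x,a,y,A)=\frac12a^2+\frac12x^2+\phi(x-a)+\phi(x+A)$. This cost satisfies Assumption \ref{Assumption L loc} with $\l_{1,\min}=1$ and $D^2_{Aa}L^1\equiv0$. Take its inf-convolution with $M|x|$, where $M\ge R+2$ so that it agrees with $L^1$ on $B_R$. At $(x,a,A)=(x,M,-M)$ with $x>M$ the minimiser is $z=M$, and there $D^2_{Aa}=K^2/(1+2K)$, while $\inf D^2_{aa}$ is still $1$. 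So condition 9 fails once $K>1+\sqrt2$. Mollifying in $x$ does not help, since $z$ is independent of $x$ in that region.

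\textbf{Step 3 (removing the truncation).} Even granting a good truncation, Step 3 does not remove it.
\begin{enumerate}
\item The $C_3$ in ``$R>R_0+C_3T+1$'' is the global bound on $\a_\b$ for the truncated data. It therefore depends on $R$ through the global Lipschitz constants of $L^i_R,g^i_R$; for $g^i=\frac12|x|^2$ one gets $C_3\gtrsim R/\l_{i,\max}$. The condition is circular and can fail for every $R$.
\item The limit $w^i$ solves the HJB equation with the truncated Hamiltonian at $x\notin B_R$, and near $\partial B_R$ its optimal paths see the modified cost. Definition \ref{def classical solution}, however, requires the HJB equation and the consistency relation with $\Ht^i$ on all of $(0,T)\times\Rd$. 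That only $\a^{-i}$ on $\spt\, m^{-i}_t$ enters the coupling does not address this.
\end{enumerate}
Repairing this needs $R$-independent local estimates. You would need the linear growth $|\a^i(t,x)|\le C_3(0)+C_2|x|$, with $C_2$ coming from the truncation-independent semiconcavity constant, to confine $\spt\, m^i_t$. You would also need local bounds $|D_xw^i|\le C_0(R)$ on $B_R$, either for a diagonal limit $R\to\infty$ or for re-solving the untruncated HJB equation with frozen $(m^{-i},\a^{-i})$ and matching gradients on the supports.

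These are exactly the ingredients of the paper's proof, which runs the Schauder--Tychonoff argument directly at $\b=0$. Your reason for avoiding that is mistaken. The $C^{1,1}$ bound comes from convexity plus semiconcavity of the value function, which you yourself use uniformly in $\b$ in Step 2, and which holds verbatim at $\b=0$. The paper's argument is as follows.
\begin{itemize}
\item It takes $\mathscr X$ with a global spatial Lipschitz constant $C_2$ but only local bounds $C_3(R)$, $C_9(R)$.
\item It confines the supports using compactness of $\spt\,\m^i$ and the flow estimate $|\varphi^i(t;y)|\le e^{T(C_2+C_3(0))}(|y|+(C_2+C_3(0))T)$; this is what controls the growth of $D_x\Ht^i$ in $A^{-i}$.
\item It obtains Lipschitz-in-time regularity of $D_xw^i$ a.e.\ directly from the differentiated HJB equation.
\end{itemize}
No truncation and no vanishing viscosity limit are needed.
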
 

\begin{proof}
    The proof is essentially the same as the proof of Theroem \ref{existence thm stocahstic}, the difference mostly being that we now obtain the equicontinuity in time from (\ref{heat eqn}) directly. Define the set 
\[\mathscr{X}:=\Bigl\{\a\ |\ |\a^i(t,x)|\leq C_3,\ |\a^i(t,x)-\a^i(s,y)|\leq C_{9}|t-s|+C_2|x-y|\ \forall\ (t,x),(s,y)\in [0,T]\times B_R\ \Bigr\}\]
where $C_n=C_n(R):[0,\infty)\rightarrow[0,\infty)$ are increasing functions depending only on the set of data, 
we will refer to such functions as numerical functions. In particular $C_2$ is just a constant, so that the vector fields in $\mathscr{X}$ are globally Lipschitz continuous in space. 

\medskip
 
 Define the mapping $S(\alb)=\a$ as in \eqref{S(alpha)}, there exist numerical function $C_0$ and constant $C_1$ such that
\[|D_xw^i|\leq C_0(R)\ \ \textnormal{on}\ [0,T]\times B_R\ \ \textnormal{and}\ \ \norm{D^2_{xx}w^i}_{\infty,\op}\leq C_1,\]
where $(w,m)$ are the unique solution to \eqref{DSys2} with $\b=0.$ These estimates follow from the same reasoning as in the proof of Theorem \ref{existence thm stocahstic}.

\medskip
    It follows that there exists constant $C_2>0$ such that for all $t\in[0,T], x,y\in \Rd$
    \begin{align*}
        |\a^i(t,x)-\a^i(t,y)|\leq C_2|x-y|.
    \end{align*}
    By Theorem \ref{Legendre thm 2}, we can choose $C_3$ to be such that for all $(t,x)\in[0,T]\times B_R$,
    \[|\a^i(t,x)|=|D_p\Ht^i(x,D_xw^i(t,x),m^{-i}_t,\alb^{-i}_t)|\leq C_3(R).\]
    %
  %
  %
  We need an estimate on the support of $m_t^i=(\varphi^i(t;\cdot))_\sharp\m^i$, where we denoted $\varphi$ as the flow map, i.e. for $y\in\spt(\m^i)$
\[\begin{cases}
    \dot{\varphi}^i(t;y)=\a^i(t,\varphi^i(t;y)), &\textnormal{in }(0,T),\\
    \varphi^i(0;y)=y.
\end{cases}\]
  
  It follows from the assumption that $\m^i$ is compactly supported and the estimate
\[|\varphi^i(t;y)|\leq e^{T(C_2+C_3(0))}(|y|+(C_2+C_3(0))T),\]
that there is a constant $C_4>0$ such that the support of $m^i_t$ is contained in the ball $B_{C_4}$ for all $t\in[0,T].$ 

\medskip

    Next, using the growth bound condition 7 in Assumption \ref{Assumption L loc},
    \begin{align*}\sup_{|p|\leq C_0(R)}\sup_{(t,x)\in[0,T]\times\Rd}|D_x\Ht^i(x,p,m^{-i}_t,\alb^{-i}_t)|&=\sup_{|a|\leq C_3(R)}\sup_{(t,x)\in[0,T]\times\Rd}|D_x \Lt^i(x,a,m^{-i}_t,\alb^{-i}_t)|\\
    &\leq C_{C_3}(R)\left(1+\sum_{j\neq i}\left(\int_{B_{C_4}}|y^{j}|^2+|\alb^j(t,y^j)|^2\ dm^{j}_t\right)\right)\\
    &\leq C_{C_3}(R)+(N-1)(C_4^2+C_3(C_4))=:C_5(R).
\end{align*}
 We now understand (\ref{heat eqn}) (with $\b=0$) in the a.e. sense, which is allowed by the $C^{1,1}$ estimates and the fact that we deduce from control theory that $w^i$, $i\in\{1,\dots,N\},$ are also Lipschitz continuous in time (so the Hamilton--Jacobi equation to \eqref{DSys2} is in fact satisfied pointwise a.e.). 
    \begin{equation}\label{time estimate Dxw}|\p_tD_xw^i(t,x)|\leq C_4(R)+C_1 C_3(R)\leq C_6(R)\ \ \ae\ (t,x)\in[0,T]\times B_R.\end{equation}
%
For some constant $C_7>0$, (\ref{continuity of measure flow}) now becomes
    \[W_2(m^j_t,m^j_s)\leq\E\Bigl[|X^j_t-X^j_s|^2\Bigr]^\frac{1}{2}\leq\E\left[\left(\int^t_s |\alb^j(\t,X^j_\t)|\ d\t\right)^2\right]^\frac{1}{2}\leq C_7|t-s|.\]
Then it is a straightforward calculation that for all $t,s\in[0,T],x\in B_R$
\begin{align*}&\ |D_a \Lt^i (x,D_xw^i(t,x),m^{-i}_t,\alb_t^{-i})-D_a\Lt^i(x,D_xw^i(s,x),m^{-i}_s,\alb_s^{-i})|
\\ \leq&\left(C_7\norm{D^2_{m^{-i}a}\Lt^i}_{\infty,\op}+\norm{D^2_{pa}\Lt^i}_{\infty,\op}C_5(R)+\sum_{j\neq i}\ \norm{D^2_{A^ja}L^i}_{\infty,\op}\int_{\spt(m^j_0)} C_{9}(|y|)\ dm^j_0(y)\right)|t-s|\\
\leq&\left(C_7\norm{D^2_{m^{-i}a}\Lt^i}_{\infty,\op}+\norm{D^2_{pa}\Lt^i}_{\infty,\op}C_5(R)+\sum_{j\neq i}\ \norm{D^2_{A^ja}L^i}_{\infty,\op}C_{9}(C_6)\right)|t-s|.\end{align*}
    %
    Transferring the Lipschitz regularity in time from $D_a\Lt$ to $D_p\Ht$ with a similar method, as in Lemma \ref{lemma regularity L}, we find a numerical function $C_8$ such that
    \[|\a^i(t,x)-\a^i(s,x)|\leq\left( C_{8}(R)+\l_{i,\min}^{-1}\sum_{j\neq i}\ \norm{D^2_{A^ja}L^i}_{\infty,\op}C_9(C_6)\right)|t-s|.\]
    Without loss of generality, we can redefine $C_8(R):=C_8(C_6)$ for $R\in[0,C_6]$, now choose $C_9(R)$ such that
    \[C_9(R)\geq\frac{C_8(R)}{1-\l_{i,\min}^{-1}\sum_{i\neq j}\norm{D^2_{A^ja}L^i}_{\infty,\op}},\]

    and 
    proceed as in the proof of Theorem \ref{existence thm stocahstic} to deduce the existence of a fixed point $\a$. Notice that a fixed point $\a$ indeed generates $w\in C^1([0,T]\times\Rd)$, according to \eqref{time estimate Dxw} $D_xw^i$ is further continuous in time, then from the Hamilton--Jacobi equation we see that $\p_tw^i$ must coincide with a continuous function a.e., i.e. continuous up to redefining. 
\end{proof}

Note that the existence of solutions to \eqref{DNash deterministic} follows as a special case of the theorem above. 

\begin{remark}
    We expect that it is possible to remove the assumption that initial measures are compactly supported, if we keep careful track of the polynomial growth of the constants $C_n$ and use moment bounds of the measure flow $m_t$. However, for the simplicity of the presentation of the results, we chose not to do this here.
\end{remark}

\begin{remark}
   Although we have initially defined the lifted functions $\Lt,\Ht$ on $(C_b(\Rd))^{N-1}$ for the last variable, as $\a$ is no longer expected to be bounded in the deterministic case, we need to relax this to the Fréchet space $(C(\Rd))^{N-1}$. It is no longer possible to deduce that the functions $D_p\Ht^i$ are Lipschitz continuous in the last variable, as Lemma \ref{lemma regularity L} relies on the implicit function theorem which no longer holds on  Fréchet spaces. Therefore, it is necessary to instead treat the functions $\Lt,\Ht$ as a function of time instead, as we have done in the proof above.
\end{remark}

\section{Uniqueness under Semimonotonicity}\label{section uniqueness}
We shall assume the following displacement semimonotonicity assumption is satisfied. Note that this condition is a natural generalisation of the notion of displacement semimonotonicity in the existing literature to the case of nonsymmetric cost functions with interactions through controls, see \cite[Assumption 2.3]{cirant2025nonasymptoticapproachstochasticdifferential} and \cite[Assumption 2.3, Lemma 3.4]{jackson2025quantitativeconvergencedisplacementmonotone}.

\medskip

From now on, generic constants $C,C_\e,C_\d$ may increase from line to line while retaining the same notation.
\begin{assumption}\label{Assumption semimon}
There exists constants $C_{L,a}>0,\ C_{L,x},C_g\geq0$ such that
\begin{equation}\label{C disp}
    C_{disp}:=C_{L,a}-TC_g-\frac{T^2}{2}C_{L,x}>0,
\end{equation}
 for all $(x,a),(\xb,\ab)\in(\Rd)^N\times(\Rd)^N$
    \begin{align*}
        \sum^N_{i=1}\Biggl( \Bigl(D_aL^i&(x^i,a^i,x^{-i},a^{-i})-D_aL^i(\xb^i,\ab^i,\xb^{-i},\ab^{-i})\Bigr)\cdot(a^i-\ab^i)\\
        &+ \Bigl(D_xL^i(x^i,a^i,x^{-i},a^{-i})-D_xL^i(\xb^i,\ab^i,\xb^{-i},\ab^{-i})\Bigr)\cdot(x^i-\xb^i)\Biggr)\\
         &\ \ \ \ \ \ \ \ \ \ \ \ \geq C_{L,a}|a-\ab|^2-C_{L,x}|x-\xb|^2
    \end{align*}
and
\[\sum^N_{i=1}\Bigl(D_xg^i(x^i,x^{-i})-D_xg^i(\xb^i,\xb^{-i})\Bigr)\cdot(x^i-\xb^i)\Biggr)\geq-C_g|x-\xb|^2\]
\end{assumption}

 Note that if $C_{L,x},C_g=0$, we say that a displacement monotonicity condition is satisfied instead. 

\begin{assumption}\label{assumption DxL}
    $D_aL^i,D_xL^i,D_xg^i$ are Lipschitz continuous with respect to all variables uniformly. Moreover, suppose conditions 1, 2, 3, 4 in Assumption \ref{Assumption L} hold.
\end{assumption}

The proof of the following stability result is based on the method in \cite[Proposition 4.1]{jackson2025quantitativeconvergencedisplacementmonotone}.

\begin{theorem}\label{stability theorem}
    Under Assumption \ref{Assumption semimon}, \ref{assumption DxL}, let $\b\geq0,$ $(w,m,\a),(\overline{w},\mb,\alb)$ be two solutions to \eqref{DNash} with initial measures $\m,\Bar{\m}$ respectively. Then there exists a constant $C>0$ such that
    \[\sup_{t\in[0,T]}\sum_{i=1}^NW_2(m^i_t,\mb^i_t)\leq C\sum^N_{i=1}W_2(\m^i,\overline{\m}^i)\]
\end{theorem}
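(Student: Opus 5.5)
We will compare the two equilibria at the level of the Pontryagin systems \eqref{PMP}, coupled so that the states sit on a common probability space. Choose initial data $(\xi^i,\overline{\xi}^i)$ forming an optimal coupling of $(\m^i,\overline{\m}^i)$ for $W_2$; this is possible because $\Om$ is Polish and atomless. Keep the couplings independent across $i$ and drive both systems with the same Brownian motions $B^i$. Let $(X^i,Y^i,Z^i)$ and $(\Xb^i,\Yb^i,\overline{Z}^i)$ be the associated solutions, with $Y^i_t=D_xw^i(t,X^i_t)$, $\Yb^i_t=D_x\overline{w}^i(t,\Xb^i_t)$, and controls $A^i_t=\a^i(t,X^i_t)$, $\Ab^i_t=\alb^i(t,\Xb^i_t)$. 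Since $W_2^2(m^i_t,\mb^i_t)\le\E|X^i_t-\Xb^i_t|^2$, it is enough to bound $\sup_t\E|X_t-\Xb_t|^2$ by $C\sum_i\E|\xi^i-\overline{\xi}^i|^2$. The conclusion then follows after taking square roots and using the equivalence of $\ell^1$ and $\ell^2$ norms on $\R^N$, with $N$ fixed.

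The key step is to rewrite the lifted quantities through the original ones. By Lemma \ref{Dudley lemma}, since $X^i_t$ is $\sF^i_t$-measurable and independent of $X^{-i}_t$, we have
\[D_a\Lt^i(X^i_t,A^i_t,m^{-i}_t,\a^{-i}_t)=\E\bigl[D_aL^i(X^i_t,A^i_t,X^{-i}_t,A^{-i}_t)\,|\,\sF^i_t\bigr].\]
The analogous identities hold for $D_x\Ht^i=-D_x\Lt^i$ and $D_x\gt^i$. The first-order condition \eqref{first order optimality condition} then gives $Y^i_t=-\E[D_aL^i(X^i_t,A^i_t,X^{-i}_t,A^{-i}_t)|\sF^i_t]$.

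We apply It\^o's product rule to $(X^i_t-\Xb^i_t)\cdot(Y^i_t-\Yb^i_t)$. The noise cancels in $d(X^i-\Xb^i)$ because the Brownian motions are common. Take expectations and sum over $i$. Each conditional expectation can be removed by the tower property, because it multiplies the $\sF^i_t$-measurable quantities $X^i-\Xb^i$ or $A^i-\Ab^i$; here we need the extra conditioning on $\overline{\xi}^i$, so we enlarge $\sF^i$ to include it, which independence still allows. The result is
\begin{align*}
\sum_i\E\bigl[(X^i_T-\Xb^i_T)\cdot(D_xg^i-D_x\overline{g}^i)\bigr]-\sum_i\E\bigl[(\xi^i-\overline{\xi}^i)\cdot(Y^i_0-\Yb^i_0)\bigr]
=-\int_0^T\sum_i\E\Bigl[(D_aL^i-D_a\overline{L}^i)\cdot(A^i_t-\Ab^i_t)+(D_xL^i-D_x\overline{L}^i)\cdot(X^i_t-\Xb^i_t)\Bigr]dt.
\end{align*}
Applying Assumption \ref{Assumption semimon} pointwise gives
\[C_{L,a}\,\E\!\int_0^T|A-\Ab|^2\,dt\le C_g\,\E|X_T-\Xb_T|^2+C_{L,x}\!\int_0^T\E|X_t-\Xb_t|^2\,dt+\E\bigl[(\xi-\overline{\xi})\cdot(Y_0-\Yb_0)\bigr].\]

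We then use $X_t-\Xb_t=\xi-\overline{\xi}+\int_0^t(A_s-\Ab_s)\,ds$ together with Young's inequality with a small parameter $\e$. This gives $\E|X_t-\Xb_t|^2\le(1+\e^{-1})\E|\xi-\overline{\xi}|^2+(1+\e)\,t\!\int_0^t\E|A-\Ab|^2$. Substituting, the coefficient of $\E\int|A-\Ab|^2$ becomes $C_{L,a}-(1+\e)(TC_g+\tfrac{T^2}{2}C_{L,x})$, which is positive for $\e$ small thanks to $C_{disp}>0$.

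The main obstacle is the boundary term $\E[(\xi-\overline{\xi})\cdot(Y_0-\Yb_0)]$. I would bound $\E|Y_0-\Yb_0|^2$ by the Lipschitz continuity in Assumption \ref{assumption DxL}, using the BSDE for $Y-\Yb$ in which the driver and terminal term are conditional expectations of Lipschitz functions. Jensen's inequality for conditional expectation then yields $\E|Y_0-\Yb_0|^2\le C\bigl(\E|X_T-\Xb_T|^2+\int_0^T\E(|X-\Xb|^2+|A-\Ab|^2)\bigr)$. The expression for $Y^i_t$ through $D_aL^i$ combined with strong convexity also keeps these $A$-terms controlled. Together with Gr\"onwall this gives $\E|Y_0-\Yb_0|^2\le C\bigl(\E|\xi-\overline{\xi}|^2+\E\int|A-\Ab|^2\bigr)$. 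A second Young inequality, $\E[(\xi-\overline{\xi})\cdot(Y_0-\Yb_0)]\le C_\d\E|\xi-\overline{\xi}|^2+\d\,\E|Y_0-\Yb_0|^2$ with $\d$ small, absorbs this term into the left-hand side. This yields $\E\int|A-\Ab|^2\le C\E|\xi-\overline{\xi}|^2$, and then $\sup_t\E|X_t-\Xb_t|^2\le C\E|\xi-\overline{\xi}|^2$, which proves the theorem.
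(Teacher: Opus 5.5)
Your proposal is correct and follows essentially the same route as the paper. You couple the two PMP systems through an optimal coupling of the initial data, and use Lemma \ref{Dudley lemma} with the tower property to replace $D_a\Lt^i$, $D_x\Ht^i$, $D_x\gt^i$ by the unlifted quantities, so that Assumption \ref{Assumption semimon} applies after It\^o's formula on $\sum_i(X^i-\Xb^i)\cdot(Y^i-\Yb^i)$. You then close the boundary term $\E[(\xi-\overline{\xi})\cdot(Y_0-\Yb_0)]$ with a BSDE estimate for $Y_0-\Yb_0$ and Young's inequality, as the paper does.

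Two points the paper leaves implicit are handled explicitly in your version: enlarging $\sF^i$ to contain $\overline{\xi}^i$, and passing from the squared bound to the stated $\ell^1$ bound with an $N$-dependent constant.
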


\begin{proof}
    We work with the PMP system \eqref{PMP}. Let us define $A^i_t:=\a^i(t,X^i_t)$ and note that 
    $$A^i_t=-D_p\Ht^i(X^i_t,Y^i_t,m^{-i}_t,\alb^{-i}_t).$$ 
    Next, we define $\Yb^i_t:=D_x\overline{w}^i(t,\Xb^i_t),\ \Ab^i_t:=\alb(t,\Xb^i_t)$, where $\Xb$ solves the forward SDE in \eqref{PMP} associated to $(\overline{w},\mb,\alb)$. 

    \medskip
    As $D_a\Lt^i(x,-D_p\Ht^i(x,p,m^{-i}_t,\alb^{-i}_t),m^{-i}_t,\a^{-i}_t)=-p,$
    \begin{align*}
       \E[(A^i_t-\Ab^i_t)\cdot(Y^i_t-\Yb^i_t)]&=-\E\Bigl[(A^i_t-\Ab^i_t)\cdot\bigl(D_a\Lt^i(X^i_t,A^i_t,m^{-i}_t,\a^{-i}_t)-D_a\Lt^i(\Xb^i_t,\Ab^i_t,\mb^{-i}_t,\alb^{-i}_t)\bigr)\Bigr]\\
       &=-\E\Bigl[\E\Bigl[(A^i_t-\Ab^i_t)\cdot\bigl(D_aL^i(X^i_t,A^i_t,X^{-i}_t,A^{-i}_t)-D_aL^i(\Xb^i_t,\Ab^i_t,\Xb^{-i}_t,\Ab^{-i}_t)\bigr)\Bigr|\sF^i_t\Bigr]\Bigr]\\
       &=-\E\Bigl[(A^i_t-\Ab^i_t)\cdot\bigl(D_aL^i(X^i_t,A^i_t,X^{-i}_t,A^{-i}_t)-D_aL^i(\Xb^i_t,\Ab^i_t,\Xb^{-i}_t,\Ab^{-i}_t)\bigr)\Bigr]
    \end{align*}
    where we used Lemma \ref{Dudley lemma} and the tower law of conditional expectation.

    \medskip

    Similarly, as $D_x\Ht^i(x,p,m^{-i}_t,\a^{-i}_t)=-D_x\Lt^i(x,-D_p\Ht^i(x,p,m^{-i}_t,\a^{-i}_t),m^{-i}_t,\a^{-i}_t),$ we have
    \begin{align*}
        &\E\left[\bigl(D_x\Ht^i(X^i_t,Y^i_t,m^{-i}_t,\a^{-i}_t)-D_x\Ht^i(\Xb^i_t,\Yb^i_t,\mb^{-i}_t,\alb^{-i}_t)\bigr)\cdot(X^i_t-\Xb^i_t)\right]\\
        =-&\E\Bigl[\E\Bigl[\bigl(D_xL^i(X^i_t,A^i_t,X^{-i}_t,A^{-i}_t)-D_xL^i(\Xb^i_t,\Ab^i_t,\Xb^{-i}_t,\Ab^{-i}_t)\bigr)\cdot(X^i_t-\Xb^i_t)\Bigr|\sF^i_t\Bigr]\Bigr]\\
        =-&\E\Bigl[\bigl(D_xL^i(X^i_t,A^i_t,X^{-i}_t,A^{-i}_t)-D_xL^i(\Xb^i_t,\Ab^i_t,\Xb^{-i}_t,\Ab^{-i}_t)\bigr)\cdot(X^i_t-\Xb^i_t)\Bigr].
    \end{align*}
    Lastly,
    \[\E\left[\bigl(D_x\gt^i(X^i_T,m^{-i}_T)-D_x\gt^i(\Xb^i_T,\mb^{-i}_T)\bigr)\cdot(X^i_T-\Xb^i_T)\right]=\E\Bigl[\bigl(D_xg^i(X^i_T,X^{-i}_T)-D_xg^i(\Xb^i_T,\Xb^{-i}_T)\bigr)\cdot(X^i_T-\Xb^i_T)\Bigr].\]
    By It\^o's formula, the idendities derived above and Assumption \ref{Assumption semimon}
    \begin{align*}
        &\sum^N_{i=1}\E\Bigl[(Y^i_0-\Yb^i_0)\cdot(X^i_0-\Xb^i_0)\Bigr]\\
        =&\sum^N_{i=1}\E\Biggl[\bigl(D_x\gt^i(X^i_T,m^{-i}_T)-D_x\gt^i(\Xb^i_T,\mb^{-i}_T)\bigr)\cdot(X^i_T-\Xb^i_T)-\int^T_0 (A^i_t-\Ab^i_t)\cdot(Y^i_t-\Yb^i_t)\\
        &\ \ \ \ \ \ \ \ \ \ \ \ -\bigl(D_x\Ht^i(X^i_t,Y^i_t,m^{-i}_t,\a^{-i}_t)-D_x\Ht^i(\Xb^i_t,\Yb^i_t,\mb^{-i}_t,\alb^{-i}_t)\bigr)\cdot(X^i_t-\Xb^i_t)\ dt\Biggr]\\
        \geq&\E\left[-C_g|X_T-\Xb|^2+\int^T_0C_{L,a}|A_t-\Ab_t|^2-C_{L,x}|X_t-\Xb_t|^2\ dt\right]. 
    \end{align*}
    Following the arguments of \cite[Proposition 4.1]{jackson2025quantitativeconvergencedisplacementmonotone} (see also similar arguments in the proof of Theorem \ref{conv thm 1}), for all $\e>0$, there is $C_\e>0$ such that 
    \begin{equation}\label{drift estimate}
    \E\left[\int^T_0|A_t-\Ab_t|^2\ dt\right]\leq\E\Bigl[\e|Y_0-\Yb_0|^2+C_\e|X_0-\Xb_0|^2\Bigr].\end{equation}
    Next, by a similar argument as \cite[Theorem 4.2.3]{Zhang2017}
    \begin{align*}
        &\E\Bigl[|Y_0-\Yb_0|^2\Bigr]\\
        \leq C&\sum^N_{i=1}\E\Biggl[|D_x\gt^i(X^i_t,m^{-i}_t)-D_x\gt^i(\Xb^i_t,\mb^{-i}_t)|^2
        \\&+\left(\int^T_0 |D_x\Ht^i(X^i_t,Y^i_t,m^{-i}_t,\a^{-i}_t)-D_x\Ht^i(\Xb^i_t,\Yb^i_t,\mb^{-i}_t,\alb^{-i}_t)|\ dt\right)^2\Biggr]\\
        \leq C&\E\Biggl[|X_T-\Xb_T|^2+T\sum^N_{i=1}\int^T_0 |D_xL^i(X^i_t,A^i_t,X^{-i}_t,A^{-i}_t)-D_xL^i(\Xb^i_t,\Yb^i_t,\Xb^{-i}_t,\Ab^{-i}_t)|^2\ dt\Biggr]
\\ \leq C&\E\left[\sup_{t\in[0,T]}|X_t-\Xb_t|^2+\int^T_0|A_t-\Ab_t|^2\ dt\right]\\
\leq C&\e\E\Bigl[|Y_0-\Yb_0|^2\Bigr]+C_\e\E\Bigl[|X_0-\Xb_0|^2\Bigr]+C\E\left[\sup_{t\in[0,T]}|X_t-\Xb_t|^2\right]
    \end{align*}
    where we used Lemma \ref{Dudley lemma}, conditional Jensen's inequality and (\ref{drift estimate}).

    \medskip

    For $\e>0$ small enough, we arrive at the estimate
    \begin{equation}\label{Y estimate}
        \E\Bigl[|Y_0-\Yb_0|^2\Bigr]\leq\frac{1}{1-C\e}\left(C_\e\E\Bigl[|X_0-\Xb_0|^2\Bigr]+C\E\left[\sup_{t\in[0,T]}|X_t-\Xb_t|^2\right]\right).
    \end{equation}
    Lastly, arguing analogously as \cite[Theorem 3.2.2]{Zhang2017}, we have
    \begin{align*}
        \E\left[\sup_{t\in[0,T]}|X_t-\Xb_t|^2\right]&\leq C\E\left[|X_0-\Xb_0|^2+T\int^T_0 |A_t-\Ab_t|^2\ dt\right]
        \\&\leq C_\e\E\Bigl[|X_0-\Xb_0|^2\Bigr]+\e \E\Bigl[|Y_0-\Yb_0|^2\Bigr]\\
        &\leq C_\e\E\Bigl[|X_0-\Xb_0|^2\Bigr]+\frac{\e}{1-C\e}\left(C_\e\E\Bigl[|X_0-\Xb_0|^2\Bigr]+C\E\left[\sup_{t\in[0,T]}|X_t-\Xb_t|^2\right]\right),
    \end{align*}
    where we used (\ref{drift estimate}) in the second line and (\ref{Y estimate}) in the third line.

    \medskip

    For $\e>0$ small enough, $X_0,\Xb_0$ such that $\E\Bigl[|X_0-\Xb_0|^2\Bigr]=\sum^N_{i=1}W_2^2(\m^i,\overline{\m}^i)$, we derive
    \[\sup_{t\in[0,T]}\sum^N_{i=1}W^2_2(m^i_t,\mb^i_t)\leq\E\left[\sup_{t\in[0,T]}|X_t-\Xb_t|^2\right]\leq C\E\Bigl[|X_0-\Xb_0|^2\Bigr]=C\sum^N_{i=1}W_2^2(\m^i,\overline{\m}^i).\]
    \end{proof}

\begin{corollary}\label{uniqueness cor}
    If $\m=\overline{\m}$, then $(w,m)=(\overline{w},\mb)$. Moreover, $\a^i(t,\cdot)=\alb^i(t,\cdot)$ on the support of $m^i_t$.
\end{corollary}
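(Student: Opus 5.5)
Apply Theorem \ref{stability theorem} with $\m=\overline{\m}$. Its right-hand side is zero, so $m^i_t=\mb^i_t$ for all $t\in[0,T]$ and all $i$. The proof of that theorem gives more. We may couple the initial data by $X_0=\Xb_0=\xi$, so that $\E[|X_0-\Xb_0|^2]=0$. Then \eqref{drift estimate}, \eqref{Y estimate} and the final estimate give
\[
\E\Bigl[\sup_{t}|X_t-\Xb_t|^2\Bigr]=0,\qquad \E\Bigl[\int_0^T|A_t-\Ab_t|^2dt\Bigr]=0,\qquad \E\bigl[|Y_0-\Yb_0|^2\bigr]=0.
\]
Hence $X^i_t=\Xb^i_t$ a.s. for every $t$ (by path continuity) and $A^i_t=\Ab^i_t$ for a.e. $t$ a.s.

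\textbf{Equality of the controls on the support.} Since $A^i_t=\a^i(t,X^i_t)$, $\Ab^i_t=\alb^i(t,X^i_t)$ and $X^i_t\sim m^i_t$, we get
\[
\int_{\Rd}|\a^i(t,x)-\alb^i(t,x)|^2\,dm^i_t(x)=0 \quad\text{for a.e. } t.
\]
Both fields are continuous in $(t,x)$, since they lie in $C([0,T];W^{1,\i})$ or $C([0,T];\Lip)$. So $\a^i(t,\cdot)=\alb^i(t,\cdot)$ on $\spt(m^i_t)$ for a.e. $t$.

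To pass to every $t$, take $x\in\spt(m^i_t)$ and $t_n\to t$. Weak continuity of $t\mapsto m^i_t$ gives points $x_n\in\spt(m^i_{t_n})$ with $x_n\to x$ (lower semicontinuity of supports under weak convergence). Joint continuity of the fields then passes the equality to the limit.

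\textbf{Equality of the value functions.} Because $m=\mb$, the lifted data seen by player $i$ coincide except through the last argument $\a^{-i}$ versus $\alb^{-i}$. Those functions appear only inside $\int L^i(\cdot,f^{-i}(x^{-i}))\,dm^{-i}$ or inside $\gt^i$. The lifted Hamiltonian $\Ht^i(x,p,m^{-i}_t,f^{-i})$ depends on $f^{-i}$ only through its values $m^{-i}_t$-a.e., that is, on $\prod_{j\ne i}\spt(m^j_t)$. By the previous step these values agree, so
\[
\Ht^i(\cdot,\cdot,m^{-i}_t,\a^{-i}_t)=\Ht^i(\cdot,\cdot,\mb^{-i}_t,\alb^{-i}_t).
\]
The terminal data $\gt^i(\cdot,m^{-i}_T)$ also coincide. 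Therefore $w^i$ and $\overline w^i$ solve the same HJB equation with the same terminal condition. Uniqueness of classical (or viscosity) solutions with bounded gradient, which the paper uses in the existence proofs via the control representation, gives $w^i=\overline w^i$.

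\textbf{Main obstacle.} The delicate point is this last identification. Uniqueness of the HJB equation needs its coefficients to agree exactly as functions of $(t,x,p)$, and this requires the support equality for every $t$, not merely for a.e. $t$. The continuity argument above is what supplies this. If it fails, I would use instead that the HJB Hamiltonians agree for a.e. $t$, which is enough for uniqueness of the value-function representation because the cost is a time integral.
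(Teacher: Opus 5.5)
Your proposal is correct and follows the paper's own argument. You specialise the proof of Theorem~\ref{stability theorem} to the coupling $X_0=\Xb_0$, read off $X=\Xb$, $Y_0=\Yb_0$ and the vanishing of \eqref{drift estimate}, use Fubini to get $\a^i(t,\cdot)=\alb^i(t,\cdot)$ $m^i_t$-a.e., and conclude $w=\overline{w}$ from the value-function representation of the HJB solutions. Your extra steps make explicit two points the paper leaves implicit: upgrading the equality from a.e.\ $t$ to every $t$ via joint continuity and lower semicontinuity of supports, and noting that $\Ht^i$ depends on $\a^{-i}_t$ only through its $m^{-i}_t$-a.e.\ values.
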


\begin{proof}
    It follows from the proof of Theorem \ref{stability theorem} that $X_t=\Xb_t$ almost surely, $m_t=\mb_t$. In turn, \eqref{Y estimate} gives $Y_0=\Yb_0$, consequently the left hand side of \eqref{drift estimate} vanishes. Applying Fubini's theorem on (\ref{drift estimate}) gives
    \[0=\E\left[\int^T_0|\a^i(t,X^i_t)-\alb^i(t,\Xb^i_t)|^2\ dt\right]=\int^T_0\int_\Rd|\a^i(t,x)-\alb^i(t,x)|^2\ dm^i_t(x)dt.\]
    Hence $\a^i_t=\alb^i_t$ on the support of $m^i_t$, appealing to the representation formula for $w^i,\overline{w}^i$ as value functions of optimal control problems, we also obtain $w=\overline{w}.$
\end{proof}

\section{The Mean Field Game and the Convergence Results}\label{section MFG}

\subsection{The Mean Field Game}

Naturally, to be able to talk about the convergence problem in this section we need to impose assumptions that are compatible with Assumptions \ref{Assumption L}, \ref{Assumption L loc}, \ref{Assumption semimon} and \ref{assumption DxL}, so that the $N$-player games with costs as in (\ref{MF cost functions}) are wellposed. 
%

\begin{assumption}\label{assumption MF}\
    \begin{enumerate}
    \item $m_0\in\prob_q(\Rd)$ for some $q>2.$ 
    \item $\R^d\times\Rd\times\prob(\Rd\times\Rd)\ni(x,a,\m)\mapsto L(x,a,\m)\in C^1$ are convex functions of the variables $(x,a)$ for all $\m$.
        
        \item $\textnormal{There exists } C>0,\ \textnormal{such that}\ L(x,a,\m)\geq\th_1(|a|)-C$ where $\th_1:[0,\infty)\rightarrow[0,\infty)$ is superlinear.

        \item $\textnormal{There exists }\l_{\max}\geq\l_{\min}>0$ such that $\l_{\min}I\leq D^2_{aa}L\leq\l_{\max}I$ uniformly over all variables.
        \item $L$ is Lipschitz continuous and semiconcave in the variable $x$ uniformly over all other variables. Moreover, for all $R>0,\ |x|, |a|\leq R,\textnormal{there exists }   C_R>0$ such that $ |L(x,a,\m)|\leq C_R(1+W_2(\m,\d_0))$

        \item $g\in C^1$ is convex, Lipschitz continuous and semiconcave in the variable $x$ uniformly.

        \item $D_aL\in C^1$ is Lipschitz continuous in all variables uniformly (w.r.t. $W_2$ in the measure variable).

        \item $D_xL$ is Lipschitz continuous in all variables uniformly (w.r.t. $W_2$ in the measure variable), bounded in the variable $x$ uniformly. To be precise, for all $R>0,\ x\in\Rd, |a|\leq R,\textnormal{there exists }   C_R>0$ such that $ |D_xL(x,a,\m)|\leq C_R$.

        \item $\l_{\min}^{-1} \norm{D_m^aD_aL}_{\infty,\op}<1$ 

    \end{enumerate}
As in Assumption \ref{Assumption L loc}, one can weaken some of the assumptions, whenever $\b=0.$
\end{assumption}
\begin{remark}
For a continuously intrinsically differentiable vector field $F:\prob_2(\Rd\times\Rd)\rightarrow\Rd$ with components $F=(F_1,\ldots,F_d)$, we write $D^a_mF_n(m)(x,a)$ to denote the last $d$ components of the full intrinsic derivative $D_mF_n(m)(x,a)$ and we write $D^a_mF(m)(x,a)$ for the full $d\times d$ matrix.

\medskip

Now for the finite dimensional projection
\[F^N(x^1,\ldots,x^N,a^1,\ldots,a^N)=F\left(\frac{1}{N}\sum_{i=1}^N\d_{x^i,a^i}\right),\]
we have $D_{a^j}F^N(x^1,\ldots,x^N,a^1,\ldots,a^N)=\frac{1}{N}D^a_mF\left(\frac{1}{N}\sum_{i=1}^N\d_{x^i,a^i}\right)(x^j,a^j).$

\medskip

Hence, condition 9 of Assumption \ref{assumption MF} implies condition 9 of Assumption \ref{Assumption L}, when the running costs $L^i$ of the $N$-player game each take the form of (\ref{MF cost functions}).    
\end{remark}

\begin{assumption}
    \label{Assumption MF semimon}
There exist constants $C_{L,a}>0,\ C_{L,x},C_g\geq0$ such that
\begin{equation*}
    C_{disp}:=C_{L,a}-TC_g-\frac{T^2}{2}C_{L,x}>0,
\end{equation*}
 for all $X,\Xb,\a,\alb\in L^2(\Om)$,
    \begin{align*}
        \E\Biggl[& \Bigl(D_aL(X,\a,\Leb(X,\a))-D_aL(\Xb,\alb,\Leb(\Xb,\alb))\Bigr)\cdot(\a-\alb) \\
        &+\Bigl(D_xL(X,\a,\Leb(X,\a))-D_xL(\Xb,\alb,\Leb(\Xb,\alb))\Bigr)\cdot(X-\Xb)\Biggr]\geq C_{L,a}\E\Bigl[|\a-\alb|^2\Bigr]-C_{L,x}\E\Bigl[|X-\Xb|^2\Bigr]
    \end{align*}
and
\[\E\Bigl[\Bigl(D_xg(X,\Leb(X))-D_xg(\Xb,\Leb(\Xb))\Bigr)\cdot(X-\Xb)\Biggr]\geq-C_g\E\Bigl[|X-\Xb|^2\Bigr]\]
\end{assumption}

\begin{lemma}\label{semimon lemma}
    $L^i$ and $g^i$ defined as in \eqref{MF cost functions} with $L$ and $g$ satisfying Assumption \ref{Assumption MF semimon} also satisfy  Assumption \ref{Assumption semimon} provided $N\in\N$ is large enough. In particular, if $\norm{D^2_{mm}L^i},\norm{D^2_{mm}g^i}\leq\g$, then denoting the new semimonotonicity constants of Assumption \ref{Assumption semimon} as $\widetilde{C}_{L,a},\widetilde{C}_g,\widetilde{C}_{L,x}$, we have
    \[\widetilde{C}_{L,a}=C_{L,a}-\frac{\g}{N},\ \widetilde{C}_{L,x}=C_{L,x}+\frac{\g}{N},\ \widetilde{C}_g=C_g+\frac{\g}{N}.\]
\end{lemma}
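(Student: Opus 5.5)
We will compare the finite-dimensional sum in Assumption \ref{Assumption semimon} with the infinite-dimensional inequality in Assumption \ref{Assumption MF semimon} evaluated at empirical measures. Fix $(x,a),(\xb,\ab)\in(\Rd)^N\times(\Rd)^N$ and realise them on an atomless probability space: let $I$ be uniform on $\{1,\dots,N\}$ and set $X=x^I$, $\a=a^I$, $\Xb=\xb^I$, $\alb=\ab^I$. Then $\Leb(X,\a)=m^N_{x,a}:=\frac1N\sum_j\d_{(x^j,a^j)}$, and the left-hand side of Assumption \ref{Assumption MF semimon} becomes $\frac1N\sum_i\bigl[(D_aL(x^i,a^i,m^N_{x,a})-D_aL(\xb^i,\ab^i,m^N_{\xb,\ab}))\cdot(a^i-\ab^i)+(D_xL(\cdot)-D_xL(\cdot))\cdot(x^i-\xb^i)\bigr]$. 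The right-hand side becomes $\frac1N\bigl(C_{L,a}|a-\ab|^2-C_{L,x}|x-\xb|^2\bigr)$. After multiplying by $N$, this is exactly the inequality of Assumption \ref{Assumption semimon}, except that the measure argument is the full empirical measure $m^N_{x,a}$ instead of $m^{N,-i}_{x,a}$.

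The second step is to control the error from replacing $m^N_{x,a}$ by $m^{N,-i}_{x,a}$. We write, for instance, $D_aL(x^i,a^i,m^{N,-i}_{x,a})-D_aL(x^i,a^i,m^N_{x,a})$, and do the same for the barred quantities. We then take the difference of these two error terms and pair it with $a^i-\ab^i$. Since $m^N-m^{N,-i}=\frac1N\bigl(\d_{(x^i,a^i)}-m^{N,-i}\bigr)$, the interpolation $m_s=m^{N,-i}+s(m^N-m^{N,-i})$ gives a linear-derivative representation
\[D_aL(\cdot,m^N)-D_aL(\cdot,m^{N,-i})=\frac1N\int_0^1\!\!\int \frac{\delta D_aL}{\delta m}(\cdot,m_s)(y)\,d(\d_{(x^i,a^i)}-m^{N,-i})(y)\,ds .\]
Taking the difference of this expression at barred and unbarred arguments and using the bound $\gamma$ on second-order measure derivatives, both in the state/control argument and through the difference of measures, we get a term of size at most $\frac{\g}{N}\bigl(|x^i-\xb^i|+|a^i-\ab^i|+\frac{1}{N-1}\sum_{j\ne i}(|x^j-\xb^j|+|a^j-\ab^j|)\bigr)$. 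Pairing it with $|a^i-\ab^i|$ or $|x^i-\xb^i|$, applying Young's inequality, and summing over $i$ yields an error of at most $\frac{C\g}{N}(|a-\ab|^2+|x-\xb|^2)$. After absorbing the constants into $\gamma$, this gives $\widetilde C_{L,a}=C_{L,a}-\g/N$ and $\widetilde C_{L,x}=C_{L,x}+\g/N$. The argument for $g$ is identical and gives $\widetilde C_g=C_g+\g/N$.

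Finally, $\widetilde C_{disp}=C_{disp}-\frac{\g}{N}(1+T+T^2/2)$, which is positive for $N$ large. This proves the lemma.

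The main obstacle is the second step. The difficulty is making the $O(1/N)$ error appear as the difference of two quantities, evaluated at $(x,a)$ and at $(\xb,\ab)$, so that it is bounded by $|x-\xb|+|a-\ab|$ and not merely by a constant. This is why a second-order bound $\norm{D^2_{mm}}\le\g$, rather than only a Lipschitz bound on $D_mL$, is needed. I would carry this out with the flat-derivative interpolation above, coupling the barred and unbarred interpolating measures through the same index.
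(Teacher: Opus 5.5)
\textbf{Step 1 is fine; Step 2 is where the proof breaks.} Realising $(x,a),(\xb,\ab)$ through a uniform index does turn Assumption~\ref{Assumption MF semimon} into the $N$-player inequality, with the full empirical measure $m^N_{x,a}$ in place of $m^{N,-i}_{x,a}$. The problem in Step 2 is not a matter of constants.

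The signed measure $m^N_{x,a}-m^{N,-i}_{x,a}=\frac1N\bigl(\d_{(x^i,a^i)}-m^{N,-i}_{x,a}\bigr)$ is $O(1/N)$ in mass, but not in $W_1$ or $W_2$. Because it has zero total mass, your interpolation formula only sees the oscillations $\frac{\d D_aL}{\d m}(z,m_s)(x^i,a^i)-\frac{\d D_aL}{\d m}(z,m_s)(x^j,a^j)$. Bounded intrinsic derivatives control these oscillations, and their Lipschitz constants in $(z,m_s)$, only up to a factor $|(x^i,a^i)-(x^j,a^j)|$. So the difference of your barred and unbarred errors actually contains terms like
$\frac{\g}{N}\cdot\frac{1}{N-1}\sum_{j\neq i}|(x^i,a^i)-(x^j,a^j)|\,\bigl(|x^i-\xb^i|+|a^i-\ab^i|+W_1(m_s,\mb_s)\bigr)$.
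The spread of the configuration can be of order $N$, which cancels the $1/N$. Your stated bound implicitly assumes that $(z,m)\mapsto\frac{\d D_aL}{\d m}(z,m)(y)$ is Lipschitz uniformly in $y$. That is a total-variation type regularity which the hypotheses do not provide.

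\textbf{A sharper estimate of the same term cannot fix this.} Take $d=1$, $g=0$ and $L(x,a,\m)=\frac\l2a^2-\e\cos\bigl(a-\int A\,d\m(y,A)\bigr)$ with $0<\e<\l$.
\begin{itemize}
\item All derivatives of order at least two (in $a$, and intrinsically in $\m$) are bounded by $\l+\e$.
\item Assumption~\ref{Assumption MF semimon} holds with $C_{L,a}=\l-\e$ and $C_{L,x}=C_g=0$, since $\E\bigl[(\sin(\a-\E\a)-\sin(\alb-\E\alb))(\a-\alb)\bigr]\ge-\E[|\a-\alb|^2]$.
\item Your error term is $E_i(a)=\e\bigl[\sin\bigl(a^i-\frac1{N-1}\sum_{j\neq i}a^j\bigr)-\sin\bigl(a^i-\frac1N\sum_ja^j\bigr)\bigr]$.
\item For $N$ odd and $a=(N\pi,0,\dots,0)$ one finds $\p_{a^1}E_1(a)=-\e\frac{2N-1}{N}$.
\item Hence for $\ab=a+te_1$, $\sum_i(E_i(a)-E_i(\ab))(a^i-\ab^i)=-\e\frac{2N-1}{N}t^2+o(t^2)$, which is an order-one loss.
\end{itemize}
Yet a direct computation of the $N$-player Jacobian gives $\widetilde C_{L,a}=\l-\frac{N}{N-1}\e$ in this example. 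So it is the route through $m^N_{x,a}$ that fails, not the conclusion.

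\textbf{What would close the gap.} You need either a different argument, or an extra hypothesis that makes the leave-one-out replacement uniformly $O(1/N)$ at the level of derivatives. Two such hypotheses are:
\begin{itemize}
\item $D_mD_{(x,a)}L$ does not depend on $(x,a,m)$, as for the linear-quadratic costs of Section~\ref{LQ section};
\item $\frac{\d}{\d m}D_{(x,a)}L(x,a,m)(y)$ is bounded and Lipschitz in all its arguments.
\end{itemize}
Your Step 1 does yield exactly the stated $\g/N$ constants if the $N$-player costs are built on the full empirical measure. In that case $D_{a^i}L^i$ differs from $D_aL(x^i,a^i,m^N_{x,a})$ only by the self-interaction term $\frac1N D^a_mL(x^i,a^i,m^N_{x,a})(x^i,a^i)$. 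Its variation is uniformly controlled by second-order derivatives, because $W_1(m^N_{x,a},m^N_{\xb,\ab})\le\frac1N\sum_j(|x^j-\xb^j|+|a^j-\ab^j|)$.

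The paper itself gives no argument here; it simply refers to \cite[Lemma 3.4]{jackson2025quantitativeconvergencedisplacementmonotone}.
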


\begin{proof}
    See \cite[Lemma 3.4]{jackson2025quantitativeconvergencedisplacementmonotone} and its proof.
\end{proof}

The following example is inspired by the example given in \cite[Remark 2.4]{jackson2025quantitativeconvergencedisplacementmonotone}.

\begin{example}\label{example}
Let $L_0:\Rd\times\Rd\times\prob_2(\Rd\times\Rd)\rightarrow\R$ be convex in the first two variables uniformly and bounded below. Furthermore, suppose
\[D_{(x,a)}L_0(x,a,\m)=D_m\sL(\m)(x,a)\]
for some $C^2$ $\varrho$-displacement semiconvex (also called geodesically semiconvex, see \cite{ParkerSomeConvexityCriteria}) $\sL:\prob_2(\Rd\times\Rd)\rightarrow\R$ with bounded first and second intrinsic derivatives. Furthermore, suppose \[D_aD_m^a\sL(\m)(x,a)\geq -\varsigma I,\ \norm{D^2_{mm}\sL}_{\i,\op}\leq \g.\]
Define
\[L(x,a,\m)=\frac{\k}{2}|a|^2+L_0(x,a,\m),\]
for $\k>0$. Then $L$ satisfies all the previous assumptions (except \eqref{C disp} as we have not specified the functions $g$) if 
\[\k-\varsigma>\min\left\{\g,\varrho+\frac{\g}{N}\right\}>0.\]
%
Note that we have used \cite[Theorem 1.2]{ParkerSomeConvexityCriteria} and Lemma \ref{semimon lemma}.
\end{example}

For completeness we also state a wellposedness theorem for the MFG system. We note that the theorem below can be extended to the case $q=2$.
\begin{theorem}\label{existence thm MFG}
    Under Assumption \ref{assumption MF} and \ref{Assumption MF semimon}, there exists a unique solution $(v,\m)$ to the system \eqref{MFG}.
\end{theorem}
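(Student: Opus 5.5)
Existence and uniqueness are handled separately. Existence will come from the same Schauder--Tychonoff scheme as in Theorem \ref{existence thm stocahstic}, now run on the mean field consistency relation. Uniqueness will come from the displacement semimonotonicity of Assumption \ref{Assumption MF semimon}, via the PMP \eqref{PMPMFG}, exactly as in Theorem \ref{stability theorem}.

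\emph{Existence.} I would work on a set $\mathscr{X}$ of feedback fields $\alb:[0,T]\times\Rd\to\Rd$ satisfying
\[
\norm{\alb}_\infty\leq C_3,\qquad |\alb(t,x)-\alb(s,y)|\leq C_9|t-s|^{1/3}+C_2|x-y|.
\]
This set is convex and compact for local uniform convergence. Given $\alb\in\mathscr{X}$, the steps are as follows.
\begin{enumerate}
\item Solve the KFP equation $\p_t m-\b\D m+\na\cdot(m\alb)=0$ with $m_0=\Leb(\xi)$.
\item Set $\m_t=(\textnormal{Id},\alb(t,\cdot))_\sharp m_t$.
\item Solve the HJB equation with the frozen flow $\m_t$ and terminal datum $g(\cdot,m_T)$.
\item Define $S(\alb)(t,x)=-D_pH(x,D_xv(t,x),\m_t)$.
\end{enumerate}
No lifting is needed here: $H$ is the classical Hamiltonian with a frozen measure argument. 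The implicit function argument of Lemma \ref{lemma regularity L} then gives a Lipschitz dependence of $D_pH$ on the second marginal. Concretely,
\[
|D_pH(x,p,(\textnormal{Id},f)_\sharp m)-D_pH(x,p,(\textnormal{Id},\overline f)_\sharp m)|\leq \l_{\min}^{-1}\norm{D_m^aD_aL}_{\infty,\op}\norm{f-\overline f}_\infty ,
\]
since $W_2$ between the two pushforwards is at most $\norm{f-\overline f}_\infty$.

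The remaining ingredients are transferred from Theorem \ref{existence thm stocahstic}:
\begin{itemize}
\item the bounds $\norm{D_xv}_\infty\leq C_0$ and $\norm{D^2_{xx}v}\leq C_1$, from conditions 3, 5, 6;
\item the Kružkov time-H\"older estimate, via the heat equation for $D_xv$ using condition 8;
\item the moment bound $W_2(\m_t,\d_0)\leq C$, from the bound on $\alb$.
\end{itemize}
The contraction condition 9 of Assumption \ref{assumption MF} then lets me choose $C_3$ and $C_9$ so that $S(\mathscr{X})\subset\mathscr{X}$. Continuity of $S$ follows as before from SDE stability and from compactness and uniqueness for the HJB equation. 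A fixed point $\alb$ gives $\m_t=(\textnormal{Id},-D_pH(\cdot,D_xv,\m_t))_\sharp m_t$, which is the consistency relation in \eqref{MFG}.

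The case $\b=0$ would instead be run as in Theorem \ref{existence thm intermediate}, using local bounds and compact support, or alternatively by vanishing viscosity.

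\emph{Uniqueness.} Take two solutions $(v,\m)$ and $(\overline v,\overline\m)$ with the same $m_0$. I would couple their PMP systems \eqref{PMPMFG} on the same $\xi$ and $B$, and apply It\^o's formula to $(Y_t-\Yb_t)\cdot(X_t-\Xb_t)$. Since $X_0=\Xb_0$, the initial term vanishes. The conversion identities $D_aL(X,A,\m)=-Y$ and $D_xH=-D_xL$ turn the remaining terms into the expressions appearing in Assumption \ref{Assumption MF semimon}, with $\m_t=\Leb(X_t,A_t)$. This yields
\[
0\geq C_{L,a}\E\int_0^T|A-\Ab|^2\,dt-C_{L,x}\E\int_0^T|X-\Xb|^2\,dt-C_g\E|X_T-\Xb_T|^2 .
\]
Since $X-\Xb=\int_0^\cdot(A-\Ab)$, bounding the last two terms by Cauchy--Schwarz in time gives $C_{disp}\,\E\int_0^T|A-\Ab|^2\leq 0$. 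Hence $A=\Ab$, $X=\Xb$, and so $\m=\overline\m$. The HJB equations then coincide, so $v=\overline v$ by uniqueness for the HJB equation.

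The main obstacle is the invariance step for the time modulus in the existence part. The bound involves $\norm{\alb(t,\cdot)-\alb(s,\cdot)}_\infty$ through the second marginal, and the constant in front must be strictly less than one. This is precisely where condition 9 enters, together with the observation that $W_2((\textnormal{Id},f)_\sharp m,(\textnormal{Id},\overline f)_\sharp m)\leq\norm{f-\overline f}_\infty$.
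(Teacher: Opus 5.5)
Your argument is correct, but it follows a different route from the paper's proof. For $\beta>0$ the paper uses the fixed-point map $\Phi$ of Jackson--M\'esz\'aros, which satisfies $\Phi(\mathcal{L}(X,Y))=\mathcal{L}(X,-D_pH(X,Y,\Phi(\mathcal{L}(X,Y))))$. With it, \eqref{PMPMFG} becomes a standard McKean--Vlasov FBSDE, whose strong well-posedness the paper imports from that reference. Uniqueness for \eqref{MFG} then holds because every solution induces a solution of this FBSDE. Existence is obtained by solving the HJB equation with the frozen flow $\Phi(\mathcal{L}(X_t,Y_t))$ and identifying $Y_t=D_xv(t,X_t)$ through It\^o's formula and FBSDE uniqueness. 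The imported existence result relies on nondegenerate noise, so the case $\beta=0$ is deferred to Theorem \ref{thm vanishing viscosity MFG}.

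Your existence proof is instead the Schauder scheme on feedback fields, which the paper only mentions in the remark after the proof. Your uniqueness proof is the semimonotonicity computation of Theorem \ref{stability theorem} run directly on two solutions, so $\Phi$ is never needed. The paper's route is short and also yields well-posedness of the PMP system and the Lipschitz map $\Phi$, both reused in Section \ref{sec:vanishing}. Yours is self-contained and not tied to $\beta>0$. The price is redoing all the a priori HJB estimates (convexity, semiconcavity, Kru\v{z}kov time regularity), and it gives nothing about the FBSDE beyond uniqueness.

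Two points should be tightened.

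\emph{The contraction step.} The bound $|D_pH(x,p,(\mathrm{Id},f)_\sharp m)-D_pH(x,p,(\mathrm{Id},\overline f)_\sharp m)|\le\lambda_{\min}^{-1}\|D^a_mD_aL\|_{\infty,\mathrm{op}}\|f-\overline f\|_\infty$ is true. However, the justification you attach to it does not give this constant. Combining $W_2((\mathrm{Id},f)_\sharp m,(\mathrm{Id},\overline f)_\sharp m)\le\|f-\overline f\|_\infty$ with the $W_2$-Lipschitz constant of $D_aL$ from condition 7 only yields $\lambda_{\min}^{-1}\mathrm{Lip}_{W_2}(D_aL)$. That constant also involves the state part of the intrinsic derivative, and condition 9 does not make it smaller than one. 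Instead, interpolate along $s\mapsto(\mathrm{Id},sf+(1-s)\overline f)_\sharp m$, which moves only the control marginal, so only $D^a_mD_aL$ appears. Then use $D^2_{aa}L\ge\lambda_{\min}I$ in the first-order condition. This is exactly where condition 9 enters your invariance step, so the distinction matters.

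\emph{The case $\beta=0$.} Following Theorem \ref{existence thm intermediate} literally covers only compactly supported $m_0$. Under Assumption \ref{assumption MF} the estimates are global, so your scheme also works for general $m_0\in\mathscr{P}_q(\mathbb{R}^d)$, as the paper's remark asserts. A Lipschitz-in-time bound on $D_xv$ replaces Kru\v{z}kov's estimate. Alternatively, pass through vanishing viscosity as the paper does.
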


\begin{proof}
Let $\b>0$, the existence of \eqref{MFG} with $\b=0$ is postponed until Theorem \ref{thm vanishing viscosity MFG}. The PMP system \eqref{PMPMFG} can be reformulated as a typical McKean--Vlasov FBSDE (c.f. \cite{jackson2025quantitativeconvergencedisplacementmonotone}) using a mapping $\Phi:\prob_2(\Rd\times\Rd)\rightarrow\prob_2(\Rd\times\Rd)$ which has the property
\[\Phi(\Leb(X_t,Y_t))=\Leb(X_t,-D_pH(X_t,Y_t,\Phi(\Leb(X_t,Y_t))).\]
It is proven in \cite[Lemma 3.3, Appendix A]{jackson2025quantitativeconvergencedisplacementmonotone}  that such a mapping $\Phi$ exists, is unique and the PMP system \eqref{PMPMFG} is wellposed in the strong sense.

\medskip

The uniqueness of \eqref{MFG} immediately follows as a solution to the MFG system must associate by Pontryagin's maximum principle a solution to the wellposed PMP system (\ref{PMPMFG}).

\medskip

Given the unique strong solution $(X,Y,Z)$ to \eqref{PMPMFG}, let $v\in C^{1,2}([0,T]\times\Rd)$ be the unique classical solution to 

\[\begin{cases}
    -\p_tv(t,x)-\b\D v(t,x)+H(x,D_xv(t,x),\Phi(\Leb(X_t,Y_t)))=0 , &\textnormal{in } (0,T)\times\Rd,\\
    v(T,x)=g(x,\Leb(X_T)), &\textnormal{in } \Rd,
\end{cases}\]
then $(v,\m)$ is a solution to \eqref{MFG}, where we denoted $\m_t=\Phi(\Leb(X_t,Y_t))$. Indeed, by It\^o's formula the triplet $(X_t,D_xv(t,X_t),\sqrt{2\b}D_{xx}^2v(t,X_t))$ satisfies the FBSDE \eqref{PMPMFG}, by uniqueness we must have $Y_t=D_x v(t,X_t)$. Considering the aforementioned property of the mapping $\Phi$, we find that the remaining equations in \eqref{MFG} are satisfied by $(v,\m)$.
\end{proof}

 \begin{remark}
     We can also directly obtain the existence of solutions to \eqref{MFG} (for all $\b\geq0$ and $q\geq 2$) using a similar argument as in the proof of Theorem \ref{existence thm stocahstic} and  \ref{existence thm intermediate}, i.e.  a fixed point scheme $S(\alb)=\a$ given by 
    \[\a(t,x)=-D_pH(x,D_xv(t,x),\m_t)\]
    where $(v,\m)$ is the unique solution to
    \begin{equation*}
    \begin{cases}
        \p_tm_t-\b\D m_t+\na\cdot(m_t \alb_t)=0, &\textnormal{in } (0,T)\times\Rd,\\
        \m_t=(\textnormal{Id},\alb(t,\cdot))_\sharp m_t, &\textnormal{in } [0,T],
        \\-\p_tv(t,x)-\b\D v(t,x)+H(x,D_xv(t,x),\m_t)=0, &\textnormal{in } (0,T)\times\Rd,\\ m_0=\Leb(\xi),\ \ v(T,x)=g(x,m_T) , &\textnormal{in } \Rd.
    \end{cases}
\end{equation*}
 \end{remark}

\subsection{Convergence to the Mean Field Game}\label{section convergence}

\begin{theorem}\label{conv thm 1}Suppose 
    \begin{itemize}[label=\raisebox{0.25ex}{\tiny$\bullet$}]
    \item Assumption \ref{assumption MF} and \ref{Assumption MF semimon} hold true;
    \item $\b\geq0,N\in\N$; 
         \item   $(\widehat{\xi}^i)_{i=1,\ldots,N}$ are $L^q$-i.i.d. random variables, with common law $\Leb(\widehat{\xi}^i)=m_0\in\prob_q(\Rd)$, for some $q>2$, $q\notin\{4,\frac{d}{d-2}\}$, $i=1,\ldots,N$.
     \end{itemize} 
     Then there exists a constant $C>0$ independent of $N\in\N$ such that
    \[\sup_{i=1,\ldots,N}\E\Biggl[\sup_{t\in[0,T]}|X^i_t-\Xh^i_t|^2+\int^T_0|A^i_t-\Ah^i_t|^2\ dt\Biggr]\leq Cr_{d,q}(N)\]
        \begin{itemize}[label=\raisebox{0.25ex}{\tiny$\bullet$}]
        \item $X=(X^1,\dots,X^N)$ is the first component of the solution to \eqref{PMP} with initial condition $\widehat{\xi} = (\widehat{\xi}^1,\dots,\widehat{\xi}^N)$;
        \item $\Xh=(\Xh^1,\dots,\Xh^N)$ is the first component of the solution to \eqref{iidPMPMFG} with initial condition $\widehat{\xi} = (\widehat{\xi}^1,\dots,\widehat{\xi}^N)$;
        \item $(w,m,\a)$ is the solution to the $N$-player system \eqref{DNash} with $m^i_0=\Leb(\xi^i)\in \prob_q(\Rd)$;
        \item $(v,\m)$ is the solution to the MFG system \eqref{MFG};
        \item $A,\Ah$ are the corresponding control processes given by $A^i_t=\a^i(t,X^i_t)$ and $ \Ah_t^i=-D_pH(\Xh^i_t,\Yh^i_t,\m_t)$.
    \end{itemize} 
    
\medskip

    We also denoted $r_{d,q}(N)$ as the Fournier--Guillin rate \cite{Fournier2013OnTR}
\[r_{d,q}(N)=\begin{cases}
    N^{-\frac{1}{2}}+N^{-\frac{q-2}{q}}\ &d<4\\
    N^{-\frac{1}{2}}\log(1+N)+N^{-\frac{q-2}{q}}\ &d=4\\
    N^{-\frac{2}{d}}+N^{-\frac{q-2}{q}}\ &d>4.
\end{cases}\]
\end{theorem}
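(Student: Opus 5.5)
The plan is forward--backward propagation of chaos: compare the $N$-player PMP system \eqref{PMP} (mean field costs \eqref{MF cost functions}, initial data $\widehat{\xi}$) with the $N$ i.i.d. copies \eqref{iidPMPMFG}, using the displacement semimonotonicity of Lemma \ref{semimon lemma} as in the proof of Theorem \ref{stability theorem}. Write $\Delta X^i=X^i-\Xh^i$, $\Delta Y^i=Y^i-\Yh^i$, $\Delta A^i=A^i-\Ah^i$. Note $\Delta X_0=0$. The i.i.d. system also satisfies an $N$-player PMP structure, with a perturbation. Indeed, $\Ah^i_t$ satisfies $-\Yh^i_t=D_aL(\Xh^i_t,\Ah^i_t,\m_t)$. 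Since $\m_t=\Leb(\Xh^i_t,\Ah^i_t)$ and the pairs are i.i.d. and independent across $i$, we may write
\[
-\Yh^i_t=D_aL\bigl(\Xh^i_t,\Ah^i_t,m^{N,-i}_{\Xh_t,\Ah_t}\bigr)+E^{i,a}_t .
\]
Similarly, the backward drift and terminal condition carry error terms $E^{i,x}_t$ and $E^{i,g}_T$. Each error is bounded by $C\,W_2(m^{N,-i}_{\Xh_t,\Ah_t},\m_t)$, using the Lipschitz bounds in conditions 7, 8 of Assumption \ref{assumption MF} and condition 6. By the Fournier--Guillin theorem \cite{Fournier2013OnTR}, applied to $N-1$ i.i.d. samples in $\R^{2d}$ with $q$-th moments, this gives $\E|E^{i,\cdot}_t|^2\le C r_{d,q}(N)$. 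Here the $q$-moment bound on $(\Xh,\Ah)$ uniform in $t$ comes from $\widehat{\xi}\in L^q$ and the boundedness of the MF optimal control. The dimension is $2d$ rather than $d$; I would track this, and expect either a slightly modified rate or an argument exploiting that $\Ah^i_t$ is a Lipschitz function of $\Xh^i_t$.

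The key identity is obtained next. On the $N$-player side, Lemma \ref{Dudley lemma} and the tower law (as in Theorem \ref{stability theorem}) turn the lifted quantities $D_a\Lt^i,D_x\Ht^i,D_x\gt^i$ into the unlifted $D_aL^i,D_xL^i,D_xg^i$ evaluated at $(X^i,A^i,X^{-i},A^{-i})$. Apply It\^o's formula to $\sum_i\Delta X^i_t\cdot\Delta Y^i_t$. Since $\Delta X_0=0$, after taking expectations the left side vanishes. Lemma \ref{semimon lemma} then yields, for $N$ large,
\[
\widetilde C_{L,a}\,\E\!\int_0^T|\Delta A_t|^2dt-\widetilde C_{L,x}\E\!\int_0^T|\Delta X_t|^2dt-\widetilde C_g\E|\Delta X_T|^2\le \sum_i\E\Bigl[|E^{i,g}_T||\Delta X^i_T|+\int_0^T\bigl(|E^{i,a}_t||\Delta A^i_t|+|E^{i,x}_t||\Delta X^i_t|\bigr)dt\Bigr].
\]
Use $|\Delta X_t|^2\le t\int_0^t|\Delta A_s|^2ds$, which holds since $\Delta X_0=0$ and the noises cancel. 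Together with $C_{disp}>0$ (perturbed by $O(1/N)$, hence positive for large $N$), Young's inequality gives
\[
\E\int_0^T|\Delta A|^2+\E\sup_t|\Delta X_t|^2\le C\sum_i\sup_t\E|E^{i,\cdot}_t|^2\le CN r_{d,q}(N).
\]
For small $N$ the estimate is trivial by uniform bounds.

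This is the averaged bound. The statement asks for $\sup_i$, and upgrading it is the step I expect to be the main obstacle. By exchangeability, uniqueness from Corollary \ref{uniqueness cor}, and the symmetric costs \eqref{MF cost functions} with i.i.d. initial data, the law of $(X^i,A^i,\Xh^i,\Ah^i)$ is the same for every $i$. This holds because permuting players maps equilibria to equilibria, so the equilibrium feedbacks $\a^i$ coincide (on supports) and the joint law is symmetric. Hence each individual term equals $\frac1N$ of the sum, giving $\le Cr_{d,q}(N)$. The delicate points are justifying this symmetry rigorously via uniqueness, and the uniform $q$-moment bounds needed for Fournier--Guillin. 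The latter I would get from boundedness of $\a^i$ (the set $\mathscr{X}$ in Theorem \ref{existence thm stocahstic}) for $\b>0$, and from the analogous compact-support bounds for $\b=0$.
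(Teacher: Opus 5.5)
Your proposal is correct and follows essentially the same route as the paper. The common steps are: Itô's formula on $\sum_i \Delta X^i\cdot\Delta Y^i$ with $\Delta X_0=0$; Lemma \ref{Dudley lemma} to remove the lifting; the $N$-player semimonotonicity after inserting $m^{N,-i}_{\Xh_t,\Ah_t}$; Young's inequality together with Fournier--Guillin, which gives the averaged bound of order $N r_{d,q}(N)$; and finally exchangeability plus Corollary \ref{uniqueness cor} to obtain the individual bound. Your dimension concern ($2d$ versus $d$) is resolved exactly as you suggest. Since $\Ah^j_t=\alpha(t,\Xh^j_t)$ with $\alpha(t,\cdot)=-D_pH(\cdot,D_xv(t,\cdot),\mu_t)$ Lipschitz, both $\mu_t$ and $m^{N,-i}_{\Xh_t,\Ah_t}$ are pushforwards under $(\mathrm{Id},\alpha(t,\cdot))$, so $W_2(\mu_t,m^{N,-i}_{\Xh_t,\Ah_t})\le C\,W_2(m_t,m^{N,-i}_{\Xh_t})$, and the rate $r_{d,q}$ in dimension $d$ is justified.
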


\begin{proof}
    Again we produce an estimate on the difference of the drifts by employing semimonotonicity, we use Lemma \ref{Dudley lemma} and \ref{semimon lemma} (note that we still rely on the monotonicity condition from Assumption \ref{Assumption semimon}).
  \begin{align*}
       -&\sum_{i=1}^N\E\Bigl[(A^i_t-\Ah^i_t)\cdot(Y^i_t-\Yh^i_t)\Bigr]+\E\biggl[\Bigl(D_x\Ht(X^i_t,Y^i_t,m^{-i}_t,\a^{-i}_t)-D_xH(\Xh^i_t,\Yh^i_t,\m_t)\Bigr)\cdot(X^i_t-\Xh^i_t)\biggr]\\
       =&\sum_{i=1}^N\E\Biggl[(A^i_t-\Ah^i_t)\cdot\Bigl(D_a\Lt(X^i_t,A^i_t,m^{-i}_t,\a^{-i}_t)-D_aL(\Xh^i_t,\Ah^i_t,\m_t)\Bigr)\\
       &\ \ \ \ \ \ \ \ \ \ \ \ \  \ \ \ +\Bigl(D_x\Lt(X^i_t,A^i_t,m^{-i}_t,\a^{-i}_t)-D_xL(\Xh^i_t,\Ah^i_t,\m_t\bigr)\Bigr)\cdot(X^i_t-\Xh^i_t)\Biggr]\\
       =&\sum_{i=1}^N\E\Biggl[\E\Biggl[(A^i_t-\Ah^i_t)\cdot\Bigl(D_aL(X^i_t,A^i_t,m^{N,-i}_{X_t,A_t})-D_aL(\Xh^i_t,\Ah^i_t,\m_t)\Bigr)\\
        &\ \ \ \ \ \ \ \ \ \ \ \ \  \ \ \ 
        +\Bigl(D_xL(X^i_t,A^i_t,m^{N,-i}_{X_t,A_t})-D_xL(\Xh^i_t,\Ah^i_t,\m_t\bigr)\Bigr)\cdot(X^i_t-\Xh^i_t)\biggr|\sF^i_t\Biggr]\Biggr]\\
       =&\sum_{i=1}^N\E\Biggl[(A^i_t-\Ah^i_t)\cdot\Bigl(D_aL(X^i_t,A^i_t,m^{N,-i}_{X_t,A_t})-D_aL(\Xh^i_t,\Ah^i_t,\m_t)\Bigr)\\&\ \ \ \ \ \ \ \ \ \ \ \ \  \ \ \ 
        +\Bigl(D_xL(X^i_t,A^i_t,m^{N,-i}_{X_t,A_t})-D_xL(\Xh^i_t,\Ah^i_t,\m_t\bigr)\Bigr)\cdot(X^i_t-\Xh^i_t)\Biggr]
        \\ \geq&\E\Biggl[C_{L,a}|A_t-\Ah_t|^2-C_{L,x}|X_t-\Xh_t|^2-C_{DL}\sum_{i=1}^N\left(\Bigl(|A^i_t-\Ah^i_t|+|X^i_t-\Xh^i_t|\Bigr)W_2\left(\m_t,m^{N,-i}_{\Xh_t,\Ah_t}\right)\right)\Biggr]\\
        \geq&\E\Biggl[(C_{L,a}-\d)|A_t-\Ah_t|^2-(C_{L,x}+\d)|X_t-\Xh_t|^2-C_\d C_N\Biggr]
    \end{align*}
where we used Young's inequality in the last line with a small parameter $\d>0$ to be fixed later, the quantity $C_N$ is given by
\begin{equation}\label{CN}
C_N=\sum_{i=1}^N\E\biggl[W_2^2\left(\m_t,m^{N,-i}_{\Xh_t,\Ah_t}\right)\biggr]    
\end{equation}
and $C_{DL},$ $C_\d$ are constants independent of $N\in\N$. Note that we used the fact $\Leb(\Xh^i_t,\Ah^i_t)=\m_t$ for all $i=1,\ldots,N,$ which follows from (\ref{compatibility condition}).

\medskip

Similarly,
\begin{align*}
    &\E\left[\bigl(D_x\gt(X^i_T,m^{-i}_T)-D_xg(\Xh^i_T,m_T)\bigr)\cdot(X^i_T-\Xh^i_T)\right]\\
=&\E\left[\E\biggl[\bigl(D_xg(X^i_T,m_{X_T}^{N,i})-D_xg(\Xh^i_T,m_T)\bigr)\cdot(X^i_T-\Xh^i_T)\Bigr|\sF^i_t\biggr]\right]\geq\E\Bigl[-(C_{g}+\d)|X^i_T-\Xh^i_T|^2-C_\d C_N\Bigr].
\end{align*}
Using It\^o's formula on the quantity $\sum^N_{i=1}(Y^i_t-\Yh^i_t)\cdot(X^i_t-\Xh^i_t)$ on the whole time interval $t\in[0,T]$, we derive the estimate on the difference of the drifts 
\begin{align*}
    &\E\left[(C_{L,a}-\d)\int^T_0|A_t-\Ah_t|^2\ dt\right]\\
    \leq&\E\left[(C_g+\d)|X_T-\Xh_T|^2+(C_{L,x}+\d)\int^T_0|X_t-\Xh_t|^2\ dt\right]+C_\d C_N.    
\end{align*}
Integrating over $[0,T]$ the inequality
\[\E\Bigl[|X_t-\Xh_t|^2\Bigr]\leq\E\Biggl[t\int^t_0|A_s-\Ah_s|^2\ ds\Biggr]\]
we can further estimate 
\begin{align*}
    \E\left[(C_{L,a}-\d)\int^T_0|A_t-\Ah_t|^2\ dt\right]
    \leq\E\left[\left( T(C_g+\d)+\frac{T^2}{2}(C_{L,x}+\d)\right)\int^T_0|A_t-\Ah_t|^2\ dt\right]+C_\d C_N .   
\end{align*}
Which implies
\begin{align*}
    \E\left[\left(C_{disp}-\d\left(1+T+\frac{T^2}{2}\right)\right)\int^T_0|A_t-\Ah_t|^2\ dt\right]
    \leq C_\d C_N .   
\end{align*}
Upon choosing $\d>0$ small enough, arguing analogously as \cite[Theorem 3.2.2]{Zhang2017} we find
\begin{equation}\label{C d estimate}
    \E\Biggl[\sup_{t\in[0,T]}|X_t-\Xh_t|^2+\int^T_0|A_t-\Ah_t|^2\ dt\Biggr]\leq C_\d C_N.
\end{equation}

The quantity $C_N$ can be estimated using \cite[Theorem 1]{Fournier2013OnTR} and \cite[(5.1)]{jackson2025quantitativeconvergencedisplacementmonotone}
    \begin{align*}
C_N&=\sum_{i=1}^N\E\biggl[W_2^2\left(\m_t,m^{N,-i}_{\Xh_t,\Ah_t}\right)\biggr]\\
    &\leq 2\sum_{i=1}^N\E\biggl[W_2^2\left(\m_t,m^{N}_{\Xh_t,\Ah_t}\right)+W_2^2\left(m^{N}_{\Xh_t,\Ah_t},m^{N,-i}_{\Xh_t,\Ah_t}\right)\biggr]\\
    &\leq 2NCr_{d,q}(N)+\frac{2}{N(N-1)}\sum_{i=1}^N\E\left[\sum_{j\neq i}\left(|\Xh^i_t-\Xh^j_t|^2+|\Ah^i_t-\Ah^j_t|^2\right)\right]\\
    &\leq2NCr_{d,q}(N)+2\left(\int_{\Rd\times\Rd}(|x|^q+|a|^q)\ d\m_t(x,a)\right)^\frac{2}{q}\\
    &\leq \widehat{C}Nr_{d,q}(N),
\end{align*}
where $\widehat{C}>0$ is a constant independent of $N\in\N$.

\medskip

As $X_t^i$ are exchangeable random variables, appealing to the uniqueness of distributed equilibria given by Corollary \ref{uniqueness cor} one can argue as in \cite[Lemma 3.12]{jacksontangpi} to conclude the proof.
\end{proof}

\begin{remark}
    While one can derive an \say{uniform in $N$ stability} extension of Theorem \ref{stability theorem} analogous to \cite[Proposition 4.1]{jackson2025quantitativeconvergencedisplacementmonotone}, it seems not possible to argue as in \cite[Section 5]{jackson2025quantitativeconvergencedisplacementmonotone} to obtain the convergence result above, as we cannot directly compare the functions $D_p\Ht$ and $D_pH.$ 

    \medskip

    For the same reasoning one cannot directly compare distributed equilibria and open-loop equilibria as done in \cite[Section 5.2]{cirant2025nonasymptoticapproachstochasticdifferential}, at least when the Hamiltonian is nonseparable. However, one can now easily derive a bound on the difference (in the sense of the theorem above) between distributed and closed/open-loop equilibria via comparing both to mean field equilibria.    
\end{remark}

We can extend the previous theorem to the non-exchangeable case when the initial conditions of the $N$-player system do not necessarily match those of the i.i.d. mean field system (\ref{iidPMPMFG}), combining the techniques of Theorem \ref{stability theorem} and \ref{conv thm 1}, we only sketch the proof to avoid repetition.
\begin{theorem}\label{conv thm2}Suppose 
       \begin{itemize}[label=\raisebox{0.25ex}{\tiny$\bullet$}]
       \item Assumption \ref{assumption MF} and \ref{Assumption MF semimon} hold true;
       \item  $\b\geq0, N\in\N;$
         \item  $(\xi^i)_{i=1,\ldots,N}$ are $L^q$-independent random variables;
         \item   $(\widehat{\xi}^i)_{i=1,\ldots,N}$ are $L^q$-i.i.d. random variables, with common law $\Leb(\widehat{\xi}^i)=m_0\in\prob_q(\Rd)$, for some $q>2$, $q\notin\{4,\frac{d}{d-2}\}$, $i=1,\ldots,N$.
     \end{itemize}
     Then there exists a constant $C>0$ independent of $N\in\N$ such that
    \[\frac{1}{N}\sum^N_{i=1}\E\Biggl[\sup_{t\in[0,T]}|X^i_t-\Xh^i_t|^2+\int^T_0|A^i_t-\Ah^i_t|^2\ dt\Biggr]\leq C(K(N) +r_{d,q}(N))\]
    where \begin{itemize}[label=\raisebox{0.25ex}{\tiny$\bullet$}]
        \item $X=(X^1,\dots,X^N)$ is the first component of the solution to \eqref{PMP} with initial condition $\xi = (\xi^1,\dots,\xi^N)$;
        \item $\Xh=(\Xh^1,\dots,\Xh^N)$ is the first component of the solution to \eqref{iidPMPMFG} with initial condition $\widehat{\xi} = (\widehat{\xi}^1,\dots,\widehat{\xi}^N)$;
        \item $(w,m,\a)$ is the solution to the $N$-player system \eqref{DNash} with $m^i_0=\Leb(\xi^i)\in \prob_q(\Rd)$;
        \item $(v,\m)$ is the solution to the MFG system \eqref{MFG};
        \item  $A,\Ah$ are the corresponding control processes given by $A^i_t=\a^i(t,X^i_t)$ and $ \Ah_t^i=-D_pH(\Xh^i_t,\Yh^i_t,\m_t)$. 
    \end{itemize}  and 
    \[K(N):=\frac{1}{N}\sum_{i=1}^N W_2^2(m_0,\Leb(\xi^i_N)).\]
    %
\end{theorem}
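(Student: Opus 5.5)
The plan is to insert an intermediate coupling and use the triangle inequality. Let $\Xh$ be the i.i.d. mean field trajectories from \eqref{iidPMPMFG} driven by $\widehat{\xi}$. Let $\Xt$ denote the distributed equilibrium trajectories from \eqref{PMP} for the $N$-player game started from $\widehat{\xi}$. We cannot directly compare $X$ with $\Xh$ via exchangeability, because the $\xi^i$ are not identically distributed. Instead we run the monotonicity computation of Theorem \ref{conv thm 1} directly between $(X,Y)$ and $(\Xh,\Yh)$, keeping the initial discrepancy as in Theorem \ref{stability theorem}.

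Step 1 is the It\^o expansion. Apply It\^o's formula to $\sum_i(Y^i_t-\Yh^i_t)\cdot(X^i_t-\Xh^i_t)$. Use Lemma \ref{Dudley lemma} to pass from $\Lt,\gt$ to $L,g$ evaluated at $m^{N,-i}_{X_t,A_t}$. Then use Lemma \ref{semimon lemma} (for $N$ large; small $N$ is absorbed into the constant) together with the Lipschitz dependence in the measure variable, with the comparison measure $m^{N,-i}_{\Xh_t,\Ah_t}$ against $\m_t$. Exactly as in Theorem \ref{conv thm 1}, this yields
\[
\sum_i\E\bigl[(Y^i_0-\Yh^i_0)\cdot(X^i_0-\Xh^i_0)\bigr]\ \ge\ \E\Bigl[-(C_g+\d)|X_T-\Xh_T|^2+\int_0^T(C_{L,a}-\d)|A_t-\Ah_t|^2-(C_{L,x}+\d)|X_t-\Xh_t|^2\,dt\Bigr]-C_\d\sup_t C_N,
\]
with $C_N$ as in \eqref{CN}.

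Step 2 handles the left-hand side, which no longer vanishes. Since $dX-d\Xh=(A-\Ah)dt$, we have $\E|X_t-\Xh_t|^2\le(1+\e)\E|X_0-\Xh_0|^2+C_\e t\int_0^t\E|A_s-\Ah_s|^2ds$. Integrating this as before and using $C_{disp}>0$, we obtain a bound of the form \eqref{drift estimate}:
\[
\E\int_0^T|A-\Ah|^2\le \e\,\E|Y_0-\Yh_0|^2+C_\e\,\E|X_0-\Xh_0|^2+C\,C_N .
\]
The BSDE estimate \eqref{Y estimate}, adapted to this coupling, gives $\E|Y_0-\Yh_0|^2\le C(\E\sup_t|X_t-\Xh_t|^2+\E\int_0^T|A-\Ah|^2+C_N)$. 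Closing the loop for small $\e$ as in Theorem \ref{stability theorem} then yields
\[
\E\Bigl[\sup_t|X_t-\Xh_t|^2+\int_0^T|A_t-\Ah_t|^2dt\Bigr]\le C\bigl(\E|\xi-\widehat{\xi}|^2+C_N\bigr).
\]

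Step 3 controls the data. Because the space is atomless and Polish, we may realise $\widehat{\xi}^i$ as an optimal coupling of $\Leb(\xi^i)$ with $m_0$. This can be done independently across $i$, with $\widehat{\xi}^i$ measurable with respect to $\sigma(\xi^i,U^i)$ for independent auxiliary uniforms; the statements concern only laws, so this choice is harmless. Then $\E|\xi-\widehat{\xi}|^2=NK(N)$. The Fournier--Guillin bound from Theorem \ref{conv thm 1} gives $C_N\le \widehat{C}Nr_{d,q}(N)$, which uses only that the $\Xh^i$ are i.i.d. with law $\m_t$. Dividing by $N$ gives the claimed estimate.

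The main obstacle is Step 2. The BSDE estimate for $Y$ involves $D_xL$ evaluated at $m^{N,-i}_{X,A}$ versus $\m_t$, and the cross terms in the measure argument have to be routed through $m^{N,-i}_{\Xh,\Ah}$. Doing so keeps all constants independent of $N$, since $W_2^2(m^{N,-i}_{X,A},m^{N,-i}_{\Xh,\Ah})\le\frac1{N-1}\sum_j(|X^j-\Xh^j|^2+|A^j-\Ah^j|^2)$, which sums over $i$ to $O(1)$ times the full error. I would also check that the adaptedness needed for the filtration $\sF^i$ survives the auxiliary randomisation, by enlarging $\sF^i_0$.
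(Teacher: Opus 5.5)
Your argument is essentially the paper's own sketch: It\^o's formula on $\sum_i(Y^i_t-\Yh^i_t)\cdot(X^i_t-\Xh^i_t)$ with the time-zero term kept, Young's inequality, the BSDE bound on $\E|Y_0-\Yh_0|^2$ from the proof of Theorem \ref{stability theorem}, absorption for small $\e$, and finally $\E|X_0-\Xh_0|^2=NK(N)$ and $C_N\le\widehat{C}Nr_{d,q}(N)$. The triangle-inequality detour through $\Xt$ in your first paragraph is never used and can be deleted.

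There is one bookkeeping slip in Step 2: the weights in your splitting are reversed. You need $\E|X_t-\Xh_t|^2\le C_\e\E|X_0-\Xh_0|^2+(1+\e)\,t\int_0^t\E|A_s-\Ah_s|^2\,ds$. The condition $C_{disp}>0$ only gives $C_{L,a}>TC_g+\frac{T^2}{2}C_{L,x}$. It does not give $C_{L,a}>C_\e\bigl(TC_g+\frac{T^2}{2}C_{L,x}\bigr)$ for a large $C_\e$. A large constant in front of $\E|X_0-\Xh_0|^2$, on the other hand, is harmless.

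The optimal coupling in Step 3 is not harmless merely because only laws matter. The left-hand side depends on the joint law of $(\xi^i,\widehat{\xi}^i)$: for independent $\xi^i,\widehat{\xi}^i$ with $\Leb(\xi^i)=m_0$, it does not vanish even though $K(N)=0$. So this coupling, independent across $i$ and compatible with the enlarged filtrations as you note, is really an implicit hypothesis of the statement. The paper uses it too, without saying so, when it writes $K(N)=\frac{1}{N}\E\bigl[|X_0-\Xh_0|^2\bigr]$.
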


\begin{proof}[Sketch of Proof]
   Following the proof of Theorem \ref{conv thm 1}, when we apply It\^o's formula on the quantity $\sum^N_{i=1}(Y^i_t-\Yh^i_t)\cdot(X^i_t-\Xh^i_t)$ on the whole time interval $t\in[0,T]$, we derive instead the following estimate on the difference of the drifts
\begin{align*}
    &\E\left[(C_{L,a}-\d)\int^T_0|A_t-\Ah_t|^2\ dt\right]\\
    \leq&\E\left[(Y_t-\Yh_t)\cdot(X_t-\Xh_t)+(C_g+\d)|X_T-\Xh_T|^2+(C_{L,x}+\d)\int^T_0|X_t-\Xh_t|^2\ dt\right]+C_\d C_N.    
\end{align*}
In turn \eqref{C d estimate} becomes (after applying Young's inequality with parameter $\e>0$ to be fixed)
\begin{equation}\label{C d estimate 2}
    \E\Biggl[\sup_{t\in[0,T]}|X_t-\Xh_t|^2+\int^T_0|A_t-\Ah_t|^2\ dt\Biggr]\leq \E\Bigl[\e|Y_0-\Yh_0|^2+C_\e|X_0-\Xh_0|^2\Bigr]+C_\d C_N.
\end{equation}
Now following similar arguments as in the proof of Theorem \ref{stability theorem}, for $\e>0$ small enough, we arrive at the estimate, where the constant $C>0$ is independent of $N\in\N$
    \begin{equation*}
        \E\Bigl[|Y_0-\Yh_0|^2\Bigr]\leq\frac{1}{1-C\e}\left(C_\e\E\Bigl[|X_0-\Xh_0|^2\Bigr]+C\E\left[\sup_{t\in[0,T]}|X_t-\Xh_t|^2\right]+C_\d C_N\right).
    \end{equation*}
Plugging the inequality above into \eqref{C d estimate 2} and take $\e>0$ small enough yields the desired inequality, noting that 
\[K(N)=\frac{1}{N}\sum_{i=1}^N W_2^2(m_0,\Leb(\xi^i_N))=\frac{1}{N}\E\Bigl[|X_0-\Xh_0|^2\Bigr].\]
\end{proof}

Next we give a concentration inequality based on \cite[Theorem 2]{Fournier2013OnTR}, the proof of which is essentially the same as the proof of \cite[Theorem 13]{LauriereConvMFGC}.

\begin{corollary}\label{Corollart concentration}
In the setting of the previous theorem, suppose further that either $q>4$ or there is $\s\neq 2,\g>0$ such that $\mathcal{E}_{\s,\g}(m_0)<\i$, where
\[\mathcal{E}_{\s,\g}(m_0)=\int_\Rd e^{\g|x|^\s}\ dm_0(x).\]
Then there exist constants $C,c>0$ independent of $N\in\N,\e>0$ such that for all $\e>0$
    \[\mathbb{P}\left(\sup_{t\in[0,T]} W_2^2(m^N_{X_t},\Leb(\Xh_t))>\e\right)\leq C\left(\frac{1}{\e}\Bigl(K(N)+r_{d,q}(N)\Bigr)+a_{\e}(N)\mathds{1}_{\{\e\leq2\}}+b_{\e}(N)\right),\]
    where
    \[a_{\e}(N)=\begin{cases}
        \exp(-cN\e^2) &d<4\\
        \exp\left(-cN\e^2\left(\log(1+\frac{1}{\e})\right)^{-2}\right) &d=4\\
        \exp(-cN\e^{\frac{d}{2}}) &d>4
    \end{cases}\]
    and
    \[b_{\e}(N)=\begin{cases}
        N(N\e)^{\frac{-(q-\e)}{2}} &q>4,\e<4\\
        \exp(-cN\e^\frac{\s}{2})\mathds{1}_{\{\e>2\}} &\a>2,\mathcal{E}_{\s,\g}(m_0)<\i\\
        \exp(-c(N\e)^\frac{\s-\e}{2})\mathds{1}_{\{\e\leq2\}}+\exp(-c(N\e)^\frac{\s}{2})\mathds{1}_{\{\e>2\}} &\e<\s<2, \mathcal{E}_{\s,\g}(m_0)<\i.
    \end{cases}\]
\end{corollary}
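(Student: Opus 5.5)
The bound will follow from a triangle inequality in $W_2$ that separates a deterministic part, controlled by Theorem \ref{conv thm2}, from a random empirical-measure fluctuation, controlled by the Fournier--Guillin concentration inequality \cite[Theorem 2]{Fournier2013OnTR}. This is the route of \cite[Theorem 13]{LauriereConvMFGC}. Write $m^N_{X_t}=\frac1N\sum_i\d_{X^i_t}$, $m^N_{\Xh_t}=\frac1N\sum_i\d_{\Xh^i_t}$ and $m_t=\Leb(\Xh^i_t)$. Then
\[W_2^2(m^N_{X_t},m_t)\le 2W_2^2(m^N_{X_t},m^N_{\Xh_t})+2W_2^2(m^N_{\Xh_t},m_t).\]
Using the identity coupling, $W_2^2(m^N_{X_t},m^N_{\Xh_t})\le \frac1N\sum_i|X^i_t-\Xh^i_t|^2$. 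Therefore
\[
\mathbb{P}\Big(\sup_t W_2^2(m^N_{X_t},m_t)>\e\Big)\le \mathbb{P}\Big(\tfrac1N\sum_i\sup_t|X^i_t-\Xh^i_t|^2>\tfrac\e4\Big)+\mathbb{P}\Big(\sup_t W_2^2(m^N_{\Xh_t},m_t)>\tfrac\e4\Big).
\]

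The first probability I would bound with Markov's inequality together with Theorem \ref{conv thm2}. This gives $\frac{4C}{\e}(K(N)+r_{d,q}(N))$, which is exactly the first term in the claimed estimate.

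The second probability concerns i.i.d. copies of the mean field equilibrium process. If it were taken at a fixed time, \cite[Theorem 2]{Fournier2013OnTR} would apply directly to the i.i.d. sample $(\Xh^i_t)_i$ with law $m_t$, yielding $a_\e(N)\mathds 1_{\{\e\le 2\}}+b_\e(N)$. To use it I need the moment or exponential-moment hypothesis on $m_t$, uniformly in $t$. For $q>4$ this comes from the bounded drift $\|D_pH(\cdot,D_xv,\m_t)\|_\infty$ (Theorem \ref{existence thm MFG}) plus the Gaussian noise, which give $\sup_t\E|\Xh_t|^q\le C(1+\E|\widehat\xi|^q)$. In the exponential case I would use $|\Xh_t|\le|\widehat\xi|+CT+\sqrt{2\b}|B_t|$, which gives $\sup_t\mathcal E_{\s,\g'}(m_t)<\i$ for some $\g'>0$ when $\s\le2$ (and also for $\s>2$ when $\b=0$).

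The main obstacle is the supremum over $t$ inside the second probability, since Fournier--Guillin is a fixed-time statement. I would handle it by discretising time and controlling the increments, as \cite{LauriereConvMFGC} does.
\begin{itemize}
\item Take a grid $t_k=kT/M$. For $t\in[t_k,t_{k+1}]$, couple empirically to obtain $W_2^2(m^N_{\Xh_t},m^N_{\Xh_{t_k}})\le \frac2N\sum_i\big(C^2h^2+2\b\sup_{|s-t_k|\le h}|B^i_s-B^i_{t_k}|^2\big)$ with $h=T/M$. Similarly $W_2^2(m_t,m_{t_k})\le Ch$ by \eqref{continuity of measure flow}.
\item The Brownian modulus term is an average of i.i.d. variables with Gaussian tails. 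Its deviation probability is therefore exponentially small in $N$ once $h\lesssim\e$, via a Chernoff bound, and is absorbed into $a_\e(N)$.
\item Choosing $M\sim C/\e$ and taking a union bound over the grid multiplies the Fournier--Guillin bound by $M$. This polynomial factor in $1/\e$ is absorbed into the exponential terms by adjusting $c$; for the polynomial part of $b_\e(N)$ it is absorbed by a slight loss in the exponent, consistent with the stated form.
\item In the deterministic case $\b=0$ the paths are uniformly Lipschitz, so this step is trivial.
\end{itemize}
Combining the two probabilities, and adjusting $C$ and $c$, gives the stated inequality.
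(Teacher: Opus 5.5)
Your decomposition is the same as the paper's. The paper splits the event into a term with $\sup_t W_2^2(m^N_{X_t},m^N_{\Xh_t})$, handled by Chebyshev and Theorem \ref{conv thm2} exactly as you do, and a term with $\sup_t W_2^2(m^N_{\Xh_t},\Leb(\Xh_t))$. (Your threshold $\e/4$ is the correct one; the paper writes $\e/2$.) To the second term the paper applies \cite[Theorem 2]{Fournier2013OnTR} directly, without addressing that this is a fixed-time inequality. You are right that the supremum in time is the real issue, but your time discretisation does not deliver the stated bound. The union bound over $M\sim C/\e$ grid times yields $\frac{C}{\e}\bigl(a_{\e/C}(N)+b_{\e/C}(N)\bigr)$, and the factor $\e^{-1}$ cannot be absorbed by adjusting $c$. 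For instance, take $d<4$, $q>4$ and $\xi^i\sim m_0$, so that $K(N)=0$ and $r_{d,q}(N)\simeq N^{-1/2}$. Let $c_1$ be the exponential rate your union bound produces and set $\e^2=\log N/(4c_1N)$. Then
\[
\frac{C}{\e}\,e^{-c_1N\e^2}\simeq \frac{N^{1/4}}{\sqrt{\log N}}\longrightarrow\infty,
\]
whereas the claimed right-hand side is $O\bigl((\log N)^{-1/2}+N^{-c/(4c_1)}\bigr)+b_\e(N)\to0$ for any fixed $C,c$. The same happens for $d\geq4$ in the window $1\ll N\e^{d/2}\ll\log N$. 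So your argument only proves a weaker inequality: $a_\e,b_\e$ multiplied by $C(1+\e^{-1})$, or, after splitting into regimes, a logarithmic loss in the first term. The Brownian-modulus term is harmless. It should, however, be absorbed into the first term via $\e^{-1}e^{-cN}\leq C\e^{-1}r_{d,q}(N)$, not into $a_\e(N)$.

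Your remark that $\b=0$ is trivial because paths are Lipschitz in time is also not accurate. That still needs a grid of $\sim\e^{-1/2}$ points and the same union bound, with the same defect. What actually makes $\b=0$ easy is Lipschitz continuity in space. Write $\Xh^i_t=\Phi_t(\widehat\xi^i)$, where $\Phi_t$ is the flow of $x\mapsto -D_pH(x,D_xv(t,x),\m_t)$. This vector field is Lipschitz in $x$ uniformly in $t$, since $D_pH$ and $D_xv$ are, so $\Phi_t$ is $e^{CT}$-Lipschitz. Hence $\sup_tW_2(m^N_{\Xh_t},\Leb(\Xh_t))\leq e^{CT}W_2(m^N_{\widehat\xi},m_0)$. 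A single application of Fournier--Guillin at $t=0$ then gives the stated form, with the moment hypotheses needed only on $m_0$. For $\b>0$ the particles are not pushed forward by a common map, so a genuinely additional argument would be needed. Examples are chaining, or concentration of the Lipschitz functional $(\widehat\xi^i,B^i)_i\mapsto\sup_tW_2(m^N_{\Xh_t},\Leb(\Xh_t))$ together with a bound on its mean. Neither your proposal nor the paper supplies such an argument. The case $\s>2$, $\b>0$ also remains open, as you partly note, because the Gaussian noise destroys the $e^{\g|x|^\s}$ moment of $\Leb(\Xh_t)$.
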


\begin{proof}
    \begin{align*}
    &\PR\left(\sup_{t\in[0,T]} W^2_2(m^N_{X_t},\Leb(\Xh_t))>\e\right)\\
    \leq\ &\PR\left(\sup_{t\in[0,T]} W^2_2\Bigl(m^N_{X_t},m^N_{\Xh_t}\Bigr)>\frac{\e}{2}\right)+\PR\left(\sup_{t\in[0,T]} W^2_2(m^N_{\Xh_t},\Leb(\Xh_t))>\frac{\e}{2}\right)\\
    \leq\ &\frac{2}{\e}\frac{1}{N}\sum_{i=1}^N\E\left[\sup_{t\in[0,T]}|X^i_t-\Xh^i_t|^2\right]+C\Bigl(a_{\e}(N)\mathds{1}_{\{\e\leq2\}}+b_{\e}(N)\Bigr)\\
    \leq\ &C\left(\frac{1}{\e}\Bigl(K(N)+r_{d,q}(N)\Bigr)+a_{\e}(N)\mathds{1}_{\{\e\leq2\}}+b_{\e}(N)\right)
    \end{align*}
    where $C>0$ is a constant independent of $N\in\N$. We used Chebyshev's inequality and \cite[Theorem 2]{Fournier2013OnTR} to estimate the second line, Theorem \ref{conv thm2} to estimate the first term of the third line.
\end{proof}

We can exert a more precise estimate on the value functions. In particular, we can estimate the difference between $w^i$ and $v$ individually, even though we only have an estimate of the average of the difference of the trajectories/drifts. 

\begin{theorem}\label{conv thm 3}
Suppose 
       \begin{itemize}[label=\raisebox{0.25ex}{\tiny$\bullet$}]
       \item Assumption \ref{assumption MF} and \ref{Assumption MF semimon} hold true;
       \item $\b\geq0, N\in\N;$
       \item $(w,m,\a)$ is the solution to the $N$-player system \eqref{DNash} with $m^i_0=\Leb(\xi_N^i)\in\prob_q(\Rd);$
         \item  $(v,\m)$ is the solution to the MFG system \eqref{MFG} with initial condition $m_0\in\prob_q(\Rd),\ q>2,\ q\notin\{4,\frac{d}{d-2}\}$;
         \item  $L,g$ are Lipschitz continuous in the measure variable with respect to the 2-Wasserstein topology.
     \end{itemize}
     Then there exists a constant $C>0$ independent of $N\in\N$ such that
    \[\max_{i=1,\ldots,N}\norm{w^i-v}_\infty\leq C\left(K(N)+r_{d,q}(N)\right)^\frac{1}{2}\]
    where we recall the quantities $K(N),r_{d,q}(N)$ are defined as in Theorems \ref{conv thm 1} and \ref{conv thm2}.
\end{theorem}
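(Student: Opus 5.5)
Fix $i$ and $(t_0,x)\in[0,T]\times\Rd$. The plan is to represent both $w^i(t_0,x)$ and $v(t_0,x)$ as values of optimal control problems for a single player driven by the same Brownian motion $B^i$ and started from $x$ at time $t_0$. The only difference between the two problems is the environment: $(m^{-i}_s,\a^{-i}_s)$ in the $N$-player game (through the lifted costs $\Lt^i,\gt^i$), and $(\m_s,m_T)$ in the MFG. Both value functions are infima over the same class of $\sF^i$-progressively measurable controls, and the admissible controls may be restricted to a bounded set uniformly in $N$ (the feedback drifts are bounded by $C_3$, and the cost is coercive). Hence
\[|w^i(t_0,x)-v(t_0,x)|\le \sup_{u}\E\Bigl[\int_{t_0}^T\bigl|\Lt^i(Y_s,u_s,m^{-i}_s,\a^{-i}_s)-L(Y_s,u_s,\m_s)\bigr|\,ds+\bigl|\gt^i(Y_T,m^{-i}_T)-g(Y_T,m_T)\bigr|\Bigr],\]
where $Y$ is the controlled state. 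Applying Lemma \ref{Dudley lemma}, together with the independence of $Y$ from $X^{-i}$, I will rewrite the $N$-player costs as $\E[L(Y_s,u_s,m^{N,-i}_{X_s,A_s})\,|\,\sF^i_s]$. Lipschitz continuity of $L$ and $g$ in the measure variable then bounds the integrand by $C\,\E\bigl[W_2(m^{N,-i}_{X_s,A_s},\m_s)\bigr]$. Note that $Y,u$ disappear from this bound, because the Lipschitz constant is uniform in $(x,a)$; if it only holds locally in $(x,a)$, the control bound handles it. The terminal term is handled the same way with $W_2(m^{N,-i}_{X_T},m_T)$.

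Next, by the triangle inequality,
\[\E\,W_2(m^{N,-i}_{X_s,A_s},\m_s)\le \E\,W_2(m^{N,-i}_{X_s,A_s},m^{N,-i}_{\Xh_s,\Ah_s})+\E\,W_2(m^{N,-i}_{\Xh_s,\Ah_s},\m_s).\]
For the first term, the coupling given by the empirical indices yields
\[\Bigl(\tfrac{1}{N-1}\sum_{j\ne i}\E\bigl[|X^j_s-\Xh^j_s|^2+|A^j_s-\Ah^j_s|^2\bigr]\Bigr)^{1/2}.\]
Integrating in time and applying Cauchy--Schwarz and Theorem \ref{conv thm2} bounds this by $C(K(N)+r_{d,q}(N))^{1/2}$, using $\frac{N}{N-1}\le 2$. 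The second term is the Fournier--Guillin estimate already used in bounding $C_N$ in the proof of Theorem \ref{conv thm 1}. The $\Xh^j$ are i.i.d. with laws $\m_s$ at each time, so this term is bounded by $C\,r_{d,q}(N)^{1/2}$ via Jensen, with $q$-moments of $\m_s$ bounded uniformly in $s$ and $N$. The same argument handles the terminal term. All constants are independent of $i,t_0,x$, which gives the maximum over $i$ in sup norm.

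The main obstacle is the averaging over the other players. Theorem \ref{conv thm2} controls only the average over all $N$ players, whereas the estimate for player $i$ involves the average over $j\ne i$. This is exactly why the bound on $w^i$ can be made individual: dropping one index costs at most a factor $N/(N-1)$. A second point needs care: the lifted cost integrates against the product of the laws $m^j_s$ rather than against a coupling. I will justify passing to the random empirical measure by Lemma \ref{Dudley lemma}, applied to the function $(y,x^{-i})\mapsto L\bigl(y,a,\frac{1}{N-1}\sum_{j\ne i}\d_{(x^j,\a^j(s,x^j))}\bigr)$, and then realise $X^{-i}$ on the same probability space as $\Xh^{-i}$ via \eqref{PMP} and \eqref{iidPMPMFG}, as in Theorem \ref{conv thm2}.
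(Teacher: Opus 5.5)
Your argument is correct, but it takes a different route from the paper.

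\textbf{The paper's route.} The paper works directly with the HJB equations. It uses the variational definition of $\Ht$ with the test value $a=-D_pH(x,D_xv,\m_t)$ to bound $H(x,D_xv,\m_t)-\Ht(x,D_xw^i,m^{-i}_t,\a^{-i}_t)$ by $(D_xv-D_xw^i)\cdot D_pH+C(t)$. It then compares $w^i-v$, corrected by the accumulated error $C_T+\int C(s)\,ds$, with a linear parabolic equation with drift $D_pH(x,D_xv,\m_t)$ via the maximum principle on $\Rd$; this requires checking a growth condition on $w^i,v$. The other inequality comes from exchanging the roles of $(v,H)$ and $(w^i,\Ht)$. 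The case $\b=0$ is handled separately through the comparison principle for Hamilton--Jacobi equations.

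\textbf{Your route.} You compare the two value functions directly through their control representations driven by the same $B^i$. Everything then reduces to a bound on the cost difference that is uniform in $(y,a)$. The paper's Hamiltonian inequality is exactly the infinitesimal form of your step of inserting the optimal control of one problem into the other.

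\textbf{Common input.} The quantitative ingredients are the same in both proofs:
Lipschitz dependence of $L,g$ on the measure; the index coupling together with Theorem \ref{conv thm2}, with the factor $N/(N-1)$; and the Fournier--Guillin bound for the i.i.d. term.

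\textbf{Trade-offs.} Your version treats $\b>0$ and $\b=0$ at once. It needs neither a maximum principle on unbounded domains nor viscosity comparison. It does rely on identifying $w^i$ and $v$ with value functions over a common class of controls. That is standard, and the paper itself uses it in Theorem \ref{existence thm stocahstic} and Corollary \ref{uniqueness cor}. The advantage of the paper's PDE mechanism is that it can be differentiated and reused for the gradient estimate of Theorem \ref{conv thm5}. A value-function comparison does not give that.

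\textbf{Two minor simplifications.} First, $\Lt^i(y,a,m^{-i}_s,\a^{-i}_s)=\E\bigl[L(y,a,m^{N,-i}_{X_s,A_s})\bigr]$ for each fixed $(y,a)$, by independence of the $X^j$. So you need neither Lemma \ref{Dudley lemma} nor any conditioning. Second, when the Lipschitz constant in the measure variable is uniform in $(x,a)$, the restriction to bounded controls is unnecessary.
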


\begin{proof}
    Let us first consider the case $\b>0$, recall the identity
    \[H(x,D_xv,\m_t)=D_xv\cdot D_pH(x,D_xv,\m_t)-L(x,-D_pH(x,D_xv,\m_t),\m_t).\]
    By definition of the Hamiltonian we also have for all $a\in\Rd$
    \[\Ht(x,D_xw^i,m^{-i}_t,\a^{-i}_t)\geq-D_xw^i\cdot a-\Lt(x,a,m^{-i}_t,\a^{-i}_t).\]
    Choosing $a=-D_pH(x,D_xv,\m_t)$ we can bound the difference of the Hamiltonians
    \begin{align*}
        &H(x,D_xv,\m_t)-\Ht(x,D_xw^i,m^{-i}_t,\a^{-i}_t)\\
        \leq& (D_xv-D_xw^i)\cdot D_pH-\E\biggl[L(x,-D_pH,\m_t)-L\left(x,-D_pH,m^{N,-i}_{X_t,A_t}\right)\biggr] \\
        \leq&(D_xv-D_xw^i)\cdot D_pH+\E\biggl[\norm{D_mL}_\infty W_2\left(\m_t,m^{N,-i}_{X_t,A_t}\right)\biggr]
    \end{align*}
    where $X,\Xh$ are the first components of the FBSDEs \eqref{PMP},\eqref{iidPMPMFG} respectively, $A,\Ah$ are the corresponding control processes. For $t\in[0,T)$ we define
    \begin{align*}      C(t):=&\E\biggl[\norm{D_mL}_\infty W_2\left(\m_t,m^{N,-i}_{X_t,A_t}\right)\biggr]\\
\leq&\E\biggl[\norm{D_mL}_\infty\left(W_2\left(\m_t,m^{N,-i}_{\Xh_t,\Ah_t}\right)+W_2\left(m^{N,-i}_{X_t,A_t},m^{N,-i}_{\Xh_t,\Ah_t}\right)\right)\biggr]
\\ \leq&\norm{D_mL}_\infty\left(\widehat{C}r_{d,q}(N)^\frac{1}{2}+\left(\frac{1}{N-1}\sum_{j\neq i}\E\left[|X^j_t-\Xh^j_t|^2+|A^j_t-\Ah^j_t|^2\right]\right)^\frac{1}{2}\right).\\
    \end{align*}
Next, we bound the difference between the final costs
\begin{align*}
\gt(x,m^{-i}_T)-g(x,m_T)
    &\geq- \norm{D_mg}_\i\left(\widehat{C}r_{d,q}(N)^\frac{1}{2}+\E\biggl[W_2\left(m^{N,-i}_{X_T},m^{N,-i}_{\Xh_T}\right)\biggr]
    \right)\\ &\geq-\norm{D_mg}_\i\left(\widehat{C}r_{d,q}(N)^\frac{1}{2}+\left(\frac{1}{N-1}\sum_{j\neq i}\E\left[|X^j_T-\Xh^j_T|^2\right]\right)^\frac{1}{2}\right).
\end{align*}
We set
\[C_T:=\norm{D_mg}_\i\left(\widehat{C}r_{d,q}(N)^\frac{1}{2}+\left(\frac{1}{N-1}\sum_{j\neq i}\E\left[|X^j_T-\Xh^j_T|^2\right]\right)^\frac{1}{2}\right).\]

Define $\Psi(t,x):=w^i(T-t,x)-v(T-t,x)+C_T+\int^{T-t}_0 C(s)\ ds$, considering the respective HJB equations $w^i,v$ solve and the bound on the difference of the Hamiltonians we derived above, we find that $u$ is a subsolution to the linear parabolic equation
\[\begin{cases}
    \p_t\Psi(t,x)-\b\D \Psi(t,x)+D_pH(x,D_xv(T-t,x),\m_{T-t})D_x\Psi(t,x)\leq 0, &\textnormal{in } (0,T)\times\Rd,\\
    \Psi(0,x)\geq0, &\textnormal{in } \Rd.
\end{cases}\]

%

Therefore, by the maximum principle \cite[Section 2.4, Theorem 9]{friedman2013partial} (note that condition (4.2) therein is satisfied as $w^i,v$ grow at most quadratically), we have $\Psi(t,x)\geq0$ for all $(t,x)\in[0,T]\times\Rd$, i.e.
\[w^i(t,x)-v(t,x)\geq -\widehat{C}\left(r_{d,q}(N)^\frac{1}{2}+\left(\frac{1}{N-1}\E\Biggl[\sup_{t\in[0,T]}|X_t-\Xh_t|^2+\int^T_0|A_t-\Ah_t|^2\ dt\Biggr]\right)^\frac{1}{2}\right).\]
By Theorem \ref{conv thm2} we have 
\begin{equation}\label{C hat}
    v(t,x)-w^i(t,x)\geq- \widehat{C}\left(K(N)+r_{d,q}(N)\right)^\frac{1}{2}
\end{equation}
where $\widehat{C}$ is independent of $N\in\N$. Reversing the roles of $v,w^i$ and $H,\Ht$ one can derive the corresponding bound from above.

\medskip

The deterministic case $\b=0$ can be proven instead using the comparison principle for Hamilton-Jacobi equations. Following some of the bounds derived above, the function
\[w^i(t,x)+\widehat{C}(K(N)+r_{d,q}(N))^\frac{1}{2})(1+t)\]
is a supersolution of the Hamilton-Jacobi equation in \eqref{MFG} (with $\b=0)$, where $\widehat{C}$ is the same constant as the one appearing in \eqref{C hat}, while
\[v(t,x)+\widehat{C}(K(N)+r_{d,q}(N))^\frac{1}{2})(1+t)\]
is a supersolution of the Hamilton-Jacobi equation in \eqref{DNash deterministic}. Appealing to the comparison principle we derive the result also in the case $\b=0.$
\end{proof}



   
\begin{remark} 
    Due to the inability to compare $D_pH$ and $D_p\Ht$, we are unable obtain any quantitative convergence result for the gradients $D_xw^i$ and $D_xv$ in the stochastic case when there is nontrivial interaction through controls. However, we expect that qualitative convergence from compactness methods still holds for gradients, akin to results stated in 
    \cite[Theorem 2.3]{MFGlionslasry}.
\end{remark}

\section{Vanishing Viscosity Limit and Gradient Convergence for Deterministic Problems}\label{sec:vanishing}

Throughout this section we impose Assumption \ref{assumption MF}. Note that in this section it is \textit{not} possible to relax Lipschitz assumptions on $L,g$ to a similar form as in Assumption \ref{Assumption L loc}, even if a theorem herein only deal with the deterministic case $\b=0$.

\medskip

We aim to prove in this section a version of Theorem \ref{conv thm 3} in the case of the fully deterministic games $\b=0$ and $m^i_0=\d_{z^i}$, but for the gradients $D_xw$ and $D_xv.$ However, we first need to take a detour and study the vanishing viscosity limit of the stochastic MFG system, which we recall takes the form, for $\b\geq0$ 

\begin{equation}
\left\{    
    \begin{array}{ll}
        -\p_tv_\b(t,x)-\b\D v_\b(t,x)+H(x,D_xv_\b(t,x),\m^\b_t)=0, & {\rm in}\ (0,T)\times\Rd,\\[3pt]
        \p_tm^\b_t-\b\D m^\b_t-\na\cdot(m^\b_tD_p H(x,D_xv_\b(t,x),\m^\b_t))=0, & {\rm in}\ (0,T)\times\Rd,\\[3pt]
        \m^\b_t=(\textnormal{Id},-D_pH(\cdot,D_xv_\b(t,\cdot),\m^\b_t))_\sharp m_t^\b, &\textnormal{in }[0,T],\\[3pt]
         v_\b(T,x)=g(x,m^\b_T),\ m^\b_0=m_0, & {\rm in}\ \Rd,\\
    \end{array}
\right.
\end{equation}

\medskip
We proved in Theorem \ref{existence thm MFG}, the PDE system above is wellposed for $\b>0$. Let us recall the fixed point mapping $\Phi$ briefly used in the proof of Theorem \ref{existence thm MFG}. It was proven in \cite[Lemma 3.3]{jackson2025quantitativeconvergencedisplacementmonotone} that there exists a unique mapping $\Phi:\prob_2(\Rd\times\Rd)\rightarrow\prob_2(\Rd\times\Rd)$ which is Lipschitz w.r.t. $W_2$ and has the property, for all $L^2$ random variables $X,Y$
\[\Phi(\Leb(X,Y))=\Leb(X,-D_pH(X,Y,\Phi(\Leb(X,Y))).\]
With the help of the mapping $\Phi$, we can rewrite the stochastic MFG system in the form
\begin{equation}
\left\{    
    \begin{array}{ll}\label{b-MFG}
    -\p_tv_\b(t,x)-\b\D v_\b(t,x)+H\left(x,D_xv_\b(t,x),\Phi\left((\textnormal{Id},D_xv_\b(t,\cdot))_\sharp m_t^\b\right)\right)=0, & {\rm in}\ (0,T)\times\Rd,\\[3pt]
        \p_tm^\b_t-\b\D m^\b_t-\na\cdot\left(m^\b_tD_p H\left(x,D_xv_\b(t,x),\Phi\left((\textnormal{Id},D_xv_\b(t,\cdot))_\sharp m_t^\b\right)\right)\right)=0, & {\rm in}\ (0,T)\times\Rd,\\[3pt]
        v_\b(T,x)=g(x,m^\b_T),\ m^\b_0=m_0, & {\rm in}\ \Rd,\\
    \end{array}
\right.
\end{equation}

\medskip

Similarly, the PMP FBSDE can be rewritten as
\begin{equation}\label{b-PMPMFG}
 \left\{   
    \begin{array}{ll}
      X^\b_t=\xi-\displaystyle\int^t_0 D_pH\left(X^\b_s,Y^\b_s,\Phi\left(\Leb(X^\b_s,Y^\b_s)\right)\right) \ ds+\sqrt{2\b} B_t, & t\in [0,T]\\[3pt]
        Y^\b_t=D_xg(X^\b_T,\Leb(X^\b_T))-\displaystyle\int^T_tD_xH\left(X^\b_s,Y^\b_s,\Phi\left(\Leb(X^\b_s,Y^\b_s)\right)\right)\ ds-\int^T_tZ^\b_s\ dB_s, & t\in [0,T]
    \end{array}
 \right.
\end{equation}

with $\Leb(X^\b_t)=m^\b_t,$ $Y^\b_t=D_xv_\b(t,X^\b_t)$ and $Z^\b_t=\sqrt{2\b}D^2_{xx}v_\b(t,X_t^\b)$.

\medskip

Let us emphasise again that we impose Assumption \ref{assumption MF} and \ref{Assumption MF semimon} \textit{throughout} this section.
\begin{theorem}\label{thm vanishing viscosity MFG}
    Let $\b=0$, $m_0\in\prob_q(\Rd)$ for some $q>2$, the PDE system \eqref{b-MFG} is wellposed. Furthermore, for all $R>0$
    \[\lim_{\b\rightarrow0}\Bigl(\norm{v_\b-v}_{L^\i([0,T]\times B_R)}+\norm{D_xv_\b-D_xv}_{L^\i([0,T]\times B_R)}\Bigr)=0,\]
    \[\lim_{\b\rightarrow0}\sup_{t\in[0,T]}\left(W_2^2(\nu^{\b}_t,\nu_t)+W^2_2(m^{\b}_t,m_t)\right)=0\]
    where $(v_\b,m^\b)$ is the unique solution to \eqref{b-MFG} with $\b\geq0$, we defined $\nu^\b_t:=(\textnormal{Id},D_xv_{\b}(t,\cdot))_\sharp m_t^{\b}$ and we denoted $(v_0,m^0)$ as $(v,m)$.
\end{theorem}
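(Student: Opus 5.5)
The plan is a compactness argument in $\b\in(0,1]$: derive estimates uniform in $\b$, pass to the limit to obtain existence for $\b=0$, prove uniqueness separately, and deduce convergence of the whole family from uniqueness. Throughout I work with the reformulation \eqref{b-MFG}, \eqref{b-PMPMFG} via the Lipschitz map $\Phi$ of \cite[Lemma 3.3]{jackson2025quantitativeconvergencedisplacementmonotone}. This keeps the consistency relation out of the fixed-point problem, so the system looks like a standard MFG with a nonlocal Hamiltonian in the joint law $\nu^\b_t$.

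\textbf{Uniform estimates.} For fixed $\b>0$, $v_\b$ is the value function of a control problem with frozen flow $\m^\b_t=\Phi(\nu^\b_t)$. The Lipschitz, semiconcavity and convexity assumptions on $L(\cdot,\cdot,\m)$ and $g(\cdot,m)$ (conditions 5, 6 and 8 of Assumption \ref{assumption MF}) give, exactly as in the proof of Theorem \ref{existence thm stocahstic}, bounds independent of $\b$:
\[\norm{D_xv_\b}_\i\le C_0,\qquad \norm{D^2_{xx}v_\b}_{\i,\op}\le C_1 .\]
Semiconvexity follows from convexity of $L$ and $g$ in $x$, and semiconcavity from the semiconcavity assumptions. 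Hence the feedback
\[-D_pH\bigl(\cdot,D_xv_\b,\Phi(\nu^\b_t)\bigr)\]
is bounded and Lipschitz in $x$ uniformly in $\b$. SDE moment bounds then give a uniform $q$-th moment of $m^\b_t$ and the H\"older estimate $W_2(m^\b_t,m^\b_s)\le C(|t-s|+\sqrt{\b|t-s|})$.

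For time regularity of $D_xv_\b$ uniform in $\b$, I use the Kru\v{z}kov argument from the proof of Theorem \ref{existence thm stocahstic}: it yields a $\frac13$-H\"older modulus for $D_xv_\b$ in time, with constants depending on $\b$ only through an upper bound. Its inputs are $\norm{D^2_{xx}v_\b}\le C_1$ and a bounded right-hand side in \eqref{heat eqn}. Arzel\`a--Ascoli then gives local uniform compactness of $(v_\b,D_xv_\b)$ and compactness of $(m^\b)$ in $C([0,T];\prob_2)$, using uniform integrability from $q>2$.

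\textbf{Passing to the limit.} Take a subsequence with $v_\b\to v$ and $D_xv_\b\to D_xv$ locally uniformly, and $m^\b\to m$ in $C([0,T];\prob_2)$. Local uniform convergence of $D_xv_\b$ together with uniform Lipschitz bounds and moment bounds gives $\nu^\b_t\to\nu_t$ uniformly in $t$. The Lipschitz property of $\Phi$ then gives $\Phi(\nu^\b_t)\to\Phi(\nu_t)$, which passes the nonlocal coefficients to the limit. Stability of viscosity solutions makes $v$ a solution of the first-order HJB, which is $C^1$ by the $C^{1,1}$ bound and the Lipschitz-in-time argument of Theorem \ref{existence thm intermediate}. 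Stability of the KFP equation with uniformly Lipschitz drifts converging locally uniformly handles $m$. This proves existence for $\b=0$.

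\textbf{Uniqueness at $\b=0$ and convergence of the full family.} Any solution of \eqref{b-MFG} with $\b=0$ yields, by characteristics, a solution of \eqref{b-PMPMFG} with $\b=0$. The displacement semimonotonicity argument of Theorem \ref{stability theorem} and Theorem \ref{conv thm 1} (It\^o's formula on $(Y-\Yb)\cdot(X-\Xb)$ plus Assumption \ref{Assumption MF semimon}) forces the two trajectories to coincide; it applies verbatim for $\b=0$, the consistency being encoded by $\Phi$. Then $m$ and $\nu$ coincide, and $v$ is determined by the frozen-flow HJB. Uniqueness of the limit point makes the whole family converge, giving both limits in the statement.

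\textbf{Main obstacle.} The hard step is the $\b$-uniform time equicontinuity of $D_xv_\b$ and the resulting uniform convergence of $\nu^\b_t$. The Kru\v{z}kov step needs $\b$ only bounded above, but I must check that the Lipschitz constant of $D_x H$ in the measure variable, composed with $\Phi$, does not blow up. If it fails, I would instead estimate $\E|Y^\b_t-Y^0_t|^2$ directly via the FBSDE stability argument of Theorem \ref{stability theorem}, comparing \eqref{b-PMPMFG} for $\b>0$ and $\b=0$ with the noise term treated as an $O(\b)$ perturbation. This gives $\sup_t W_2(\nu^\b_t,\nu_t)\to0$ directly.
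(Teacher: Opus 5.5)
Your proposal is correct. Its skeleton matches the paper: $\b$-uniform $C^{1,1}$ bounds, Kru\v{z}kov time regularity for $D_xv_\b$, Arzel\`a--Ascoli, uniqueness at $\b=0$ via the PMP and displacement semimonotonicity, and convergence of the whole family from uniqueness.

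The genuine difference is how you obtain the limit measure flow and prove $\nu^\b_t\to\nu_t$. The paper extracts a limit of $(v_\b,D_xv_\b)$ only. It then \emph{defines} $m$ as the unique solution of the nonlinear nonlocal continuity equation with drift $-D_pH(x,D_xv,\Phi((\textnormal{Id},D_xv(t,\cdot))_\sharp m_t))$, which needs the appendix result Theorem \ref{thm cont eqn}. Next it couples the characteristics synchronously and applies Gr\"onwall, using that $\Phi$ is $W_2$-Lipschitz. This gives $\sup_t\E|X^{\b_j}_t-X_t|^2\le C(\b_j+\int_0^T\norm{D_xv_{\b_j}(s,\cdot)-D_xv(s,\cdot)}^2_{L^\i(B_R)}ds)$. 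The gradient convergence is only local, so the paper first treats compactly supported $m_0$ and then extends via the approximation argument of \cite[Corollary 3.6]{vanishingviscosityMFG}.

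You instead take $m$ as a limit point of $(m^\b)$ in $C([0,T];\prob_2)$, using the uniform $q$-moment and $W_2$-equicontinuity bounds. You control the tail of $\int|D_xv_\b(t,\cdot)-D_xv(t,\cdot)|^2\,dm_t$ by $\norm{D_xv_\b}_\i\le C_0$ and those moments. Your route is softer and more self-contained. It needs neither the appendix well-posedness theorem (existence of $m$ comes from the limit, uniqueness from the PMP argument) nor the compact-support reduction, and it uses only continuity of $\Phi$. The paper's coupling route instead yields an explicit bound on the measure discrepancy in terms of $\b$ and the local gradient discrepancy, not just a qualitative subsequential limit.

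The obstacle you flag is not real. In the Kru\v{z}kov step the measure argument $\Phi(\nu^\b_t)$ is a frozen parameter. The right-hand side of \eqref{heat eqn} is bounded because $D_xH$ and $D_pH$ are bounded on $\{|p|\le C_0\}$ uniformly in the measure, by Theorem \ref{legendre} and condition 8 of Assumption \ref{assumption MF}. No Lipschitz constant in the measure variable enters, so your fallback is unnecessary.
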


\begin{proof}
    We take $\b\in(0,1)$. Let us first note that by classical estimates from control theory, we have the following bounds on the value function
    \[\norm{D_xv}_\i\leq C_0\ \ \textnormal{and}\ \ \norm{D^2_{xx}v}_{\i,\op}\leq C_1,\]
    the constants $C_0,C_1>0$ can be made to be independent of $\b$ (e.g. \cite[Appendix A]{cirant2025longtimebehaviorstabilization}). Moreover, following similar arguments as in the proof of Theorem \ref{existence thm stocahstic}, we also have 
    \[\norm{\p_tv}_{L^\i([0,T]\times B_R)}<C(R) \]
and equi-Hölder continuity in time for $D_xv$ (the Hölder constant can be made to be independent of $\b\in(0,1)$, e.g. see the precise statement of \cite[Lemma B.1]{CirantRedaelli}).

    \medskip

    By Arzelà--Ascoli theorem, we can extract a subsequence $\{\b_j\}$ such that  $v_{\b_j}$ converges to  some function $v\in C^{1,1}([0,T]\times\Rd)$ locally uniformly, with $D_xv_{\b_j}$ also converging locally uniformly to $D_xv.$ That is,
    \[\lim_{\b_j\rightarrow0}\Bigl(\norm{v_{\b_j}-v}_{L^\i([0,T]\times B_R)}+\norm{D_xv_{\b_j}-D_xv}_{L^\i([0,T]\times B_R)}\Bigr)=0.\]
    We aim to show that $v$ is a classical solution to \eqref{b-MFG} with $\b=0$. We note that the proof of uniqueness in Theorem \ref{existence thm MFG} also extend to the case $\b=0$, as the proof of uniqueness of the PMP system \eqref{PMPMFG} in \cite[Appendix A]{jackson2025quantitativeconvergencedisplacementmonotone} also holds in this case. 

    \medskip

Denote by $m\in C([0,T];\prob_2(\Rd))$ the unique distributional solution to the following continuity equation along with initial condition $m_0$,
\[\p_tm_t-\na\cdot\left(m_tD_p H\left(x,D_xv(t,x),\Phi\left((\textnormal{Id},D_xv(t,\cdot))_\sharp m_t\right)\right)\right)=0,\]
which exists by Theorem \ref{thm cont eqn}, note that the conditions to use the aforementioned theorem are verified. Indeed, the Lipschitz continuity in $x$ follows from the Lipschitz continuity of $D_pH$ and $D_xv$, the Lipschitz continuity in the measure variable w.r.t. $W_2$ follows from the Lipschitz continuity of $\Phi$ w.r.t. $W_2$ and the fact that for all Lipschitz functions $f$ we have
\[W_2(f_\sharp \m, f_\sharp \nu)\leq \norm{f}_\i W_2(\m,\nu).\]
Moreover, by Theorem \ref{legendre} the vector field $V(t,x,\m)=D_p H\left(x,D_xv(t,x),\Phi\left((\textnormal{Id},D_xv(t,\cdot))_\sharp \mu\right)\right)$ is in fact bounded.

\medskip

Let $X$ be the unique strong solution to the following SDE
\[X_t=\xi-\int^t_0 D_pH\left(X_s,D_xv(s,X_s),\Phi\Bigl((\textnormal{Id},D_xv(t,\cdot))_\sharp m_t\Bigr)\right) ds.\]
%

    %


   We need to show $\nu_t^{\b_j}:=(\textnormal{Id},D_xv_{\b_j}(t,\cdot))_\sharp m_t^{\b_j}\rightarrow \nu_t:=(\textnormal{Id},D_xv(t,\cdot))_\sharp m_t$ in $W_2$, $\beta_j\to 0$. First, let us assume that $m_0$ is compactly supported, equivalently that $\xi\in L^\i(\Om)$, observe that
\begin{align*}
W_2^2(\nu^{\b_j}_t,\nu_t)+W^2_2(m^{\b_j}_t,m_t)&\leq\E\Bigl[2|X^{\b_j}_t-X_t|^2+|D_xv_{\b_j}(t,X_t^{\b_j})-D_xv(t,X_t)|^2\Bigr]\\
&\leq \E\Bigl[2(1+C_1)|X^{\b_j}_t-X_t|^2+|D_xv_{\b_j}(t,X_t)-D_xv(t,X_t)|^2\Bigr]\\
&\leq2\left((1+C_1)\E\Bigl[|X^{\b_j}_t-X_t|^2\Bigr]+\norm{D_xv_{\b_j}(t,\cdot)-D_xv(t,\cdot)}_{L^\i(B_R)}^2\right)
   \end{align*}
   where $R>0$ is large enough so that \cite[Lemma 3.1]{vanishingviscosityMFG} applies. By a classical application of Grönwall's inequality to (McKean--Vlasov) SDEs (see also \cite[Lemma 3.2]{vanishingviscosityMFG}), using the fact that $\Phi$ is Lipschitz w.r.t. $W_2$, we can derive the bound
   \[\sup_{t\in[0,T]}\E\Bigl[|X^{\b_j}_t-X_t|^2\Bigr]\leq C\left(\b_j+\int^T_0 \norm{D_xv_{\b_j}(s,\cdot)-D_xv(s,\cdot)}_{L^\i(B_R)}^2\ ds\right)\rightarrow 0\]
   as $\b_j\rightarrow0$ (note that the constant $C>0$ is independent of $\b_j>0$). Consequently $\nu_t^{\b_j}\rightarrow \nu_t$ in $W_2$ for all $t\in[0,T]$ and passing to the limit in the Hamilton-Jacobi equation in \eqref{b-MFG}, we deduce that $(v,m)$ is a solution to \eqref{b-MFG} with $\b=0$. By uniqueness of the PDE system \eqref{b-MFG} with $\b=0$, we have that necessarily the whole sequence converges and the proof is complete in the case that $m_0$ is compactly supported. 

   \medskip

   In the general case $m_0\in\prob_q(\Rd)$, one can employ the approximation argument of \cite[Corollary 3.6]{vanishingviscosityMFG}.

   %
\end{proof}

For initial conditions $z\in(\Rd)^N$, we introduce the following \say{intermediate} system, for which the existence of classical solutions can be shown by the same method as in Theorem \ref{existence thm stocahstic}. Such a system is constructed to facilitate gradient estimates that pass to the vanishing viscosity limit which would otherwise be difficult to obtain without the use of parabolicity and hence maximum principle. It is essential that we only regularise the HJB equation and not the \say{state equations} to keep the \say{states} deterministic, so that we do not have to lift the Hamiltonian (as in general it is not possible to directly compare $D_p\Ht$ and $D_pH$).
\begin{equation*}
\left\{    
    \begin{array}{ll}
    -\p_tw^i_\b(t,x)-\b\D w_\b^i(t,x)+H(x,D_xw_\b^i(t,x),X^{-i,\b}_t,\a^{-i,\b}(t,X^{-i,\b}_t))=0, & {\rm in}\ (0,T)\times\Rd,\\[3pt]
        \dot{X}^{i,\b}_t=\a^{i,\b}(t,X^{i,\b}_t), & {\rm in}\ (0,T),\\[3pt]
        \a^{i,\b}(t,x)=-D_pH(x,D_xw_\b^i(t,x),X^{-i,\b}_t,\a^{-i,\b}(t,X^{-i,\b}_t)), & {\rm in}\ (0,T)\times\Rd,\\[3pt]
        w^i_\b(T,x)=g(x,X^{-i,\b}_T),\ X^{i,\b}_0=z^i, & {\rm in}\ \Rd,\\
    \end{array}
\right.
\end{equation*}

We can rewrite the system above with a close relative of the mapping $\Phi$, which we denote as $\Phi^N:(\Rd)^N\times(\Rd)^N\rightarrow(\Rd)^N$, with the property that
\[\Phi^N(x,p)=-D_pH\left(x^i,p^i,m^{N,-i}_{x,\Phi^N(x,p)}\right).\]
Such a mapping $\Phi^N$ was proven to exist, is unique and Lipschitz in \cite[Lemma 3.5]{jackson2025quantitativeconvergencedisplacementmonotone}\footnote{The mapping $\Phi^N$ was denoted $a^N$ therein, but we do not use this notation to avoid confusion with $\a$.} whenever $N\in\N$ is large enough, we will assume that this is the case from now on and we rewrite the PDE system into
\begin{equation}\label{DNash intermediate}
\left\{    
    \begin{array}{ll}
    -\p_tw^i_\b(t,x)-\b\D w_\b^i(t,x)+H\left(x,D_xw_\b^i(t,x),X^{-i,\b}_t,\Phi^{N,i}\left(X^{\b}_t,D_xw_\b(t,X^\b_t)\right)\right)=0, & {\rm in}\ (0,T)\times\Rd,\\[3pt]
\dot{X}^{i,\b}_t=\Phi^{N,i}\left(X^{\b}_t,D_xw_\b(t,X^\b_t)\right), & {\rm in}\ (0,T),\\[3pt]
        w^i_\b(T,x)=g(x,X^{-i,\b}_T),\ X^{i,\b}_0=z^i, & {\rm in}\ \Rd.\\
    \end{array}
\right.
\end{equation}

We emphasise again that the states $X^\b$ are deterministic for all $\b\geq 0.$

\begin{lemma}\label{N vanishing}
There exists a classical solution $(w_\b,X^\b)$ to \eqref{DNash intermediate} for all $\b\geq0$. Moreover, for all $R>0$
    \[\lim_{\b\rightarrow0}\max_{i=1,\ldots,N}\left(\norm{w^i_\b-w^i}_{L^\i([0,T]\times B_R)}+\norm{D_xw^i_\b-D_xw^i}_{L^\i([0,T]\times B_R)}\right)=0,\]
    \[\lim_{\b\rightarrow0}\sup_{t\in[0,T]}|X^\b_t-X_t|=0\]
    %
    where we denoted $(w_0,X^0)$ as $(w,X)$.
\end{lemma}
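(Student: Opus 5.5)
Existence for each fixed $\b>0$ will come from the Schauder--Tychonoff scheme of Theorem \ref{existence thm stocahstic}, applied to the (non-lifted) Hamiltonians $H$. Given $\bar X\in C([0,T];(\Rd)^N)$ in a closed convex set of uniformly bounded, uniformly Lipschitz curves, solve each HJB equation in \eqref{DNash intermediate} with the frozen data $\bar X^{-i}_t$ and $\Phi^{N,i}(\bar X_t,\cdot)$. To avoid the implicit dependence of $\Phi^{N,i}$ on $D_xw_\b(t,\bar X_t)$, I will iterate jointly on the pair $(\bar X,\bar P)$, where $\bar P^j_t$ stands in for $D_xw^j_\b(t,\bar X^j_t)$. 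The map sends this pair to $(X,P)$, with $\dot X^i=\Phi^{N,i}(X_t,P_t)$ and $P^i_t=D_xw^i_\b(t,X^i_t)$.

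The needed estimates are the control-theoretic bounds $\norm{D_xw^i_\b}_\i\le C_0$ and $\norm{D^2_{xx}w^i_\b}_{\i,\op}\le C_1$, which are uniform in $\b\in[0,1)$ (as in the proof of Theorem \ref{thm vanishing viscosity MFG}, using Assumption \ref{assumption MF}). Together with the boundedness and Lipschitz continuity of $\Phi^N$ (\cite[Lemma 3.5]{jackson2025quantitativeconvergencedisplacementmonotone}, for $N$ large), these give equi-Lipschitz bounds on $X$. Time regularity of $P$ follows from the Kružkov-type Hölder estimate for $D_xw_\b$ as in the proof of Theorem \ref{existence thm stocahstic}; note that this estimate is uniform in $\b$. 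Continuity of the map follows from stability of HJB equations, and a fixed point then exists. For $\b=0$ the solution $(w,X)$ is the one from Section \ref{section existence deterministic}; alternatively, it is the limit obtained below.

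For the vanishing viscosity limit, the uniform bounds on $w^i_\b$, $D_xw^i_\b$ (Lipschitz in $x$, equi-Hölder in $t$), $\p_tw^i_\b$ on $[0,T]\times B_R$, and on $X^\b$ (equi-Lipschitz) allow Arzelà--Ascoli to extract a subsequence with $w^i_{\b_j}\to\wt^i$ and $D_xw^i_{\b_j}\to D_x\wt^i$ locally uniformly, and $X^{\b_j}\to\Xt$ uniformly. Since $D_xw_{\b_j}(t,X^{\b_j}_t)\to D_x\wt(t,\Xt_t)$ uniformly, the continuity of $\Phi^N$ lets me pass to the limit in the ODE. Stability of viscosity solutions (the Hamiltonians converge locally uniformly) then shows that $(\wt,\Xt)$ solves \eqref{DNash intermediate} with $\b=0$, with $\wt^i\in C^{1,1}$ by the uniform semiconcavity/convexity bounds.

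The main obstacle is identifying the limit with the given $(w,X)$, i.e.\ uniqueness for the $\b=0$ system, which is needed so that the whole family converges. Setting $\a^i(t,x)=-D_pH(x,D_x\wt^i,\Xt^{-i}_t,\Phi^{N,-i}(\Xt_t,D_x\wt(t,\Xt_t)))$ shows that the limit is a solution of \eqref{DNash deterministic}, because by the defining property of $\Phi^N$ we have $\a^{j}(t,\Xt^j_t)=\Phi^{N,j}(\Xt_t,D_x\wt(t,\Xt_t))$. Corollary \ref{uniqueness cor}, applicable thanks to Lemma \ref{semimon lemma} for $N$ large, then gives $\Xt=X$ and $\wt=w$. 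By uniqueness of subsequential limits, the full convergence as $\b\to0$ follows, and taking the maximum over the finitely many $i$ is trivial.
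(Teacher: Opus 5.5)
Your proposal is correct and follows essentially the same route as the paper. Existence at fixed $\b$ comes from a Schauder-type scheme. The uniform-in-$\b$ control-theoretic $C^{1,1}$ and time-regularity bounds give Arzel\`a--Ascoli compactness of $w_\b$ and $D_xw_\b$, you pass to the limit, and you identify the limit through uniqueness for \eqref{DNash deterministic} (Corollary \ref{uniqueness cor}). The differences are only technical: you take the pair of curves $(\bar X,\bar P)$ as the fixed-point variable instead of feedback functions, and you get convergence of $X^{\b_j}$ from Arzel\`a--Ascoli, whereas the paper defines $X$ as the solution of the ODE driven by the limiting gradient and proves $X^{\b_j}\to X$ by a Gr\"onwall estimate using the Lipschitz property of $\Phi^N$.
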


\begin{proof}
The existence of solutions can be shown with the same method as in Theorem \ref{existence thm stocahstic}, we do not claim any uniqueness (we will not need uniqueness). Similarly to the proof of Theorem \ref{thm vanishing viscosity MFG}, by classical control estimates we can apply the Arzelà--Ascoli theorem, and we can extract a subsequence $\{\b_j\}_{j\in\N}$ such that  $w^i_{\b_j}$ converges to  some function $w^i\in C^{1,1}([0,T]\times\Rd)$ locally uniformly, with $D_xw^i_{\b_j}$ also converging locally uniformly to $D_xw^i.$ That is,
    \[\lim_{\b_j\rightarrow0}\max_{i=1,\ldots N}\Bigl(\norm{w^i_{\b_j}-w^i}_{L^\i([0,T]\times B_R)}+\norm{D_xw^i_{\b_j}-D_xw^i}_{L^\i([0,T]\times B_R)}\Bigr)=0.\]
    Denote by $X\in C([0,T];(\Rd)^N)$ the unique solution to the ODE system
\[ \left\{   
    \begin{array}{ll}
      \dot{X}^{i}_t=\Phi^{N,i}\left(X_t,D_xw(t,X_t)\right), & t\in (0,T)\\
        X^{i}_0=z^i.
    \end{array}
 \right.\]
Let $R>0$ be large enough so that $B_R$ contains the all the trajectories $X^i$, i.e. $\sup_{1\leq i\leq N}\norm{X^i}_\i\leq R$, then by Jensen's inequality and Lipschitz property of $\Phi^N$ from \cite[Lemma 3.5]{jackson2025quantitativeconvergencedisplacementmonotone},
\begin{align*}
    |X_t-X^{\b_j}_t|^2&\leq C\int^t_0 |X_s-X^{\b_j}_s|^2+\sum^N_{i=1}|D_xw^i(s,X^i_s)-D_xw^i_{\b_j}(s,X^{i,\b_j}_s)|^2\ ds\\
    &\leq C\left(\sup_{1\leq i\leq N}\norm{D_xw^i_{\b_j}-D_xw^i}^2_{L^\i([0,T]\times B_R)}+\int^t_0 |X_s-X^{\b_j}_s|^2 ds \right).
\end{align*}
Moreover, the constant $C>0$ is independent of $\b_j$. 

\medskip

By Grönwall's inequality we have $\sup_{t\in[0,T]}|X^{\b_j}_t-X_t|^2\rightarrow0$, as $j\to\infty$. Passing to the limit we find that $(w,X)$ is a solution to \eqref{DNash deterministic}, which admits unique solutions, therefore we have that necessarily the whole sequence converges.
\end{proof}
Finally, we arrive at the main theorem of this section.
\begin{theorem}\label{conv thm5}
     Suppose
     \begin{itemize}[label=\raisebox{0.25ex}{\tiny$\bullet$}]
     \item Assumption \ref{assumption MF} and \ref{Assumption MF semimon} hold true;
         \item $\b=0$;
         \item $(w,X,\a)$ is the solution to the $N$-player system \eqref{DNash deterministic} with initial conditions $z^i\in\Rd$;
         \item $(v,\m)$ is the solution to the MFG system \eqref{MFG} with initial condition $m_0\in\prob_q(\Rd),\ q>2,\ q\notin\{4,\frac{d}{d-2}\}$.
     \end{itemize} Then there exists a constant $C>0$ independent of $N\in\N$ such that
    \[\max_{i=1,\ldots,N}\norm{D_xw^i-D_xv}_\infty\leq C\left(K(N)+r_{d,q}(N)\right)^\frac{1}{2}\]
    where we recall the quantities $K(N),r_{d,q}(N)$ are defined as in the Theorem \ref{conv thm 1} and \ref{conv thm2}.
\end{theorem}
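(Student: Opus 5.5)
We work at positive viscosity $\b>0$, where the HJB equations are uniformly parabolic, and then let $\b\to0$. On the $N$-player side we use the intermediate system \eqref{DNash intermediate}; on the mean field side we use the viscous MFG \eqref{b-MFG}. Lemma \ref{N vanishing} and Theorem \ref{thm vanishing viscosity MFG} give local uniform convergence $D_xw^i_\b\to D_xw^i$ and $D_xv_\b\to D_xv$. It therefore suffices to prove
\[\max_i\norm{D_xw^i_\b-D_xv_\b}_\infty\leq C\left(K(N)+r_{d,q}(N)\right)^{\frac12}+o_\b(1),\]
with $C$ independent of both $\b$ and $N$. In the intermediate system the states $X^\b$ stay deterministic, so the Hamiltonian $H$ appearing in the $w^i_\b$ equation is the same non-lifted $H$ as in the MFG. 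Only its measure argument differs: $m^{N,-i}$ built from $(X^{-i,\b}_t,\Phi^{N,-i})$ in one case, and $\m^\b_t=\Phi(\nu^\b_t)$ in the other.

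\textbf{Step 1: equations for the gradients.} Differentiate both HJB equations in $x_k$. Writing $u^i=\p_{x_k}w^i_\b$ and $\bar u=\p_{x_k}v_\b$, we get
\[-\p_tu^i-\b\D u^i+D_pH(x,D_xw^i_\b,\cdot)\cdot D_xu^i+\p_{x_k}H(x,D_xw^i_\b,\cdot)=0,\]
and the analogous equation for $\bar u$ with the MFG measure. Subtract the two and set $\psi=u^i-\bar u$. The difference of the $D_pH\cdot D_x$ terms splits into a transport term $D_pH(\cdot,D_xw^i_\b)\cdot D_x\psi$ plus a remainder bounded by $C|D_xw^i_\b-D_xv_\b|$, using $\norm{D^2_{xx}v_\b}\le C_1$ uniformly in $\b$. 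The difference of the $D_xH$ terms is bounded by
\[C|D_xw^i_\b-D_xv_\b|+C\,W_2\Big(m^{N,-i}_{X^\b_t,A^\b_t},\m^\b_t\Big),\]
by the Lipschitz regularity of $D_xH$ and $D_pH$ in all variables, which follows from conditions 7--8 of Assumption \ref{assumption MF} and Lemma \ref{lemma regularity L}. We then take $\Psi=|D_xw^i_\b-D_xv_\b|^2$ (or work componentwise) and apply the maximum principle exactly as in Theorem \ref{conv thm 3}. This yields a linear differential inequality $\p_t\Psi+\dots\le C\Psi+C\,\mathcal{E}(t)$. Grönwall then gives
\[\sup|D_xw^i_\b-D_xv_\b|^2\le C\Big(\sup|D_xg(\cdot,m^{N,-i}_{X^\b_T})-D_xg(\cdot,m^\b_T)|^2+\sup_t\mathcal{E}(t)\Big),\]
where $\mathcal{E}(t)$ collects the squared Wasserstein errors.

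\textbf{Step 2: controlling $\mathcal{E}$.} Bound $\mathcal{E}(t)$ by $W_2^2(m^{N,-i}_{\Xh^\b_t,\Ah^\b_t},\m^\b_t)$ plus $\frac1{N-1}\sum_{j}\big(|X^{j,\b}_t-\Xh^{j,\b}_t|^2+|A^{j,\b}_t-\Ah^{j,\b}_t|^2\big)$. Here the $\Xh^\b$ are i.i.d. copies of the viscous MF trajectories \eqref{b-PMPMFG}. The first term is handled by Fournier--Guillin, giving $r_{d,q}(N)$ uniformly in $\b\in(0,1)$, since the $q$-moments are uniform in $\b$. For the second term we repeat the It\^o/semimonotonicity computation of Theorems \ref{conv thm 1} and \ref{conv thm2}, now for the intermediate system. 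The $N$-player states are deterministic but the MF copies are random, and no conditioning or lifting is needed because $X^\b$ is deterministic. This step uses Assumption \ref{Assumption MF semimon} via Lemma \ref{semimon lemma} for $N$ large. Because $Y^{i,\b}_t=D_xw^i_\b(t,X^{i,\b}_t)$ solves a backward ODE with no martingale term while $\Yh^{i,\b}$ has a $dB^i$ term, the cross-variation vanishes, and the estimate gives $C(K(N)+r_{d,q}(N))$. Here $K(N)=\frac1N\sum_i|z^i-\widehat{\xi}^i|^2$ in expectation, which is minimised to $W_2^2(m_0,\d_{z^i})$ by choosing $\widehat\xi^i$ suitably.

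\textbf{Step 3: passage to the limit.} Let $\b\to0$ using Lemma \ref{N vanishing} and Theorem \ref{thm vanishing viscosity MFG}. All constants are independent of $\b$ and $N$, and local uniform convergence holds on every $B_R$, so the $\sup$ bound passes to the limit.

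The main obstacle is Step 1. The gradient equation is a system coupled through $D^2_{pp}H$ and the $\Phi^{N}$ dependence, since the measure argument itself depends on $D_xw_\b$ along trajectories. The difference must be organised so that a scalar maximum principle applies with constants uniform in $\b$. If the componentwise approach fails, we would instead use the PMP characterisation: write $D_xw^i_\b(t,x)$ as the $Y$-component of the individual control problem started at $(t,x)$ with frozen environment. We would then compare two such FBSDEs with identical noise and different environments, using the convexity of $L$ in $(x,a)$, which gives uniqueness and a Lipschitz dependence on the environment. This yields $|D_xw^i_\b(t,x)-D_xv_\b(t,x)|\le C\sup_s\mathcal{E}(s)^{1/2}$ directly, uniformly in $(t,x)$.
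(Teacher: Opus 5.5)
Your Steps 1 and 3 follow the paper's argument. You compare the intermediate system \eqref{DNash intermediate} with the viscous MFG \eqref{b-MFG}, apply the maximum principle componentwise to $\partial_{x_k}w^i_\beta-\partial_{x_k}v_\beta$, close with Gr\"onwall, and remove $\beta$ via Lemma \ref{N vanishing} and Theorem \ref{thm vanishing viscosity MFG}. What you call the main obstacle is not one. The environment $m^{N,-i}_{X^\beta_t,A^\beta_t}$ in the $w^i_\beta$-equation depends on $t$ only, so the $x_k$-derivative never sees the $\Phi^N$-coupling. The remainder from splitting the transport term does carry a $W_2$ term as well, since $D_pH$ is evaluated at two different measures.

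\textbf{The gap is in Step 2.} There you bound the Wasserstein error at fixed $\beta>0$ by rerunning the It\^o/semimonotonicity argument between the intermediate system and i.i.d.\ copies of \eqref{b-PMPMFG}. This treats $Y^{i,\beta}_t=D_xw^i_\beta(t,X^{i,\beta}_t)$ as a Pontryagin adjoint, and it is not one. Only the HJB equation is regularised, not the state equation. Differentiating the viscous HJB in $x$ and evaluating along $\dot X^{i,\beta}_t=-D_pH(\cdots)$ gives
\[\dot Y^{i,\beta}_t=D_xH\bigl(X^{i,\beta}_t,Y^{i,\beta}_t,m^{N,-i}_{X^\beta_t,A^\beta_t}\bigr)-\beta\,\Delta D_xw^i_\beta(t,X^{i,\beta}_t).\]
Your duality computation therefore contains the extra term $\beta\int_0^T\E\bigl[\Delta D_xw^i_\beta(t,X^{i,\beta}_t)\cdot(X^{i,\beta}_t-\Xh^{i,\beta}_t)\bigr]\,dt$. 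This involves third derivatives of $w^i_\beta$, but only $C^{1,1}$ bounds are uniform in $\beta$, and nothing in your sketch controls $\beta D^3w^i_\beta$.

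The cross-variation also does not vanish. $X^{i,\beta}-\Xh^{i,\beta}$ has martingale part $-\sqrt{2\beta}B^i$, and $\Yh^{i,\beta}$ has martingale part $\int\sqrt{2\beta}D^2_{xx}v_\beta\,dB^i$, which produces $2\beta\,\Delta v_\beta(t,\Xh^{i,\beta}_t)\,dt$. This term, like the $\sqrt{2\beta}B^i$ in the forward difference, is a harmless $O(\beta)$.

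So the claimed bound $C(K(N)+r_{d,q}(N))$ at positive $\beta$ does not follow. You would need an additional estimate such as $\beta\int_0^T\norm{D^3w^i_\beta(t,\cdot)}_\infty\,dt\to0$ for each fixed $N$. This is plausible by heat-kernel smoothing (roughly $O(\sqrt\beta)$), but you have not proved it.

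\textbf{How the paper avoids this.} It never compares the intermediate system with viscous copies.
\begin{enumerate}
\item It keeps the Gr\"onwall bound in the form $\widehat C\bigl(W_2(m^\beta_T,m^{N,-i}_{X^\beta_T})+\int_0^TW_2(\mu^\beta_t,m^{N,-i}_{X^\beta_t,A^\beta_t})\,dt\bigr)$.
\item It inserts the $\beta=0$ i.i.d.\ copies \eqref{iidPMPMFG} and lets $\beta\to0$ in the deterministic objects $\mu^\beta_t$, $X^\beta$, $A^\beta$.
\item Lemma \ref{N vanishing} gives $X^\beta\to X$, and by Lipschitz continuity of $\Phi^N$ also $A^\beta\to A$.
\item Theorem \ref{thm vanishing viscosity MFG}, with the Lipschitz continuity of $\Phi$, gives $\mu^\beta_t=\Phi(\nu^\beta_t)\to\mu_t$.
\end{enumerate}
In the limit one compares the genuinely Pontryagin deterministic $N$-player system \eqref{deterministic PMP} with the $\beta=0$ mean field copies. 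That is exactly Theorem \ref{conv thm2} with $\beta=0$, and together with Fournier--Guillin it yields $\limsup_{\beta\to0}\norm{D_xw^i_\beta-D_xv_\beta}_\infty\le\widehat C(K(N)+r_{d,q}(N))^{1/2}$. Reordering your Step 2 this way closes the gap without any new FBSDE estimate for the intermediate system.
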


\begin{proof}
    For all $R,\b>0,1\leq i \leq N$, we have 
    \begin{equation*}
        \norm{D_xw^i-D_xv}_{L^\i([0,T]\times B_R)}\leq \norm{D_xw^i_\b-D_xw^i}_{L^\i([0,T]\times B_R)}+\norm{D_xw^i_\b-D_xv_\b}_{\i}+\norm{D_xv_\b-D_xv}_{L^\i([0,T]\times B_R)}.
    \end{equation*}
    where $w_\b$ and $v_\b$ solves \eqref{DNash intermediate} and \eqref{b-MFG} with $\b>0$ respectively.

    \medskip
    Let us now derive a bound on the middle term using the maximum principle. For some $j\in\{1,\ldots, N\}$ define $\varphi:=\p_{x_j}w^i_\b$ and $ \psi:=\p_{x_j}v_\b$. Then $\varphi$ and $\psi$ are classical solutions (regularity can be deduced by the bootstrapping method also used in the proof of Theorem \ref{existence thm stocahstic}) of the parabolic PDEs respectively
    \[\begin{cases}
        -\p_t\varphi-\b\D\varphi+\p_{x_j}H_{w}+D_pH_w D_x\varphi=0, &\textnormal{in } (0,T)\times\Rd,\\
        \varphi(T,x)=\p_{x_j}g\left(x,m^{N,-i}_{X^\b_T}\right), &\textnormal{in } \Rd.
    \end{cases}\]
\[\begin{cases}
    -\p_t\psi-\b\D\psi+\p_{x_j}H_{v}+D_pH_v D_x\psi=0, &\textnormal{in } (0,T)\times\Rd,\\
        \psi(T,x)=\p_{x_j}g\left(x,m_T^\b\right), &\textnormal{in } \Rd.
\end{cases}\]
where $H_{w}:=H\left(x,D_xw_\b^i,X^{-i,\b}_t,\a^{-i,\b}(t,X^{-i,\b}_t)\right)$ and $H_v:=H(x,D_xv_\b,\m^\b_t).$ 

\medskip

We have
\begin{equation*}
 \p_{x_j}H_{w}-\p_{x_j}H_{v}+D_x\varphi(D_pH_w-D_pH_v)\leq C\left(W_2\left(\m^\b_t,m^{N,-i}_{X^\b_t,A^\b_t}\right)+\norm{D_xw^i_\b(t,\cdot)-D_xv_\b(t,\cdot)}_\i\right)=:C_t
\end{equation*}
where we denoted $A^\b_t:=(\a^{i,\b}(t,X^{i,\b}_t))_{i=1,\ldots,N}$ and the constant $C>0$ depends only on the Lipschitz constants of $D_xH, D_pH$ and $D_xw_\b.$
Furthermore, as $\norm{D^2_{xx}w_\b}_{\op,\i}\leq C_1$ can be made independent of $\b$ and $N$, so is the constant $C>0$ above. Note that as $D_xw^i_\b,D_xv_\b\in L^\i$, the quantity $C_t$ is well defined.

\medskip

Similarly, there is constant $C>0$ independent of $\b,N$ such that
\[\p_{x_j}g\left(x,m^{N,-i}_{X^\b_T}\right)-\p_{x_j}g\left(x,m_T^\b\right)\geq -CW_2\left(m^\b_T,m^{N,-i}_{X^\b_T}\right):=-C_T.\]
Define $\Psi(t,x):=\varphi(T-t,x)-\psi(T-t,x)+C_T+\int^{T-t}_0 C_s\ ds$, considering the difference between the equations satisfied by $\varphi$ and $\psi$, the identity $D_x\varphi D_pH_w- D_x\psi D_pH_v=D_pH_v(D_x\varphi-D_x\psi)+D_x\varphi(D_pH_w-D_pH_v)$ and the estimates derived above, we find that $\Psi$ is a subsolution of the linear parabolic PDE
\[\begin{cases}
    \p_t\Psi-\b\D\Psi+D_pH_vD_x\Psi\leq0, &\textnormal{in } (0,T)\times\Rd,\\
    \Psi(0,x)\geq0 &\textnormal{in } \Rd.
\end{cases}\]
By the maximum principle \cite[Section 2.4, Theorem 9]{friedman2013partial}, $\Psi(t,x)\geq0$ for all $(t,x)\in[0,T]\times\Rd$, i.e.
\begin{equation*}
    \varphi(t,x)-\psi(t,x)\geq-\left(C_T+\int^{t}_0 C_s\ ds\right)\geq-\left(\widetilde{C}_T+C\int^t_0\norm{D_xw^i_\b(s,\cdot)-D_xv_\b(s,\cdot)}_\i\ ds\right)
\end{equation*}
where $\widetilde{C}_T:=C_T+C\int^T_0W_2\left(\m^\b_t,m^{N,-i}_{X^\b_t,A^\b_t}\right)\ dt$. By a symmetric argument, we can obtain the bound for all $j=1,\ldots,N$
\[|\p_{x_j}w^i_\b(t,x)-\p_{x_j}v_\b(t,x)|\leq\left(\widetilde{C}_T+C\int^t_0\norm{D_xw^i_\b(s,\cdot)-D_xv_\b(s,\cdot)}_\i\ ds\right),\]
squaring, using Jensen's inequality and then summing over $1,\ldots,N$ we further obtain
\[\norm{D_xw^i_\b(t,\cdot)-D_xv_\b(t,\cdot)}_\i^2\leq\left(2d^2\widetilde{C}_T^2+C\int^t_0\norm{D_xw^i_\b(s,\cdot)-D_xv_\b(s,\cdot)}^2_\i\ ds\right).\]

\medskip

By Grönwall's inequality, there is a constant $\widehat{C}>0$ independent of $\b,N$ such that
\begin{align*}
    \norm{D_xw^i_\b(t,\cdot)&-D_xv_\b(t,\cdot)}_\i\leq\widehat{C}\left(W_2\left(m^\b_T,m^{N,-i}_{X^\b_T}\right)+\int^T_0W_2\left(\m^\b_t,m^{N,-i}_{X^\b_t,A^\b_t}\right)\ dt\right)\\
    &\leq\widehat{C}\left(\sup_{t\in[0,T]}W_2\left(\m^\b_t,m^{N,\-i}_{\Xh_t,\Ah_t}\right)+\left(\frac{1}{N-1}\E\Biggl[\sup_{t\in[0,T]}|X^\b_t-\Xh_t|^2+\int^T_0|A^\b_t-\Ah_t|^2\ dt\Biggr]\right)^\frac{1}{2}\right),
\end{align*}
where $\Xh,\Ah$ are corresponding state and control processes of the FBSDE \eqref{iidPMPMFG} with $\b=0$. We note that $X^\b,A^\b$ are deterministic processes.

\medskip

Combining Theorem \ref{thm vanishing viscosity MFG}, Lemma \ref{N vanishing}, Theorem \ref{conv thm2} and \cite[Theorem 1]{Fournier2013OnTR}, we find
\begin{align*}
    \limsup_{\b\rightarrow0}\  \norm{D_xw^i_\b(t,\cdot)-D_xv_\b(t,\cdot)}_\i&\leq \widehat{C}\left(K(N)+r_{d,q}(N)\right)^\frac{1}{2},
\end{align*}
as the constant $\widehat{C}>0$ is independent of $R,\b>0$ and $N\in\N$, we can conclude the proof.    
\end{proof}
An immediate corollary is a refinement of the quantitative convergence of trajectories between the players of the fully deterministic $N$-player game and representative players of the deterministic MFG starting in the same initial conditions.

\medskip

To this end, if for some $1\leq i\leq N$, the point $z^i$ belongs to the support of the initial population $m_0$, we introduce the state process $\Xh^{i,z^i}$ of a representative player of the deterministic MFG starting at initial condition $z^i$ to be the first component of the solution to the FBODE
\begin{equation}\label{representative PMPMFG}
    \begin{cases}
        \Xh^{i,z^i}_t=z^i-\int^t_0 D_pH(\Xh^{i,z^i}_s,\Yh^{i,z^i}_s,\m_s) \ ds \\
        \Yh^{i,z^i}_t=D_xg(\Xh^{i,z^i}_T,m_T)-\int^T_tD_xH(\Xh^{i,z^i}_s,\Yh^{i,z^i}_s,\m_s)\ ds.     
    \end{cases}
\end{equation}
where $\Yh^{i,z^i}_t=D_xv(t,\Xh^{i,z^i}_t)$ and $(v,\m)$ is the solution to the deterministic MFG \eqref{MFG} with $\b=0$. 

\begin{corollary}\label{conv 4}
     Suppose that we are in the setting of the previous theorem. Let $1\leq i\leq N$, $z^i\in\spt(m_0),$ then there exists a constant $C>0$ independent of $N\in\N$ such that
    \[\sup_{t\in[0,T]}\left(|X^i_t-\Xh^{i,z^i}_t|^2+|A^i_t-\Ah^{i,z^i}_t|^2 \right)\leq C(K(N) +r_{d,q}(N))\]
    where 
    \begin{itemize}[label=\raisebox{0.25ex}{\tiny$\bullet$}]
        \item $X^i$ is the $i$-th coordinate of the first component of the solution to the FBODE \eqref{deterministic PMP} with initial conditions $(z^1,\dots,z^N)$;
        \item $\Xh^{i,z^i}$ is the $i$-th coordinate of the first component of the solution to the FBODE \eqref{representative PMPMFG};
        \item $A^i_t=\a^i(t,X^i_t)$, $A^{i,z^i}_t:=-D_pH(\Xh^{i,z^i}_t,\Yh^{i,z^i}_t,\m_t)$ are the corresponding control processes
    \end{itemize} and the quantities $K(N)$ and $r_{d,q}(N)$ are defined as in the Theorem \ref{conv thm 1} and \ref{conv thm2}.
    %
    %
\end{corollary}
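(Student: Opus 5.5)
We derive the estimate from the gradient bound of Theorem \ref{conv thm5} together with a Grönwall argument along the two deterministic trajectories. Both $X^i$ and $\Xh^{i,z^i}$ start at $z^i$. In the deterministic setting the Pontryagin adjoint variables are given by the gradients of the value functions along the paths: $Y^i_t=D_xw^i(t,X^i_t)$ by the PMP associated with \eqref{DNash deterministic}, and $\Yh^{i,z^i}_t=D_xv(t,\Xh^{i,z^i}_t)$ by construction in \eqref{representative PMPMFG}. The hypothesis $z^i\in\spt(m_0)$ ensures that $v$, $\m$ are the objects appearing in the MFG and that the MFG feedback is meaningful at $z^i$.

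The key step is to write both controls as feedbacks through the same Hamiltonian. Since $X$ is deterministic, the $N$-player Hamiltonian is the unlifted one: $A^i_t=-D_pH\bigl(X^i_t,D_xw^i(t,X^i_t),m^{N,-i}_{X_t,A_t}\bigr)$. On the other side, $\Ah^{i,z^i}_t=-D_pH\bigl(\Xh^{i,z^i}_t,D_xv(t,\Xh^{i,z^i}_t),\m_t\bigr)$. Subtract the two and use the uniform Lipschitz continuity of $D_pH$ in $(x,p)$ and, w.r.t.\ $W_2$, in the measure variable, which follows from Lemma \ref{lemma regularity L}-type arguments under Assumption \ref{assumption MF}. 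Then use $\norm{D^2_{xx}v}_{\infty,\op}\le C_1$. This gives
\[
|A^i_t-\Ah^{i,z^i}_t|\le C\Bigl(|X^i_t-\Xh^{i,z^i}_t|+\norm{D_xw^i(t,\cdot)-D_xv(t,\cdot)}_\infty+W_2\bigl(m^{N,-i}_{X_t,A_t},\m_t\bigr)\Bigr).
\]
Theorem \ref{conv thm5} bounds the middle term by $C(K(N)+r_{d,q}(N))^{1/2}$.

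The remaining term, $W_2(m^{N,-i}_{X_t,A_t},\m_t)$, is the main obstacle, because the convergence available for it is only an averaged one. We handle it as in the proof of Theorem \ref{conv thm5}. First split via the triangle inequality through $m^{N,-i}_{\Xh_t,\Ah_t}$, where $\Xh,\Ah$ come from \eqref{iidPMPMFG} with $\b=0$ and i.i.d.\ initial data. This produces two pieces:
\begin{itemize}
\item The first piece is controlled by $\bigl(\tfrac{1}{N-1}\sum_{j}(|X^j_t-\Xh^j_t|^2+|A^j_t-\Ah^j_t|^2)\bigr)^{1/2}$. The left side is deterministic, so we may take expectations on the right and apply Theorem \ref{conv thm2}, giving the rate $(K(N)+r_{d,q}(N))^{1/2}$.
\item The second piece is the empirical-measure error. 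Its expectation, uniformly in $t$, is bounded by $Cr_{d,q}(N)^{1/2}$ via \cite[Theorem 1]{Fournier2013OnTR}, exactly as in the bound for $C_N$ in the proof of Theorem \ref{conv thm 1}.
\end{itemize}
(Alternatively, one can reuse the quantity $\widetilde C_T$-type bound already obtained inside the proof of Theorem \ref{conv thm5} after letting $\b\to0$ with Lemma \ref{N vanishing}.)

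Finally, the velocities satisfy $\frac{d}{dt}(X^i_t-\Xh^{i,z^i}_t)=A^i_t-\Ah^{i,z^i}_t$ with zero initial difference. Combining this with the displayed estimate gives
\[
|X^i_t-\Xh^{i,z^i}_t|\le C\int_0^t|X^i_s-\Xh^{i,z^i}_s|\,ds+C(K(N)+r_{d,q}(N))^{1/2}.
\]
Grönwall's inequality then bounds $\sup_t|X^i_t-\Xh^{i,z^i}_t|$ by $C(K(N)+r_{d,q}(N))^{1/2}$. Substituting back into the control estimate gives the same bound for $\sup_t|A^i_t-\Ah^{i,z^i}_t|$, and squaring yields the claim with $C$ independent of $N$.
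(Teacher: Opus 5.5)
Your route is the paper's: subtract the two feedback representations of the controls, use the Lipschitz continuity of $D_pH$ and $D_xv$ together with Theorem \ref{conv thm5}, and close with Grönwall. Your explicit treatment of the measure argument $W_2(m^{N,-i}_{X_t,A_t},\m_t)$, which the paper's two-line proof never mentions, is a useful addition.

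One claim is wrong as stated, though. Theorem \ref{conv thm2} bounds $\frac1N\sum_j\E\bigl[\sup_t|X^j_t-\Xh^j_t|^2+\int_0^T|A^j_t-\Ah^j_t|^2\,dt\bigr]$. It therefore controls the control discrepancies only in $L^2$ in time, and the first piece of your splitting is \emph{not} bounded by $C(K(N)+r_{d,q}(N))^{1/2}$ uniformly in $t$. For the state estimate this does no harm. There the measure term only enters through $\int_0^tW_2(m^{N,-i}_{X_s,A_s},\m_s)\,ds$, and Cauchy--Schwarz in $s$ reduces it to exactly the integrated quantity that Theorem \ref{conv thm2} controls. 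So $\sup_t|X^i_t-\Xh^{i,z^i}_t|^2\le C(K(N)+r_{d,q}(N))$ does follow.

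The gap is in your last step. Substituting back to bound $\sup_t|A^i_t-\Ah^{i,z^i}_t|$ requires $W_2(m^{N,-i}_{X_t,A_t},\m_t)\le C(K(N)+r_{d,q}(N))^{1/2}$ at each fixed $t$. That in turn needs a pointwise-in-time bound on the averaged control discrepancy $\frac1N\sum_j\E|A^j_t-\Ah^j_t|^2$, and none of the results you cite gives one. The paper's proof only displays the state inequality, so it leaves this implicit too.

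The missing ingredient is the contractivity condition 9 of Assumption \ref{assumption MF}. Set $\theta:=\l_{\min}^{-1}\norm{D^a_mD_aL}_{\infty,\op}<1$. Then $D_pH$ is $\theta$-Lipschitz in the control atoms of an empirical measure, measured by $\frac{1}{N-1}\sum_k|a^k-\bar a^k|$. Write your feedback estimate for every player $j$ against $\Ah^j_t$ from \eqref{iidPMPMFG}, and split the measure argument into control atoms, state atoms and empirical error. This gives
\[
|A^j_t-\Ah^j_t|\le C\Bigl(|X^j_t-\Xh^j_t|+\max_k\norm{D_xw^k-D_xv}_\infty+\Bigl(\tfrac{1}{N-1}\sum_{k\neq j}|X^k_t-\Xh^k_t|^2\Bigr)^{1/2}+W_2\bigl(m^{N,-j}_{\Xh_t,\Ah_t},\m_t\bigr)\Bigr)+\theta\,\tfrac{1}{N-1}\sum_{k\neq j}|A^k_t-\Ah^k_t|.
\]
Square this with Young's inequality and average over $j$, noting $\frac1N\sum_j\frac1{N-1}\sum_{k\neq j}c_k=\frac1N\sum_kc_k$. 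Then take expectations and absorb the $\theta^2$-term. Using the $\sup_t$ state bound of Theorem \ref{conv thm2}, Theorem \ref{conv thm5} and Fournier--Guillin, this yields $\sup_t\frac1N\sum_j\E|A^j_t-\Ah^j_t|^2\le C(K(N)+r_{d,q}(N))$. With this in hand, your control estimate for player $i$ closes pointwise in $t$ and gives the $\sup_t|A^i_t-\Ah^{i,z^i}_t|^2$ part of the claim.
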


\begin{proof}
Using Lipschitz properties of $D_pH$ and $D_xv$, we can obtain the bound below using Theorem \ref{conv thm5}
    \begin{align*}
        |X^i_t-\Xh^{i,z^i}_t|^2&\leq T\int^t_0 |A^i_s-\Ah^{i,z^i}_s|^2\ ds\\
        &\leq C\left( K(N)+ r_{d,q}(N)+\int^t_0 |X^i_s-\Xh^{i,z^i}_s|^2\ ds\right). 
    \end{align*}
    We can conclude the proof by Grönwall's inequality.
\end{proof}

\section{A Linear Quadratic Problem}\label{LQ section}

In the spirit of \cite{BardiLQMFG}, we derive explicitly computable linear quadratic (LQ) examples for $N$-player games and corresponding MFGs. To help with the ease of notations, we will reuse some notations for variables defined in previous sections.

\medskip
 
\begin{example}\label{ex:Npl}
Consider the $N$-player LQ problem in dimension one with $\b=0$, Dirac mass initial conditions (i.e. we are in the fully deterministic setting of \eqref{DNash deterministic}) and running costs
 \begin{equation}\label{LQ costs functions}L^i(a,A^{-i})=\frac{1}{2}\left(a+\frac{\k}{N-1}\sum_{j\neq i}A^j\right)^2+\frac{\g a^2}{2},\ g^i(x,y^{-i})=\frac{1}{2}\left(x+\frac{\varrho}{N-1}\sum_{j\neq i}y^j\right)^2,
 \end{equation}
where $\k,\varrho\neq0$ and  $\g>0$. Note that players are \say{penalised} according to how much they deviate from a factor of the mean velocity and final position (or the negative means when $\k,\varrho>0)$.

\medskip

Direct computations show that the functions defined in \eqref{LQ costs functions} satisfy Assumptions \ref{Assumption semimon} and \ref{assumption DxL}. In particular \eqref{C disp} holds with $C_{L,x}=0$, where
\[C_{L,a}:=\min\Biggl\{1+\g+\left(1-\frac{1}{N-1}\right)\k,1+\g-\frac{\k}{N-1}\Biggr\},\] 
\[C_g:=-\min\Biggl\{1+\g+\left(1-\frac{1}{N-1}\right)\varrho,1+\g-\frac{\varrho}{N-1}\Biggr\}.\]
Any solution found is in fact unique by Corollary \ref{uniqueness cor} if \eqref{C disp} holds.

\medskip

Furthermore, \eqref{LQ costs functions} satisfies Assumption \ref{Assumption L loc} whenever $|\k|<1+\g$, in which case we know that solutions exist by Theorem \ref{existence thm intermediate}.
\medskip

We find that the Hamiltonians are of the form
%
%
%
\[\Ht^i(p,A^{-i})=\frac{p^2}{2(\g+1)}+\frac{\k p}{\g+1}\frac{1}{N-1}\sum_{j\neq i} A^j-\frac{\g \k^2}{2(\g+1)}\left(\frac{1}{N-1}\sum_{j\neq i}A^j\right)^2.\]
In light of the LQ nature of our problem at hand, we make the following ansatz. 
\begin{ansatz}\
    \begin{enumerate}
        \item Affine feedback functions: $\a^i(t,x)=K_i(t)x+C_i(t)$
        \item Quadratic value functions: $w^i(t,x)=\frac{1}{2}r_i(t)x^2+p_i(t)x+q_i(t)$
    \end{enumerate}
    where $K_i,C_i,r_i,p_i,q_i$ are functions of time to be determined, from now on we will suppress the dependence on $t$ for ease of notation.
\end{ansatz}
    We first consider the consistency relation
    \begin{align*}
        K_ix+C_i=\a^i(t,x)=&-D_pH^i(D_xw^i(t,x),\a^{-i}(t,X^{-i}_t))\\
        =&-\frac{1}{\g+1}\left(D_xw^i(t,x)+\frac{\k}{N-1}\sum_{j\neq i}\a^j(t,X^j_t)\ \right)\\
        =&-\frac{r_i}{\g+1}x-\frac{p_i}{\g+1}-\frac{\k}{\g+1}\frac{1}{N-1}\sum_{j\neq i}(C_j+K_j X^j).
    \end{align*}
Therefore $K_i=-\frac{r_i}{\g+1}$ and $(\g+1)C_i=-p_i-\frac{\k}{N-1}\sum_{j\neq i}(C_j+K_j X^j)$. Summing the latter identity over $i=1,\ldots,N$ gives, for $\k\neq-(1+\g)$
\[\sum_{i=1}^N C_i=\frac{1}{\k+\g+1}\sum^N_{i=1}\left(-p_i+\frac{\k r_i X^i}{\g+1} \right)\]
and consequently, for $\k\neq (\g+1)(N-1)$
\[C_i=\frac{N-1}{(\g+1)(N-1)-\k}\left(-p_i-\frac{\k}{(\k+\g+1)(N-1)}\sum_{j=1}^N\left(-p_j+\frac{\k r_j X^j}{\g+1}\right)+\frac{\k}{(\g+1)(N-1)}\sum_{j\neq i}r_j X^j\right).\]
%
%
%
Considering the HJB equations gives the ODE system
\[\begin{cases}
    r_i'=\frac{r_i^2}{\g+1}\\
    p_i'+r_iC_i=0\\
     q_i'+\frac{p_i^2}{2}+(\g+1)p_iC_i+\frac{\g(\g+1)C_i^2}{2}=0.
\end{cases}\]
The terminal conditions for the ODEs above follows from 
\[g^i(x,X^{-i}_T)=\frac{1}{2}\left(x+\frac{\varrho}{N-1}\sum_{j\neq i}X^j_T\right)^2,\]
therefore $r_i(T)=1,\ p_i(T)=\frac{\varrho}{N-1}\sum_{j\neq i} X^i_T$ and $q_i(T)=\frac{1}{2}(\frac{\varrho}{N-1}\sum_{j\neq i}X^i_T)^2.$

\medskip

We readily solve the ODE for $r_i$,
\[r_i(t)=\frac{\g+1}{T+\g+1-t}.\]

The ODE system for $p_i$ is coupled with the state equations, which forms a fully coupled forward backward system
\begin{equation}\label{ODE}
    \begin{cases}
        p_i'+\frac{C_i}{T+1-t}=0 &p_i(T)=\frac{\varrho}{N-1}\sum_{j\neq i}\m_i(T)\\
        X'_i=-\frac{r_i }{\g+1}X_i+C_i &X_i(0)\in\R.
    \end{cases}
\end{equation}
In principle, this can be solved (even analytically) by summing \eqref{ODE} over $i=1,\ldots,N$ and solving first for the quantities $P(t):=\sum^N_{i=1}p_i(t),\ M(t):=\sum^N_{i=1} X_i(t).$ 
\end{example}

\begin{example}\label{ex:mfg}
Let us now turn to the corresponding MFG, the cost functions we have considered for the $N$-player games are the finite dimensional projections (i.e. \eqref{MF cost functions}) of 
\[L(a,\nu)=\frac{1}{2}\left(a+\k\int_\R A\ \nu(A)\right)^2+\frac{\g a^2}{2},\ g(x,m)=\frac{1}{2}\left(x+\varrho\int_\R y\ dm(y)\right)^2, \]
as before, we require $\g>0,\k\neq 0,-(1+\g).$

\medskip

%
%
%
%
  %
The corresponding Hamiltonian is 
\[H(p,\nu)=\frac{p^2}{2(\g+1)}+\frac{\k p}{\g+1} \int_\R A\ d\nu(A)-\frac{\g\k^2}{2(\g+1)}\left(\int_\R A\ d\nu(A)\right)^2.\]
The cost functions are displacement semimonotone in the sense of Assumption \ref{Assumption MF semimon}, provided
\begin{equation}\label{kappa rho condition}1+\k+\g+T(1+\varrho)>0.\end{equation}
Note that in this case the running cost (hence Hamiltonian) is independent of $x$, the consistency relation reduces to 
$$\nu_t=(-D_pH(D_xv(t,\cdot),\nu_t))_\sharp m_t.$$ 
\medskip

For the MFG we consider Gaussian initial conditions\footnote{Abusing notation to not distinguish between an absolutely continuous measure and its density.}, i.e. 
\[m(0,x)=\frac{s(0)}{\sqrt{\pi}}\exp\left(-\bigl(s(0)(x-\m(0))\bigr)^2\right)\]
for some $s(0)>0$ and $\m(0)\in\R$. Note that we will abuse notation and use the notation $m$ for both the measure and its density.

\medskip

We make the following ansatz (c.f. \cite{BardiLQMFG}). 

\begin{ansatz}\
    \begin{enumerate}
        \item Affine feedback functions: $\a(t,x)=K(t)x+C(t)$
        \item Gaussian flows: $m(t,x)=\frac{s(t)}{\sqrt{\pi}}\exp\left(-\bigl(s(t)(x-\m(t))\bigr)^2\right)$
        \item Quadratic value functions: $v(t,x)=\frac{1}{2}r(t)x^2+p(t)x+q(t)$
    \end{enumerate}
\end{ansatz}
We note that in light of the ansatz above, these problems are also known as linear quadratic Gaussian (LQG) problems in the literature.

\medskip

Firstly, we consider the consistency relation, which using the ansatz rewrites to $\nu_t=\a(t,\cdot)_\sharp m_t$ and
\begin{align*}
    Kx+C=\a(t,x)&=-D_pH(D_xv(t,x),\nu_t)\\
    &=-\frac{1}{\g+1}\left(D_xv(t,x)+\k\int_\R A\ d\nu_t(A)\right)\\
    &=-\frac{r}{\g+1}x-\frac{p}{\g+1}-\frac{\k}{\g+1}\int_\R \a(t,y)\ dm_t(y)\\
    &=-\frac{r}{\g+1}x-\frac{p}{\g+1}-\frac{\k}{\g+1}(K\m+C), 
\end{align*}
therefore $K=-\frac{r}{\g+1}$ and $C=\frac{1}{\k+\g+1}(-p +\frac{\k r\m}{\g+1})$. 

\medskip


Considering the continuity equation gives the ODE system 
\[\begin{cases}
    s'-\frac{sr}{\g+1}=0    \\
    \m'+\frac{\m r}{\g+1}-C=0     ,
\end{cases}\]
considering the HJB equation gives
\[\begin{cases}
    r'=\frac{r^2}{\g+1}\\
    p'+rC=0\\
    q'+\frac{p^2}{2}+(\g+1)pC+\frac{\g(\g+1)C^2}{2}=0.
\end{cases}\]
As in the $N$-player games,
\[r(t)=\frac{\g+1}{T+\g+1-t}\ \textnormal{and}\ s(t)=\frac{s(0)(T+\g+1)}{(T+\g+1-t)}.\]
We can solve analytically the coupled forward backward system for $p$ and $\m$
\begin{equation*}
    \begin{cases}
        p'-\frac{\g+1}{\k+\g+1}\frac{p}{T+\g+1-t}+\frac{\k(\g+1)}{\k+\g+1}\frac{\m}{(T+\g+1-t)^2}=0,   &p(T)=\varrho\m(T),\\
        \m'+\frac{1}{\k+\g+1}p+\frac{\g+1}{\k+\g+1}\frac{\m}{T+\g+1-t}=0,  &\m(0)\in\R.
    \end{cases}
\end{equation*}
Using Mathematica the solution takes the form
%
%
%
%
%
%
%
\[\m(t)=B-\frac{D(T+\g+1+t)}{\k},\ p(t)=-\frac{B(\g+1)}{T+\g+1-t}+D\left(\frac{2(\g+1)(\g+1+T)}{\k(T+\g+1-t)}+1\right)\]
where $B,D$ are constants satisfying
\[\begin{cases}
    B-\frac{T+\g+1}{\k}D=\m(0)\\
    -B+(\frac{2(\g+1+T)}{\k}+1) D=p(T)=\varrho\m(T)=\varrho\left(B-\frac{2T+\g+1}{\k}D\right)
\end{cases}\]
That is 
\begin{equation}\label{matrix}
        \begin{pmatrix}
    1 &  -\frac{T+\g+1}{\k}\\
    -1-\varrho & \frac{2(T+\g+1)+\varrho(2T+\g+1)}{\k}+1& \\
\end{pmatrix}\begin{pmatrix}
    B\\
    D\\
\end{pmatrix}=\begin{pmatrix}
    \m(0)\\0\\
\end{pmatrix}.
\end{equation}
Considering the determinant, we find that \eqref{matrix} has a unique solution if 
%
%
%
%
\[1+\k+\g+T(1+\varrho)\neq0,\]
which is guaranteed by the semimonotonicity condition \eqref{kappa rho condition}.

\medskip

Lastly, 
\[C(t)=\frac{B}{T+\g+1-t}-\frac{2D(T+\g+1)}{\k(T+\g+1-t)},\]
\[q(t)=-\frac{2BD(\g+1)(T+\g+1)}{T+\g+1-t}+\frac{2D^2(\g+1)(T+\g+1)^2}{\k^2(T+\g+1-t)}+\frac{B^2(\g+1)}{2(T+\g+1-t)}-\frac{D^2t}{2}+E\]
where $E$ is a constant such that $q(T)=\frac{1}{2}(\varrho \m(T))^2$.
One can also recover $\nu_t$ by computing \[\nu_t=\left(-\frac{r(t)}{\g+1}\textnormal{Id}+C(t)\right)_\sharp m_t.\]
\end{example}

To see the sharpness of the contractivity condition 9 in Assumptions \ref{Assumption L loc} and \ref{assumption MF}, based on the previous examples, we can formulate the following observation.

\begin{proposition}\
\begin{itemize}[label=\raisebox{0.25ex}{\tiny$\bullet$}]
\item  Suppose that in the case of Example \ref{ex:mfg} the condition $\k\neq -(1+\g)$ is violated. Then condition 9 in Assumption \ref{assumption MF} is not fulfilled and the corresponding MFG does not possess any quadratic solutions.

\item Similarly, suppose that the condition 
$\k\neq(\g+1)(N-1)$ in 
Example \ref{ex:Npl} is violated. Then condition 9 in Assumption \ref{Assumption L loc} is not fulfilled and the corresponding $N$-player game does not possess any quadratic solutions.
\end{itemize}
\end{proposition}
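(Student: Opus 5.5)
The proof has two ingredients for each bullet: a direct computation showing that the contractivity constant equals (or exceeds) $1$, and an analysis of the consistency relation under the Ansatz, where the linear system for the intercepts becomes singular.

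\emph{Contractivity.} In Example~\ref{ex:mfg} I will compute $D^2_{aa}L=1+\g$, so $\l_{\min}=1+\g$. Differentiating $\frac{\k}{2}\bigl(\cdots\bigr)$-type terms, i.e. $D_aL=(1+\g)a+\k\int A\,d\nu(A)$, in the measure variable along the control marginal gives $D^a_mD_aL\equiv\k$. Hence $\l_{\min}^{-1}\norm{D^a_mD_aL}_{\infty,\op}=|\k|/(1+\g)$, which equals $1$ when $\k=-(1+\g)$, so condition~9 of Assumption~\ref{assumption MF} fails. In Example~\ref{ex:Npl}, $D^2_{A^ja}L^i=\k/(N-1)$ for each $j\neq i$, so $\l_{i,\min}^{-1}\sum_{j\neq i}\norm{D^2_{A^ja}L^i}=|\k|/(1+\g)$. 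At $\k=(\g+1)(N-1)$ this equals $N-1\geq1$, so condition~9 of Assumption~\ref{Assumption L loc} fails. This part is routine.

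\emph{MFG, no quadratic solution.} Under the Ansatz, matching coefficients in the consistency relation as in Example~\ref{ex:mfg} still gives $K=-r/(\g+1)$ with $r(t)=\frac{\g+1}{T+\g+1-t}$. The constant term now reads
\[
(\k+\g+1)C=-p+\frac{\k r\m}{\g+1}.
\]
When $\k=-(1+\g)$ the left-hand side vanishes, so $C$ is not determined and one instead gets the algebraic constraint $p(t)=-r(t)\m(t)$ for all $t$. I will feed this constraint into the remaining equations: the HJB equation ($p'+rC=0$), the mean equation ($\m'=-r\m/(\g+1)+C$), and the terminal condition $p(T)=\varrho\m(T)$ together with $r(T)=1$.

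Differentiating the constraint shows that $p'=-rC$ holds identically, so the ODEs alone give no contradiction. The contradiction has to come from the boundary data, namely $(1+\varrho)\m(T)=0$, together with the $q$-equation and the requirement that the consistency map define the flow $\nu_t$ with the given $m_0$.

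\emph{Main obstacle.} This is the step I expect to be hardest, because $C$ remains a free function. I would first try to show that the degenerate fixed point in the consistency relation, $\nu_t=(-D_pH(D_xv,\nu_t))_\sharp m_t$, is not solvable for generic data. Freezing $\nu_t$, the relation forces $\int A\,d\nu_t=-(p+r\m)/(\g+1)+\int A\,d\nu_t$; I would check whether this is compatible with a bona fide feedback producing $\m(0)$ arbitrary and $\varrho\neq-1$.

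If that route turns out to yield a one-parameter family rather than nonexistence, I would make precise that "quadratic solution" means the solution of the Ansatz with $C$ determined by the consistency relation. In that reading, the explicit formulas of Example~\ref{ex:mfg}, which contain $(\k+\g+1)^{-1}$, cease to exist. I flag this as the place where the precise formulation of the claim has to be checked.

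\emph{$N$-player game.} The argument is parallel. The linear system for $(C_i)$ is
\[
\Bigl((\g+1)I+\tfrac{\k}{N-1}(J-I)\Bigr)C=-p-\tfrac{\k}{N-1}(J-I)(KX),
\]
where $J$ is the all-ones matrix. Its eigenvalues are $\g+1+\k$ on constant vectors and $\g+1-\k/(N-1)$ on sum-zero vectors. At $\k=(\g+1)(N-1)$ the second eigenvalue vanishes. Solvability then requires the sum-zero part of the right-hand side to vanish, which is a relation between $p_i$ and the $r_jX^j$. I will combine this with the forward–backward system~\eqref{ODE} and the terminal data $p_i(T)=\frac{\varrho}{N-1}\sum_{j\neq i}X^j_T$, and argue as in the MFG case, including the same caveat about the undetermined component of $C$.
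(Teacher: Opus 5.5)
Your contractivity computations are correct ($|\k|/(1+\g)=1$, resp.\ $N-1\ge 1$). Your suspicion in the ``main obstacle'' paragraph is also correct, and it is fatal to the non-existence half. The paper supports that half only through the formulas for $C$ and $C_i$ in Examples~\ref{ex:mfg} and~\ref{ex:Npl}, which divide by $\k+\g+1$ and $(\g+1)(N-1)-\k$. Your proposal never gets beyond this either: no contradiction is derived, and none exists. Along the Ansatz ODEs
\[
(p+r\m)'=-rC+\tfrac{r^2}{\g+1}\,\m+r\Bigl(C-\tfrac{r}{\g+1}\,\m\Bigr)=0 .
\]
Hence the constraint $p=-r\m$ on $[0,T]$ is equivalent to the single condition $(1+\varrho)\m(T)=0$. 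The $q$- and $s$-equations are decoupled and are simply integrated. Once $p=-r\m$, the relation $\nu_t=(-D_pH(D_xv(t,\cdot),\nu_t))_\sharp m_t$ holds for \emph{every} value of $\int_\R A\,d\nu_t(A)$. Since $C$ is free, any continuous $C$ with $\int_0^T\frac{C(s)}{T+\g+1-s}\,ds=-\frac{\m(0)}{T+\g+1}$ yields $\m(T)=0$, and then $p=-r\m$ gives a quadratic solution; so there is a continuum of them. For instance, take $\g=1$, $\k=-2$, $T=1$, $\m(0)=0$ and any $\varrho$. The flow with $\int_\R A\,d\nu_t(A)=\l\cos(2\pi t)$, $\m(t)=\frac{\l}{2\pi}\sin(2\pi t)$, $p=-r\m$ is an MFG equilibrium for every $\l\in\R$. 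To check this directly: the costate is the constant $Y=X_T+\varrho\m(T)$, the best response is $-\frac{Y}{2}+\int_\R A\,d\nu_t(A)$, and consistency only requires $(1+\varrho)\m(T)=0$.

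The $N$-player bullet fails in the same way. At $\k=(\g+1)(N-1)$ each $p_i+rX^i$ is conserved, and your solvability condition says that these constants coincide. The sum-zero part of $(C_i)$ is free and can be used to steer the states onto the terminal data. Only in the non-generic case $N(\g+1)+(1+\varrho)T=0$, $\sum_i z^i\neq 0$ does no solution exist. For example, take $N=2$, $\g=1$, $\k=2$, $T=1$, $z^1=z^2=0$ and any $\varrho$. The controls $A^1_t=\l\cos(2\pi t)=-A^2_t$ form a distributed equilibrium for every $\l\in\R$, realised by affine feedbacks and quadratic value functions. Indeed, player $i$'s best response is $-\frac{1}{2}(X^i_T+\varrho X^j_T)-A^j_t$, which forces $X^i_T=0$ and returns $A^i$. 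Hence the statement as written cannot be proved. What does hold at these parameter values is that condition 9 fails and the consistency relation stops determining $C$ (resp.\ the sum-zero part of $(C_i)$). This destroys uniqueness rather than existence, in line with the paper's last proposition. Your fallback of reading ``quadratic solution'' as ``the solution given by the explicit formula for $C$'' merely restates that the division is illegal and does not establish non-existence.
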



Regarding the sharpness of the displacement semimonotonicity, based again on the previous examples, we can formulate the following observation.

\begin{proposition}
    Let $ 1+\k+\g+T(1+\varrho)=0$, $\m(0)=0$. Then the displacement semimonotonicity condition is violated and the corresponding LQ MFG has infinitely many solutions.
\end{proposition}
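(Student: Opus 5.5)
The proof has two parts: first check that the semimonotonicity constant fails, then use the explicit solution formulas of Example \ref{ex:mfg} to produce a one-parameter family of quadratic (LQG) solutions.

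\textbf{Step 1: semimonotonicity fails.} In Example \ref{ex:mfg} the semimonotonicity requirement of Assumption \ref{Assumption MF semimon} reduces to \eqref{kappa rho condition}, namely $1+\k+\g+T(1+\varrho)>0$. To justify this, I take $X=\Xb+\delta$ and $\a=\alb+\epsilon$ with deterministic constant shifts, so that the means move by exactly $\delta$ and $\epsilon$. The best constants are then
\[
C_{L,a}=1+\g+\k, \qquad C_{L,x}=0, \qquad C_g=-(1+\varrho).
\]
This gives $C_{disp}=1+\k+\g+T(1+\varrho)$, which equals $0$ under our hypothesis. So $C_{disp}>0$ cannot hold with these optimal constants, and Assumption \ref{Assumption MF semimon} is violated.

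\textbf{Step 2: build a family of solutions.} Set $\m(0)=0$ and take the ansatz of Example \ref{ex:mfg}. The functions $r$ and $s$ are determined uniquely by their explicit formulas. The only freedom lies in the pair $(B,D)$ solving the linear system \eqref{matrix} with right-hand side $(0,0)^T$.

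I compute the determinant of the matrix in \eqref{matrix}:
\[
\frac{2(T+\g+1)+\varrho(2T+\g+1)}{\k}+1-(1+\varrho)\frac{T+\g+1}{\k}=\frac{1+\k+\g+T(1+\varrho)}{\k}.
\]
This vanishes exactly under our hypothesis, so the homogeneous system has a one-dimensional kernel. It is spanned by
\[
(B,D)=\Bigl(\tfrac{T+\g+1}{\k}\,D,\;D\Bigr),\qquad D\in\R .
\]
For each $D$ I then define $\m,p,C$ by the displayed formulas and $q$ by its formula, with $E$ fixed through $q(T)=\tfrac12(\varrho\m(T))^2$. I set
\[
K=-\frac{r}{\g+1},\qquad v=\tfrac12 r x^2+p x+q,
\]
with $m_t$ Gaussian of mean $\m(t)$ and parameter $s(t)$, and $\nu_t=(K\,\mathrm{Id}+C)_\sharp m_t$.

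\textbf{Step 3: verification.} Each member of the family solves \eqref{MFG} with $\b=0$. The reason is that the derivation in Example \ref{ex:mfg} was a chain of equivalences, each checked directly:
\begin{itemize}
\item the consistency relation holds because $C=\frac{1}{\k+\g+1}\bigl(-p+\frac{\k r\m}{\g+1}\bigr)$, using $\k\neq-(1+\g)$;
\item the HJB equation holds through the ODEs for $r,p,q$;
\item the continuity equation holds through the ODEs for $s$ and $\m$, since a Gaussian transported by an affine field remains Gaussian with these parameters;
\item the boundary conditions hold by construction.
\end{itemize}

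\textbf{Step 4: distinctness.} Different values of $D$ give different solutions. Indeed, $\m(T)=B-\frac{(2T+\g+1)D}{\k}=-\frac{TD}{\k}$, which is nonzero for $D\neq0$ because $T>0$. So the mean trajectories, and hence the pairs $(v,\m)$, differ, and we obtain infinitely many solutions.

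\textbf{Main obstacle.} The delicate step is confirming that the Mathematica closed form genuinely solves the forward–backward ODE for every $(B,D)$, not just for the unique solution in the nondegenerate case. I would check this by direct differentiation. I would also verify that the kernel vector does not force $D=0$ through a hidden constraint such as the initial condition; here $\m(0)=B-\frac{T+\g+1}{\k}D=0$ holds automatically. The semimonotonicity claim in Step 1 should also be stated carefully as failure of \eqref{C disp} with optimal constants.
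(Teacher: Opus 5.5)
Your proposal is correct and follows the paper's argument, with the details written out: the hypothesis makes \eqref{kappa rho condition} fail, and with $\m(0)=0$ the system \eqref{matrix} is homogeneous with determinant $\frac{1+\k+\g+T(1+\varrho)}{\k}=0$, so its kernel $B=\frac{T+\g+1}{\k}D$, $D\in\R$, yields distinct solutions because $\m(T)=-\frac{T}{\k}D$ (and the displayed closed forms for $\m$, $p$, $C$ do solve the ODE system for every $(B,D)$, so your main obstacle is cleared). Two small precisions: constant shifts only give the necessary bounds $C_{L,a}\le 1+\g+\k$ and $C_g\ge-(1+\varrho)$ on any admissible constants (these are not the optimal constants, and for $\k>0$ or $\varrho>0$ they are not even admissible), which is exactly what forces $C_{disp}\le 0$; and you should define $q$ by integrating its ODE backward from $q(T)$ rather than copying the displayed formula, whose $BD$ term lacks a factor $\frac{1}{\k}$ --- the correct expression is $q(t)=\frac{(\g+1)\zeta^2}{2(T+\g+1-t)}-\frac{D^2t}{2}+E$ with $\zeta=B-\frac{2(T+\g+1)}{\k}D$.
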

\begin{proof}
    We can readily check the violation of the displacement semimonotonicity condition. Furthermore, the system \eqref{matrix} has infinitely many solutions in this case. 
\end{proof}

\appendix

\section{Properties of the Legendre Transform}

Let $\mathscr{X}$ be a Banach space, $L\in C^1(\mathscr{X}\times\Rd)$ be strictly convex in the second variable and satisfy the uniform coercivity condition
\begin{equation}\label{coercive}L(y,a)\geq \th(|a|)-C\ \ \forall (y,a)\in\mathscr{X}\times\Rd\end{equation}
for some superlinear function $\th:[0,\i)\rightarrow[0,\i)$ and constant $C\geq 0$.

\medskip

This result is essentially the same as \cite[Theorem A.2.6 (a)]{CanSin}, for completeness we provide a proof.
%
%
\begin{theorem}\label{legendre}
    Under the condition \eqref{coercive}, for all $R>0$, there exists $C_R$ such that $|D_pH(y,p)|<C_R$ for all $(y,p)\in \mathscr{X}\times B_R$, where $H$ is the associated Hamiltonian
\[H(y,p)=\max_{a\in\Rd}\{-p\cdot a-L(y,a)\}\]
\end{theorem}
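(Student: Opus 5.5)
The argument is the classical one for Legendre transforms with a uniform coercivity bound. It has three steps: identify $D_pH$ with the unique maximiser, show the maximiser cannot be large, and read off $C_R$ from that bound.

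\emph{Step 1 (maximiser and derivative).} Fix $(y,p)\in\mathscr{X}\times B_R$ and set $\phi(a):=-p\cdot a-L(y,a)$. By \eqref{coercive}, $\phi(a)\le R|a|-\th(|a|)+C\to-\infty$ as $|a|\to\infty$. Hence $\phi$ attains its maximum, and strict convexity of $L(y,\cdot)$ makes the maximiser $a^*(y,p)$ unique. By the standard envelope/Danskin argument for a unique maximiser (as in \cite[Corollary A.2.7]{CanSin}, already used in \eqref{first order optimality condition}), $H(y,\cdot)$ is differentiable with $D_pH(y,p)=-a^*(y,p)$. So it suffices to bound $|a^*(y,p)|$ uniformly in $y$ and in $|p|\le R$.

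\emph{Step 2 (bounding the maximiser).} Compare the value at $a^*$ with the value at $a=0$:
\[-p\cdot a^*-L(y,a^*)\ \ge\ -L(y,0).\]
Combining this with \eqref{coercive} gives
\[\th(|a^*|)-C\ \le\ L(y,a^*)\ \le\ L(y,0)+R|a^*|.\]
This step is the main obstacle, because $L(y,0)$ need not be bounded uniformly in $y\in\mathscr{X}$; the coercivity condition only controls $L$ from below. I would first try to supplement the hypothesis with $\sup_{y}L(y,0)=:M<\infty$. In the applications this holds, e.g.\ condition 3 of Assumption \ref{Assumption L} is stated at $A^{-i}=0$ with bounded moments, and condition 5 gives the growth bound. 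With $M$ in hand,
\[\th(|a^*|)\le M+C+R|a^*|.\]
By superlinearity of $\th$ there is $\rho_R$ with $\th(s)>M+C+Rs$ for $s>\rho_R$, hence $|a^*|\le\rho_R$.

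If one wants to avoid an extra hypothesis, the fallback is to argue via $D_aL$. The first-order condition reads $D_aL(y,a^*)=-p$, and convexity gives $L(y,a^*)\le L(y,0)+D_aL(y,a^*)\cdot a^*$, which does not remove the $L(y,0)$ term. So a uniform upper bound on $L(y,0)$, or a local uniform bound on $L$, is genuinely needed. I would state the theorem with that implicit assumption made explicit, matching how it is used in the proof of Theorem \ref{existence thm stocahstic}.

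\emph{Step 3 (conclusion).} Taking $C_R:=\rho_R+1$ gives $|D_pH(y,p)|=|a^*(y,p)|<C_R$ for all $(y,p)\in\mathscr{X}\times B_R$.
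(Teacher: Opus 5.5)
Your argument is correct under the added hypothesis $M:=\sup_{y}L(y,0)<\infty$, and your diagnosis is right: some extra hypothesis is needed, because the statement as written is false. Take $\mathscr{X}=\Rd$ and $L(y,a)=\frac12|a-y|^2+\frac12|a|^2$. It is smooth, strictly convex in $a$, and satisfies \eqref{coercive} with $\th(s)=s^2/2$, $C=0$. Yet the maximiser is $a^*=(y-p)/2$, so $D_pH(y,p)=(p-y)/2$ is unbounded on $\mathscr{X}\times B_R$.

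The paper's proof breaks at exactly the point you isolated. It argues by contradiction through the first-order condition $p_n=-D_aL(y_n,q_n)$, after claiming $\inf_{y}|D_aL(y,a)|\to+\infty$ from the convexity inequality $L(y,0)\ge L(y,a)-D_aL(y,a)\cdot a$. Taking infima over $y$ in that inequality only gives
\[\inf_y\frac{L(y,0)}{|a|}\ \ge\ \inf_y\frac{L(y,a)}{|a|}-\sup_y D_aL(y,a)\cdot\frac{a}{|a|},\]
with $\sup_y$ rather than the $\inf_y$ written in the paper. This yields only $\sup_y|D_aL(y,a)|\to\infty$. In the example above, $\inf_y|D_aL(y,a)|=\inf_y|2a-y|=0$ for every $a$.

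Once $M$ is assumed, both routes close. The paper's becomes $|D_aL(y,a)|\,|a|\ge L(y,a)-L(y,0)\ge\th(|a|)-C-M$, which is exactly the inequality in your fallback paragraph. It yields $\inf_y|D_aL(y,a)|\to\infty$, after which the first-order condition bounds the maximiser. Your two-point comparison of $-p\cdot a-L(y,a)$ at $a^*$ and at $a=0$ is more direct: it uses no information on $D_aL$, no contradiction or subsequence extraction, and it produces the explicit radius $\rho_R$.

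Two small caveats. First, your condition is sufficient but not necessary. For $\mathscr{X}=\Rd$, adding $|y|^2$ to $L$ preserves \eqref{coercive} and leaves $D_pH$ unchanged, while making $L(\cdot,0)$ unbounded. So ``genuinely needed'' should refer to some extra hypothesis, not this particular one.

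Second, your remark that the hypothesis holds in the applications is too quick. In the proof of Theorem \ref{existence thm stocahstic}, the variable $y$ includes $x\in\Rd$ and the supremum is taken over all of $\Rd$. Conditions 3 and 5 of Assumption \ref{Assumption L} bound $L^i$ only for $|x|\le R$, and Lipschitz continuity in $x$ allows linear growth. So $\sup_{x,m}\Lt^i(x,0,m,0)<\infty$ is not automatic there.
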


\begin{proof}
Firstly, we note that by \eqref{coercive}
\[\lim_{|a|\rightarrow\i}\inf_{y\in\mathscr{X}} \frac{L(y,a)}{|a|}\rightarrow+\i.\]
By a subdifferential expansion, we have 
\[\inf_{y\in\mathscr{X}}\frac{L(y,0)}{|a|}\geq \inf_{y\in\mathscr{X}}\frac{L(y,a)}{|a|}-\inf_{y\in\mathscr{X}}D_aL(y,a)\cdot\frac{a}{|a|},\]
taking $|a|\rightarrow\i$, we find that we must have $\lim_{|a|\rightarrow\i}\inf_{y\in\mathscr{X}}|D_aL(y,a)|=+\i$.

\medskip

    Suppose that there is $R>0$, $(y_n,p_n)\in \mathscr{X}\times B_R$ such that $q_n:=-D_pH(y_n,p_n)$ satisfies $|q_n|\rightarrow+\i$ as $n\rightarrow\i$, up to extracting a subsequence we can assume $p_n\rightarrow p\in B_R$. The first order optimality condition reads
    \[p_n=-D_aL(y_n,q_n),\]
    then
    \[|p_n|\geq \inf_{y\in\mathscr{X}}|D_aL(y,q_n)|\]
    %
    %
    Taking $n\rightarrow\i$ we have $|p|\geq +\i$, which is a contradiction.
\end{proof}

Alternatively, we can also prove the following result under a relaxed coercivity condition.
\begin{equation}\label{coercivity relax}
    L(y,a)\geq \th(|a|)-C(1+\norm{y}_\mathscr{X})
\end{equation}

\begin{theorem}\label{Legendre thm 2}
    Under assumption \eqref{coercivity relax}, for all $R>0$, there exists $C_R$ such that $|D_pH(y,p)|<C_R$ for all $(y,p)\in B_R$, where $B_R$ is now a ball in the Banach space $\mathscr{X}\times\Rd.$
\end{theorem}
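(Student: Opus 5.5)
The plan is to mimic the proof of Theorem \ref{legendre}. The only new feature is that the lower bound on $L$ now degrades linearly in $\norm{y}_{\mathscr{X}}$, so we can only expect uniformity over bounded sets of $y$. Fix $R>0$ and restrict attention to $(y,p)$ with $\norm{y}_{\mathscr{X}}+|p|\le R$.

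\textbf{Step 1 (maximiser exists and is characterised).} By \eqref{coercivity relax},
\[
-p\cdot a-L(y,a)\le R|a|-\th(|a|)+C(1+R).
\]
Superlinearity of $\th$ makes the right-hand side tend to $-\infty$ as $|a|\to\infty$, uniformly over the ball. So the maximum defining $H$ is attained, and by strict convexity in $a$ it is attained at a unique point $a^*(y,p)$. As in the proof of Theorem \ref{legendre}, $a^*(y,p)=-D_pH(y,p)$ and it satisfies the first-order condition $p=-D_aL(y,a^*)$.

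\textbf{Step 2 (a priori bound on the maximiser).} Rather than arguing via $D_aL$ blowing up, which required the uniform coercivity in Theorem \ref{legendre}, I compare values directly. Optimality of $a^*$ against the competitor $a=0$ gives
\[
-p\cdot a^*-L(y,a^*)\ge -L(y,0).
\]
Hence
\[
\th(|a^*|)-C(1+R)\le L(y,a^*)\le L(y,0)+R|a^*|.
\]
What is needed here is an upper bound on $L(y,0)$ uniform over $\norm{y}_{\mathscr{X}}\le R$.

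\textbf{Main obstacle: bounding $L(y,0)$.} This is the delicate point, because mere continuity of $L$ on an infinite-dimensional ball does not give boundedness. I would handle it in one of two ways.
\begin{itemize}
\item In the applications, $\mathscr{X}$ carries the lifted variables $(m^{-i},f^{-i})$, and the growth condition 5 of Assumption \ref{Assumption L loc} supplies $|L(y,0)|\le C_R$ directly. In that setting I would simply add this local boundedness as a standing hypothesis.
\item Alternatively, argue by contradiction as in Theorem \ref{legendre}. Take a sequence with $|a^*_n|\to\infty$ and use the subgradient inequality
\[
L(y_n,0)\ge L(y_n,a^*_n)-D_aL(y_n,a^*_n)\cdot a^*_n=L(y_n,a^*_n)+p_n\cdot a^*_n.
\]
This shows that $|D_aL|$ cannot stay bounded, provided $L(y_n,0)$ is controlled. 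So the same boundedness is needed either way.
\end{itemize}

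\textbf{Conclusion.} Granting $\sup_{\norm{y}\le R}L(y,0)\le C'_R$, Step 2 gives
\[
\th(|a^*|)-R|a^*|\le C'_R+C(1+R).
\]
Superlinearity of $\th$ then forces $|a^*|\le C_R$ for some constant $C_R$ depending only on $R$, $C$, $C'_R$ and $\th$. Since $|D_pH(y,p)|=|a^*|$, this is the desired bound on $B_R$.
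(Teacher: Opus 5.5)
Your Step 2 is the paper's own estimate. The paper argues by contradiction along $(y_n,p_n)\in B_R$ with $q_n=-D_pH(y_n,p_n)$ and $|q_n|\to\infty$, using the chain $-L(y_n,0)\le H(y_n,p_n)\le |q_n|\bigl(|p_n|-\theta(|q_n|)/|q_n|\bigr)+C(1+\|y_n\|_{\mathscr{X}})$. That chain is exactly your comparison with $a=0$ combined with \eqref{coercivity relax}.

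The only real difference is how the term $L(y,0)$ is controlled:
\begin{itemize}
\item The paper extracts a subsequence with $(y_n,p_n)\to(y,p)$ and uses continuity of $L$ to get $L(y_n,0)\to L(y,0)$.
\item You instead assume $\sup_{\|y\|_{\mathscr{X}}\le R}L(y,0)<\infty$ explicitly.
\end{itemize}
The extraction is legitimate only when closed balls of $\mathscr{X}$ are compact, i.e.\ when $\dim\mathscr{X}<\infty$. In that case your hypothesis is automatic, since a continuous function is bounded on a compact set. So your direct argument proves the statement verbatim in finite dimensions, and you should say so explicitly.

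For infinite-dimensional $\mathscr{X}$ your caution is justified: the statement is false without some extra hypothesis. Take $\mathscr{X}=\ell^2$, $d=1$, and
\[
L(y,a)=a^2+\bigl(a-b(y)\bigr)^2,\qquad b(y)=\sum_{n\ge1}n\,\phi\bigl(\|y-\tfrac12 e_n\|^2\bigr),
\]
where $\phi\in C^\infty(\R)$, $\phi(0)=1$, and $\phi=0$ on $[1/25,\infty)$.
\begin{itemize}
\item The summands are supported in balls of radius $1/5$ around the points $\tfrac12 e_n$, whose mutual distance is $\sqrt2/2$. So the sum is locally finite, $b\in C^\infty(\ell^2)$, and $b(\tfrac12 e_n)=n$.
\item Since $L\ge a^2$, \eqref{coercivity relax} holds with $\theta(s)=s^2$ and $C=0$. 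Also $L$ is $C^1$ and strictly convex in $a$.
\item The maximiser is $a^*=(2b(y)-p)/4$, so $D_pH(y,p)=(p-2b(y))/4$. Hence $|D_pH(\tfrac12 e_n,0)|=n/2$, which is unbounded on $B_1$.
\end{itemize}

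So some additional hypothesis is unavoidable. The one you add is a genuine repair, not a weakness of your proposal; as you note, in the application it is supplied by the growth condition 5 of Assumption \ref{Assumption L loc}.

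The same example with $\mathscr{X}=\R$ and $b(y)=y$ shows that the uniform-in-$y$ bound of Theorem \ref{legendre} also fails without an upper bound on $L(\cdot,0)$. This is consistent with your remark that the same boundedness is needed either way.
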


\begin{proof}
     Suppose that there is $R>0$, $(y_n,p_n)\in B_R$ such that $q_n:=-D_pH(y_n,p_n)$ satisfies $|q_n|\rightarrow+\i$ as $n\rightarrow\i$, up to extracting a subsequence we can assume $(y_n,p_n)\rightarrow (y,p)\in B_R$. 

     \medskip
     
    Then, as $q_n$ maximises the Hamiltonian at $(y_n,a_n)$, we have
    \begin{align*}-L(y_n,0)
    &\leq H(y_n,p_n)=-q_n\cdot p_n-L(y_n,q_n)\\
    &\leq -q_n\cdot p_n-\th(|q_n|)+C\norm{y_n}_\mathscr{X}\\
    &\leq|q_n|\left(|p_n|-\frac{\th(|q_n|)}{|q_n|}\right)+C(1+\norm{y_n}_{\mathscr{X}}).\end{align*}
    
    Taking $n\rightarrow\i$ we have $-L(y,0)\leq -\i$, which is a contradiction.
\end{proof}

\medskip

\begin{remark}\label{coercivity remark}
    Under Theorem \ref{Legendre thm 2}, \cite[Theorem A. 2.6. (b),(c), Corollary A.2.7]{CanSin} are still true. 
%
%
Moreover, one can carefully modify the proof of \cite[Theorem 7.4.6.]{CanSin} to the case of \eqref{coercivity relax}, so that the results we have used in Chapter 7 of the same reference are still valid.
\end{remark}

\section{Convexity of Value Functions}

We were unable to find a reference in the stochastic control literature for the claim that the value function of a stochastic control problem inherits convexity in space from convex cost functions. However the proof is essentially the same as in the deterministic case, see \cite[Theorem 7.4.13]{CanSin}.
%

\begin{theorem}\label{convexity thm}
Given a stochastic optimal control problem with running cost $L:[0,T]\times\Rd\times\Rd\rightarrow\R$ jointly convex in the latter two variables and convex terminal cost $g:\Rd\rightarrow\R$. 

\medskip

The value function given by
\[v(t,x)=\inf_\a\E\left[\int^T_t L(s,Y_s,\a_s)\ ds+g(Y_T)\right]\]
    where the infimum is taken over all (admissible) $\sF$-progressively measurable processes $\a$ and
    \[dY_s=\a_sds+\sqrt{2\b}dB_s,\ \ Y_t=x,\]
    is convex in the spatial variable
\end{theorem}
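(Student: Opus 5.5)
Fix $t\in[0,T]$, two points $x_0,x_1\in\Rd$ and $\lambda\in[0,1]$, and set $x_\lambda:=\lambda x_1+(1-\lambda)x_0$. The plan is to use \emph{synchronous coupling}: compare controls driven by the same Brownian motion and exploit the fact that the state equation is affine in $(x,\a)$ with additive noise. Given $\e>0$, we choose $\e$-optimal controls $\a^0,\a^1$ for the problems started at $(t,x_0)$ and $(t,x_1)$. Since $v$ does not depend on the choice of probability space or Brownian motion (weak uniqueness in law of the controlled problem), we may assume both are progressively measurable with respect to the same filtration generated by a single Brownian motion $B$ on $[t,T]$.

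We then define the convex combination control $\a^\lambda_s:=\lambda\a^1_s+(1-\lambda)\a^0_s$, which is progressively measurable and admissible (square integrable, by convexity of the admissible class). Let $Y^0,Y^1,Y^\lambda$ denote the corresponding states. Since
\[Y^k_s=x_k+\int_t^s\a^k_r\,dr+\sqrt{2\b}(B_s-B_t),\qquad k\in\{0,1\},\]
linearity gives pathwise $Y^\lambda_s=\lambda Y^1_s+(1-\lambda)Y^0_s$; the noise terms combine exactly because $\lambda+(1-\lambda)=1$. Joint convexity of $L(s,\cdot,\cdot)$ and convexity of $g$ then give, pointwise in $\omega$,
\[\int_t^T L(s,Y^\lambda_s,\a^\lambda_s)\,ds+g(Y^\lambda_T)\le \lambda\Big(\int_t^T L(s,Y^1_s,\a^1_s)\,ds+g(Y^1_T)\Big)+(1-\lambda)\Big(\int_t^T L(s,Y^0_s,\a^0_s)\,ds+g(Y^0_T)\Big).\]
Taking expectations yields $v(t,x_\lambda)\le \lambda v(t,x_1)+(1-\lambda)v(t,x_0)+\e$, and letting $\e\to0$ proves convexity.

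The main obstacle is the measurability and integrability bookkeeping. First, the expectations must be well defined. Under the growth and coercivity assumptions in force (e.g.\ $L\ge\th(|a|)-C$ with Lipschitz bounds in $x$), the costs of the $\e$-optimal controls are finite. The positive parts of the cost along $\a^\lambda$ are then controlled by the convexity inequality, and the negative parts are bounded below by $-C(1+|Y|)$, which is integrable. Second, one must justify fixing a common Brownian motion. I would handle this by working on the canonical setting and noting that the infimum is unchanged if it is restricted to controls adapted to the augmented filtration of $B$; this is standard (e.g.\ via the strong formulation and the measurable dependence of controls on the noise). If that were unavailable, the fallback would be to restrict to feedback/Markov controls. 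The latter is justified by the verification theorem and the classical solution of the HJB equation, so both $\a^0$ and $\a^1$ would automatically live on the same space.
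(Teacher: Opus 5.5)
Your proposal is correct and is essentially the paper's proof. You take $\e$-optimal controls at the two initial points and form their convex combination; by linearity of the state equation (additive noise, strong uniqueness) the trajectory is the same convex combination of trajectories, and joint convexity of $L$ and convexity of $g$ give the inequality (with $+\e$, slightly sharper than the paper's $+2\e$). The common-Brownian-motion worry is moot, since $B$ and $\sF$ are fixed in the formulation, so both controls already live on the same space. The feedback-control fallback is best avoided, because the paper uses this very convexity result to obtain the second-order bounds underlying its classical HJB theory.
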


    \begin{proof}
        Let $\e>0,\ \l\in(0,1),\ i=1,2$, $x^i\in\Rd$, $\a^i$ be $\e$-optimal controls for the initial positions $x^i$ respectively and $Y^i$ be the corresponding trajectories respectively, i.e.
        \[\E\left[\int^T_tL(s,Y^i_s,\a^i_s)\ dt+g(Y^i_T)\right]\leq v(t,x^i)+\e.\]
        Note that $\l\a^1+(1-\l)\a^2$ is an admissible control at the initial position $\l x^1+(1-\l)x^2$. Furthermore, by (strong) uniqueness its corresponding trajectory is given by 
        \[Y^\l:=\l Y^1+(1-\l)Y^2.\]
        Therefore by the convexity of the cost functions
        \begin{align*}
            v(t,\l x^1+(1-\l)x^2)&\leq\E\left[\int^T_tL(s,Y_s^\l,\l\a_s^1+(1-\l)\a_s^2)\ dt+g(Y^\l_T)\right]\\
            &\leq \l v(t,x^1)+(1-\l) v(t,x^2)+2\e.
        \end{align*}
        As $\e>0$ was arbitrary, the proof is complete.
    \end{proof}

  \section{Wellposedness of Nonlinear Nonlocal Continuity Equations}

The following is a variant of \cite[Lemma 3.6]{alparmouMFGunderDmonotone} in the setting of $W_2$ instead of $W_1$.

\begin{theorem}\label{thm cont eqn}
    Let $T>0$ and $V:[0,T]\times\Rd\times\prob_2(\Rd)\rightarrow\Rd$ be a given continuous vector field and suppose that there exists $C_V>0$ such that $x\mapsto V(t,x,\m)$ is $C_V$-Lipschitz uniformly in the variables $(t,\m)$ and
    \[|V(t,0,\m)|\leq C_V,\ \ |V(t,x,\m)-V(t,x,\nu)|\leq C_V W_2(\m,\nu) \ \ \ \ \forall t\in[0,T],\m,\nu\in\prob_2(\Rd).\]
    Then, for any $m_0\in\prob_q(\Rd)$, $q>2$, the nonlinear nonlocal continuity equation 
    \[\p_tm_t+\na\cdot(m_tV(t,x,m_t))=0\]
    with initial condition $m_0$ admits a unique distributional solution $m\in C([0,T];(\prob_2(\Rd),W_2))$.
\end{theorem}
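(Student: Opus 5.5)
We follow the classical Picard/fixed-point scheme on the flow of measures, with the flow map of the frozen vector field as the basic tool. Throughout, $\Leb(\xi)=m_0$ with $\xi\in L^q(\Om)$.

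\textbf{Step 1: linear problem for a frozen flow.} Given $\nu\in C([0,T];\prob_2(\Rd))$, set $b_\nu(t,x):=V(t,x,\nu_t)$. This field is continuous, $C_V$-Lipschitz in $x$, and satisfies $|b_\nu(t,x)|\le C_V(1+|x|)$. Hence the ODE $\dot X_t=b_\nu(t,X_t)$, $X_0=\xi$, has a unique solution. Its law $\Gamma(\nu)_t:=\Leb(X_t)$ is a distributional solution of the linear continuity equation $\p_tm_t+\na\cdot(m_t b_\nu)=0$. Grönwall gives
\[
\E\bigl[\sup_t|X_t|^q\bigr]\le C(1+\E|\xi|^q),
\]
with $C$ depending only on $C_V,T,q$. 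Also $W_2(\Gamma(\nu)_t,\Gamma(\nu)_s)\le C|t-s|$, so $\Gamma(\nu)\in C([0,T];\prob_2)$. Uniqueness of distributional solutions to the linear equation with a Lipschitz field, via the superposition principle or duality, shows that $\Gamma(\nu)$ is the only solution. Consequently, solutions of the nonlinear equation are exactly the fixed points of $\Gamma$.

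\textbf{Step 2: contraction estimate.} Take two flows $\nu,\bar\nu$ and drive both ODEs with the same initial datum $\xi$. Then
\[
|X_t-\bar X_t|\le \int_0^t\bigl(C_V|X_s-\bar X_s|+C_VW_2(\nu_s,\bar\nu_s)\bigr)ds.
\]
Squaring, taking expectations and applying Grönwall yields
\[
W_2^2(\Gamma(\nu)_t,\Gamma(\bar\nu)_t)\le \E|X_t-\bar X_t|^2\le C\int_0^tW_2^2(\nu_s,\bar\nu_s)\,ds.
\]
On $C([0,T];\prob_2)$ we use the weighted metric $d_\lambda(\nu,\bar\nu)=\sup_te^{-\lambda t}W_2(\nu_t,\bar\nu_t)$. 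For $\lambda$ large this makes $\Gamma$ a strict contraction; equivalently, some iterate $\Gamma^k$ is a contraction in the unweighted sup metric.

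\textbf{Step 3: conclusion.} The space $C([0,T];(\prob_2,W_2))$ is complete, since $(\prob_2,W_2)$ is complete. Banach's fixed point theorem therefore gives a unique fixed point, which is a distributional solution. For uniqueness, any distributional solution $m$ is, by Step 1, the law of the flow driven by $b_m$, so $m=\Gamma(m)$ and the contraction forces equality. Note that the $L^q$ moments with $q>2$ are not needed for the contraction itself. They are kept so that the solution stays in $\prob_q$, which is relevant for the later Fournier--Guillin applications.

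\textbf{Main obstacle.} The only delicate point is the uniqueness of distributional solutions to the linear continuity equation in the class $C([0,T];\prob_2)$ with a merely continuous-in-time, Lipschitz-in-space field of linear growth. I would cite the superposition principle (Ambrosio--Gigli--Savaré, Prop. 8.1.8). Its integrability requirement $\int_0^T\!\int|b_\nu|^2dm_tdt<\infty$ holds by the linear growth together with the second moments, and uniqueness of the characteristics then identifies $m$ with $\Gamma(\nu)$.
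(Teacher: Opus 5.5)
Your proof is correct, but it follows a different route from the paper's. The paper applies the Schauder fixed point theorem to the same frozen-field map $S(\widetilde m)=m$ on $C([0,T];(\prob_2(\Rd),W_2))$. Relative compactness of the range comes from two ingredients: a uniform $q$-th moment bound, which is where $q>2$ enters, since it gives uniform integrability of $|x|^2$ and hence $W_2$-precompactness; and the equicontinuity estimate $W_2^2(m_t,m_s)\le C|t-s|$. Continuity of $S$ comes from SDE stability. Uniqueness is then imported separately from \cite[Theorem 3.3]{GangboSwiech2014}.

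You instead use the $W_2$-Lipschitz dependence of $V$ on the measure directly. The synchronous-coupling bound $W_2^2(\Gamma(\nu)_t,\Gamma(\bar\nu)_t)\le C\int_0^t W_2^2(\nu_s,\bar\nu_s)\,ds$ makes $\Gamma$ a strict contraction for the weighted metric $d_\lambda$ once $\lambda>C/2$. Uniqueness then reduces to uniqueness for the linear equation with the frozen Lipschitz field, which you correctly obtain from the superposition principle: its local-Lipschitz and integrability hypotheses hold by linear growth and the bounded second moments of any $m\in C([0,T];\prob_2(\Rd))$.

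Your route is more elementary and self-contained. It gives existence and uniqueness in one stroke via a constructive Picard scheme. It also shows that $m_0\in\prob_2(\Rd)$ already suffices, so $q>2$ is an artefact of the compactness step; this is consistent with the paper's remark that its MFG well-posedness extends to $q=2$. The Schauder route, by contrast, uses the Lipschitz bound in the measure variable only for uniqueness. Its existence part would survive if $V$ were merely continuous in the measure, and that robustness is what the compactness, and hence the extra moment, pays for.
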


\begin{proof}[Sketch of proof]
 The proof follows the same ideas as in the fixed point scheme in the proof of \cite[Lemma 3.6]{alparmouMFGunderDmonotone}, we only indicate where changes are needed. 
    
    \medskip
    
    We define the mapping $S$ therein on the space $C([0,T];(\prob_2(\Rd),W_2))$, for ease to the reader we reproduce here the definition of $S$, for $\widetilde{m}\in C([0,T];(\prob_2(\Rd),W_2))$, we define $S(\widetilde{m})=m$ where $m$ is the unique distributional solution to the linear continuity equation
    \[\p_tm_t+\na\cdot(m_tV(t,x,\widetilde{m}_t))=0\]
    with initial condition $m_0$.

    \medskip

    The range of $S$ is relatively compact with respect to the topology of uniform convergence in $ C([0,T];(\prob_2(\Rd),W_2))$ due to the following moment estimate, which can be derived with similar computation as in \cite[Claim 1]{alparmouMFGunderDmonotone},
    \[\int_\Rd |x|^q dm_t\leq C\left(1+\int_\Rd |x|^q dm_0\right)\]
    for some constant $C>0$ depending only on $C_V$ and $T$ and the following equicontinuity estimate
    \begin{align*}
       W_2^2(m_t,m_s)\leq C|t-s|
    \end{align*}
    for some constant $C>0$ depending only on $C_V, T$ and $\int_\Rd |x|^q dm_0$, which can be derived by  the SDE representation (not to be confused with the flow map denoted by $X(t,\cdot)$ used in \cite{alparmouMFGunderDmonotone})
    \[X_t=\xi+\int^t_0 V(s,X_s,\widetilde{m}_s)ds,\ \Leb(\xi)=m_0,\ \Leb(X_t)=m_t.\]
    The continuity of $S$ can be inferred from classical SDE stability \cite[Theorem 3.2.4]{Zhang2017}, we hence deduce the existence of solutions by Schauder fixed point theorem. The uniqueness follows from \cite[Theorem 3.3]{GangboSwiech2014}.
\end{proof}


\bibliographystyle{abbrv}
\bibliography{bibliography}

\end{document}